\numberwithin{equation}{section}
\titleformat*{\section}{\Large \scshape\center}
\titleformat*{\subsection}{\fontsize{14}{14} \sffamily}
\theoremstyle{plain}
\newtheorem{theorem}{Theorem}[section]
\newtheorem*{theorem*}{Theorem}
\newtheorem{lemma}[theorem]{Lemma}
\newtheorem{proposition}[theorem]{Proposition}
\newtheorem{thm*}{Theorem}
\newtheorem{prop*}{Proposition}
\newtheorem{corollary}[theorem]{Corollary}
\theoremstyle{definition}
\newtheorem{definition}[theorem]{Definition}
\newtheorem*{definition*}{Definition}
\newtheorem{assumption}{Assumption}
\theoremstyle{remark}
\newtheorem{remark}[theorem]{Remark}
\newcommand{\BUC}{\operatorname{BUC}}
\DeclareMathOperator{\tr}{tr}
\newcommand{\C}{\mathbb{C}}
\newcommand{\N}{\mathbb{N}}
\newcommand{\R}{\mathbb{R}}
\newcommand{\Z}{\mathbb{Z}}
\newcommand{\1}{\mathbbm{1}}
\newcommand{\Cc}{\mathcal{C}}
\newcommand{\Hc}{\mathcal{H}}
\newcommand{\Jc}{\mathcal{J}}
\newcommand{\Kc}{\mathcal{K}}
\newcommand{\Lc}{\mathcal{L}}
\newcommand{\Wc}{\mathcal{W}}
\DeclareMathOperator{\BO}{BO}
\DeclareMathOperator{\BDO}{BDO}
\DeclareMathOperator{\supp}{supp}
\DeclareMathOperator{\Co}{Co}
\DeclareMathOperator{\spec}{spec}
\DeclareMathOperator{\ess}{ess}
\newcommand{\from}{\colon}
\newcommand{\scpr}[2]{\left\langle #1, #2 \right\rangle}
\renewcommand{\sp}{\scpr}
\newcommand{\abs}[1]{\left\lvert#1\right\rvert}
\newcommand{\norm}[1]{\left\lVert#1\right\rVert}
\newcommand{\set}[1]{\left\{ #1\right\}}
\newcommand{\vertiii}[1]{{\left\vert\kern-0.25ex\left\vert\kern-0.25ex\left\vert #1
    \right\vert\kern-0.25ex\right\vert\kern-0.25ex\right\vert}}
\renewcommand{\epsilon}{\varepsilon}
\begin{document}
\pagenumbering{gobble}
\title{Fredholm operators on abelian phase spaces}
\author{Robert Fulsche and Raffael Hagger}
\date{}
\maketitle
\pagenumbering{arabic}

\begin{abstract}
We study the Fredholm property for linear operators on coorbit spaces over locally compact abelian phase spaces. As a special case we consider operators on $L^2(G)$, where $G$ is an arbitrary locally compact abelian group. Our approach therefore extends the existing theory for discrete spaces to the continuous setting and complements the study in \cite{Fulsche_Hagger2025}, where compactness was characterized in terms of limit operators. Our results are again achieved by merging tools from the theory of band-dominated operators with methods of quantum harmonic analysis, thereby achieving new results in both areas. We emphasize that our results are new (and maybe most interesting) even in the $L^2$-setting.
\end{abstract}

\medskip
\textbf{AMS subject classification:} Primary: 47B90; Secondary: 47A53, 47L80, 22D10, 43A70, 47L10, 47B07

\medskip
\textbf{Keywords:} Fredholm, abelian phase spaces, locally compact abelian groups, compactness, quantum harmonic analysis, band-dominated operators

\section{Introduction}

Operator theory on function spaces over topological groups is a topic of everlasting interest as it reveals connections between the analytical, geometrical and algebraical properties of the underlying structures. Some of the first operators to study are the compact operators. As a rule of thumb, a compact operator vanishes at the boundary of the domain in a sense that can be made precise with the help of limit operators. The converse is only true if one assumes the membership in an appropriate algebra as well as some geometric properties of the domain close to the boundary \cite{Fulsche_Hagger2025}. The next step is to study Fredholm operators, that is, those operators that are invertible modulo compact operators. The expected result is that Fredholm operators are invertible at the boundary, in analogy to what happens for compact operators. This is true most of the time, but usually turns out to be much harder to prove.

Let us briefly recall a few results in this direction. The most classical example is the $\ell^p$-sequence space over $\Z$. In this case every operator can be depicted as an infinite matrix and one can show quite easily that an operator is compact if and only if it is band-dominated (that is, can be approximated by band matrices) and its diagonals vanish at infinity. The corresponding characterization of Fredholm operators reads as follows: A band-dominated operator on $\ell^p(\Z)$ is Fredholm if and only if all of its limit operators are invertible. Again, these limit operators are a way to describe how an operator behaves at infinity. The latter result has been shown by Lindner and Seidel \cite{Lindner_Seidel} only in 2014, hinting at the fact that this problem is much harder than the compactness result. Only three years later, this characterization of Fredholm operators was generalized to discrete metric spaces with certain properties involving the growth towards its boundary, revealing the geometric nature of the result \cite{Spakula_Willett}.  More recently, the theory of limit operators has also been expanded in a slightly different direction, for example to Bergman \cite{Hagger2017,Hagger2019} and Fock spaces \cite{Fulsche_Hagger2019}. Again, the algebra of band-dominated operators appears quite naturally in the form of the Toeplitz algebra \cite{hagger2021}. As the latest development in this area, it was demonstrated in \cite{HaggerSeifert} that this toolkit can be used in a rather general setting of metric measure spaces, generalizing most of the previously known results. 

One could think that this would be the end of the story. In many situations, such as locally compact abelian (lca) groups, the tools from \cite{HaggerSeifert} apply to obtain characterizations of compactness and Fredholm properties for linear operators on function spaces over the underlying group. Unfortunately, there is a major flaw in the picture here: The method in \cite{HaggerSeifert} assumes that certain projections are locally compact. If the underlying space is not discrete, this restricts the applicability to ``nice'' subspaces such as the Bergman and Fock spaces mentioned above. It does not, however, characterize the Fredholm operators on $L^2(\R)$. Moreover, most of the previous work assumes that the underlying space is properly metrizable, ruling out any lca group which is not second countable. In particular, this excludes uncountable discrete spaces.

The main problem on non-discrete spaces is that the behavior at the boundary is not sufficient to decide whether an operator is Fredholm. For instance consider any non-zero function $f \from \R \to \R$ with compact support. Then the multiplication operator $M_{1-f}$ is not Fredholm. However, at infinity, the operator looks just like the identity operator. One therefore has to either restrict the algebra in which we characterize Fredholm operators or reinterpret what we mean by limit operators. In the Hilbert spaces case the former approach was pursued by Georgescu and Iftimovici \cite{Georgescu2011,GeorgescuIftimovici2006}, for instance. They considered the \emph{elliptic algebra}, which is defined as the norm closure of all band operators that are also integral operators with uniformly continuous kernel. In particular, it was shown in \cite[Theorem 1.1]{GeorgescuIftimovici2006} (see also \cite[Theorem 2.5]{Georgescu2011}) that an operator in the elliptic algebra is Fredholm if and only if all limit operators are invertible and their inverses are uniformly bounded. Note that the elliptic algebra is much smaller than the algebra of band-dominated operators for non-discrete spaces (cf. \cite[Section 7]{Georgescu2011}). It is also smaller than the algebra considered in \cite{Fulsche_Hagger2025}, where a characterization of compact operators that resembles the one for sequence spaces was obtained. This algebra, which we will call $\Cc_1^p$ below, appears very naturally in the context of quantum harmonic analysis and will again be our main object of study. Our approach is to combine the methods of band-dominated operators discussed in \cite{HaggerSeifert} with tools from \emph{quantum harmonic analysis}, which themselves have been proven to be very useful in operator theory in the last few years; see \cite{Dewage_2026,Fulsche2020,Fulsche_Galke2023,Fulsche_Hagger2024,Fulsche_Luef_Werner2024,Halvdansson2023,Luef_Skrettingland2021} just to name a few recent publications.

Let us now turn to the scope of the present work. Our setting is that of an \emph{abelian phase space}, by which we mean a locally compact abelian group $\Xi$ endowed with a multiplier $m$ that induces a certain symplectic self-duality of the group $\Xi$ with its Pontryagin dual $\widehat{\Xi}$. This setup is described more precisely in the main body of the text. The leading example of such a phase space is the situation where $G$ is any locally compact abelian group, $\Xi = G \times \widehat{G}$ and $m((x, \xi), (y, \eta)) = \overline{\xi(y)}$. Nevertheless, it has been shown that there are indeed other more exotic examples of groups that fall into this framework \cite{Prasad_Shapiro_Vemuri2010}. Within this setting of an abelian phase space, the results of Baggett and Kleppner \cite{Baggett_Kleppner1973} (extending the classical Stone-von Neumann theorem) show that there exists a unique irreducible projective representation $U$ of $\Xi$ with multiplier $m$. The spaces we consider are then exactly the $p$-Coorbit spaces $\Co_p(U)$ of this representation. For simplicity, we restrict ourselves to the reflexive situation $1 < p < \infty$ in this paper. In the standard setting $\Xi = G \times \widehat{G}$, these Coorbit spaces precisely agree with the modulation spaces $M^p(G)$. In particular, we get $\Co_p(U) = L^2(G)$ for $p = 2$.

A key object in our studies is the algebra $\mathcal C_1^p$, consisting of bounded operators $A$ on the coorbit space for which the map $\Xi \ni x \mapsto U_x A U_x^{-1} =: \alpha_x(A)$ is continuous in operator norm topology. In this case the map $x \to \alpha_x(A)$ can be extended to a strongly continuous map on a certain compactification $\sigma\Xi$ of $\Xi$. Its boundary $\sigma\Xi \setminus \Xi$ will be denoted by $\partial\Xi$. With these notations we can now spell out our main result.

\begin{theorem*} Let $(\Xi, m)$ be an abelian phase space, $1 < p < \infty$ and $A \in \Cc_1^p$. Then $A$ is Fredholm if and only if $\alpha_x(A)$ is invertible for every $x \in \partial\Xi$. In particular, the essential spectrum of $A$ is given by
\[\spec_{\ess}(A) = \bigcup\limits_{x \in \partial\Xi} \spec(\alpha_x(A)).\]
\end{theorem*}

The paper is structured as follows: In Section \ref{sec:coorbit} we recall some basic notation, i.e., the details on the locally compact abelian group $\Xi$ and the multiplier $m$, together with the respective notions on the Coorbit spaces. Section \ref{sec:limit_ops} will explain the notion of limit operators, yielding a rigorous notion of the behavior of the map $x \mapsto \alpha_x(A)$ at infinity. In Section \ref{sec:band_dominated} we introduce and discuss our notion of band-dominated operators on abelian phase spaces. This continues into Section \ref{sec:norm_localization}, where the operator norm localization property of band-dominated operators is discussed. Finally, in Section \ref{sec:Fredholm}, we combine the tools from band-dominated operator theory with the methods from quantum harmonic analysis to obtain our results on the Fredholm property.

\section{Coorbit spaces}\label{sec:coorbit}

The general setup of the present work is that of coorbit spaces over abelian phase spaces. The theory of coorbit spaces was initially developed in the 80s by H.~G.~Feichtinger and K.~Gröchenig, cf.\ \cite{Feichtinger_Groechenig1989a, Feichtinger_Groechenig1989b}; see also \cite{Christensen1996} for a detailed discussion in the context of projective unitary representations. For this paper, we will use the notation and conventions introduced in \cite[Section 2]{Fulsche_Hagger2025} and add a few simple results in this section. Let us first recall the basic setting, which is identical to the setting presented in Assumptions 1 and 2 of \cite[Section 2]{Fulsche_Hagger2025}.

\begin{assumption} \label{ass_1}
    We assume that $(\Xi, m)$ is an abelian phase space, i.e., $\Xi$ is a locally compact abelian (lca) group with identity $e$ and $m \from \Xi \times \Xi \to S^1$ is a separately continuous 2-cocycle on $\Xi$ that satisfies
    \[m(xy,z)m(x,y) = m(x,yz)m(y,z)\]
    for $x,y,z \in \Xi$, Moreover, we assume that $x \mapsto \sigma(x,\cdot) := m(x,\cdot)/m(\cdot,x)$ is a topological group isomorphism from $\Xi$ to $\widehat{\Xi}$, $m(x,y) = m(x^{-1}, y^{-1})$ for all $x, y\in \Xi$ and $m(e,e) = 1$.
\end{assumption}

Note that the latter assumption implies $m(e,x) = 1$ for all $x \in \Xi$. Let $(\mathcal H, (U_x)_{x \in \Xi})$ be the unique (up to unitary equivalence) irreducible projective unitary representation with $m$ as a cocycle, which means that we have
\[U_xU_y = m(x,y)U_{xy}\]
for $x,y \in \Xi$. The assumption $m(x,y) = m(x^{-1},y^{-1})$ for $x,y \in \Xi$ ensures that there exists a unitary operator $R \in \mathcal U(\mathcal H)$ satisfying $RU_x = U_{x^{-1}}R$. $R$ can be chosen to be self-adjoint (cf.\ \cite[Section 2]{Fulsche_Galke2023}). We also note that $U_x^\ast = \overline{m(x,x^{-1})}U_{x^{-1}}$ for all $x \in \Xi$.
    
\begin{assumption} \label{ass_2}
    We assume that $(U_x)_{x \in \Xi}$ is integrable. That is, the set
    \[\mathcal H_1 := \{ f \in \mathcal H: ~x \mapsto \langle f, U_x f\rangle \in L^1(\Xi)\}\]
    is assumed to be nontrivial.
\end{assumption}

Here, $L^1(\Xi)$ denotes the usual Lebesgue space of equivalence classes of integrable functions with respect to the Haar measure $\lambda$ on $\Xi$. In integrals we will frequently write $\mathrm{d}x$ instead of $\mathrm{d}\lambda(x)$.

The set $\Hc_1$ turns out to be a dense subspace of $\Hc$. Fix $\varphi_0 \in \mathcal H_1$ with $\norm{\varphi_0}_{\Hc} = 1$. For $f \in \mathcal H$ and $x \in \Xi$ we will write
\[\mathcal W_{\varphi_0}(f)(x) := \langle f, U_x \varphi_0\rangle\]
for the \emph{Wavelet transform} of $f$ with window $\varphi_0$. It is well known that $\Wc_{\varphi_0} \from \Hc \to L^2(\Xi)$ is an isometry that can be extended to the coorbit spaces $\Co_p(U)$, $p \in [1,\infty]$, which are then endowed with the norm
\[\| f\|_{p, \varphi_0} := \| \mathcal W_{\varphi_0}(f)\|_{L^p(\Xi)}.\]
We will abbreviate $W_{\varphi_0}^p := \mathcal W_{\varphi_0}(\Co_p(U)) \subseteq L^p(\Xi)$ and recall that this is a closed subspace of $L^p(\Xi)$ consisting of bounded continuous functions. The orthogonal projection $P_{\varphi_0} := \Wc_{\varphi_0}\Wc_{\varphi_0}^*$ onto $W_{\varphi_0}^2$ is given by the twisted convolution
\[P_{\varphi_0}f = f \ast' \mathcal W_{\varphi_0}(\varphi_0),\]
where
\[(f \ast' g)(x) = \int_\Xi f(y) g(y^{-1}x)m(y,y^{-1}x) \, \mathrm{d}y.\]

\begin{lemma}\label{lemma:projection_wiener_algebra}
    There exists an $H \in L^1(\Xi)$ such that the integral kernel $k(x,y)$ of $P_{\varphi_0}$ satisfies
    \begin{align*}
        |k(x,y)| \leq H(y^{-1}x)
    \end{align*}
    for all $x,y \in \Xi$. In particular, $P_{\varphi_0}$ acts as a continuous projection from $L^p(\Xi)$ to $W_{\varphi_0}^p$ for every $p \in [1,\infty]$.
\end{lemma}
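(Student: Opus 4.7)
The plan is to read the integral kernel of $P_{\varphi_0}$ directly off the twisted-convolution formula and then exploit that $m$ takes values in $S^1$. Unrolling the definition,
\[
P_{\varphi_0}f(x) = (f *' \Wc_{\varphi_0}(\varphi_0))(x) = \int_\Xi f(y)\, \Wc_{\varphi_0}(\varphi_0)(y^{-1}x)\, m(y,y^{-1}x)\, dy,
\]
so the kernel is $k(x,y) = \Wc_{\varphi_0}(\varphi_0)(y^{-1}x)\, m(y,y^{-1}x)$. Because $|m|\equiv 1$, we get $|k(x,y)| = |\Wc_{\varphi_0}(\varphi_0)(y^{-1}x)|$. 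I would therefore set $H(z) := |\Wc_{\varphi_0}(\varphi_0)(z)| = |\langle \varphi_0, U_z\varphi_0\rangle|$, which is in $L^1(\Xi)$ precisely by the choice $\varphi_0 \in \Hc_1$ (Assumption \ref{ass_2}). This already furnishes the pointwise estimate claimed in the lemma.

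For the $L^p$-continuity, I would apply Young's inequality: the domination $|P_{\varphi_0}f(x)| \leq (|f| * H)(x)$ (with $*$ the ordinary group convolution on $\Xi$) immediately yields $\|P_{\varphi_0}f\|_{L^p} \leq \|H\|_{L^1}\,\|f\|_{L^p}$ for every $p \in [1,\infty]$. That the image lies in $W_{\varphi_0}^p$ and that $P_{\varphi_0}$ restricts to the identity there can be read off from the identity $P_{\varphi_0}^2 = P_{\varphi_0}$ which already holds on $L^2$: it translates into the pointwise kernel relation $\int_\Xi k(x,z)k(z,y)\,dz = k(x,y)$, and this relation transfers to $L^p$ since Fubini applies thanks to the $L^1$-dominating function $H$.

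I do not foresee any genuine obstacle: the essential observation is that in absolute value the cocycle $m$ disappears, reducing the kernel of a twisted convolution to a genuine convolution kernel. Everything beyond that is routine application of Young's inequality and Fubini, combined with the basic coorbit-space identification $P_{\varphi_0} = \Wc_{\varphi_0}\Wc_{\varphi_0}^*$.
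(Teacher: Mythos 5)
Your proposal is correct and follows essentially the same route as the paper: read the kernel $k(x,y)=\Wc_{\varphi_0}\varphi_0(y^{-1}x)\,m(y,y^{-1}x)$ off the twisted convolution, note $|m|\equiv 1$ so that $H=|\Wc_{\varphi_0}\varphi_0|\in L^1(\Xi)$ dominates the kernel by Assumption \ref{ass_2}, and conclude $L^p$-boundedness by Young's inequality. The extra remarks on idempotency are consistent with (though not spelled out in) the paper's proof.
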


\begin{proof}
As the integral kernel of $P_{\varphi_0}$ is given by
\[k(x,y) = \mathcal W_{\varphi_0}\varphi_0(y^{-1}x) m(y,y^{-1}x)\]
and $\Wc_{\varphi_0}\varphi_0 \in L^1(\Xi)$, the first part of the lemma is clear and the second part follows from Young's inequality.
\end{proof}

Define
\[V_x f(y) := \frac{m(x,x^{-1})}{m(x^{-1},y)}f(x^{-1}y)\]
for functions $f \from \Xi \to \mathbb C$, $x,y \in \Xi$. The Wavelet transform intertwines between $V_x$ and $U_x$, that is, $\Wc_{\varphi_0}U_x = V_x\Wc_{\varphi_0}$ for all $x \in \Xi$. We note that $(V_x)_{x \in \Xi}$ is a projective unitary representation on $L^2(\Xi)$ that is continuous in the strong operator topology and satisfies $V_xV_y = m(x,y)V_{xy}$ for all $x,y \in \Xi$. Moreover, $V_x \from L^p(\Xi) \to L^p(\Xi)$ is a surjective isometry for every $p \in [1,\infty]$ and
\[V_x^{-1} = \overline{m(x,x^{-1})}V_{x^{-1}}.\]

\begin{corollary} \label{cor:reproducing_formula}
    For $g \in W_{\varphi_0}^p$ and $x \in \Xi$ we have the reproducing formula
    \[g(x) = \sp{g}{\Wc_{\varphi_0}U_x\varphi_0} = \sp{g}{V_x\Wc_{\varphi_0}\varphi_0}.\]
\end{corollary}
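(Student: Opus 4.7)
The second equality is immediate: apply the intertwining property $\Wc_{\varphi_0}U_x = V_x\Wc_{\varphi_0}$, stated just before the corollary, to the vector $\varphi_0$.

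For the first equality, my strategy is to verify it on a dense subset of $W_{\varphi_0}^p$ using the $\Hc$-isometry of $\Wc_{\varphi_0}$, and then extend by continuity. Pick $f \in \Hc_1 \subseteq \Hc$ and set $g := \Wc_{\varphi_0}f$. By the very definition of the wavelet transform, $g(x) = \langle f, U_x\varphi_0\rangle_\Hc$, and since $\Wc_{\varphi_0}$ is an isometry from $\Hc$ into $L^2(\Xi)$ and $U_x\varphi_0 \in \Hc$, this rewrites as
\[g(x) = \langle \Wc_{\varphi_0}f, \Wc_{\varphi_0}U_x\varphi_0\rangle_{L^2} = \langle g, \Wc_{\varphi_0}U_x\varphi_0\rangle,\]
which establishes the reproducing formula on $\Wc_{\varphi_0}(\Hc_1)$.

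To extend to all of $W_{\varphi_0}^p$, I would check that both sides are continuous as linear functionals of $g \in W_{\varphi_0}^p$ topologized as a subspace of $L^p(\Xi)$. The right-hand side is the pairing $\int g\,\overline{\Wc_{\varphi_0}U_x\varphi_0}$, and the test function $\Wc_{\varphi_0}U_x\varphi_0 = V_x\Wc_{\varphi_0}\varphi_0$ lies in $L^1(\Xi) \cap L^\infty(\Xi)$: indeed $\Wc_{\varphi_0}\varphi_0$ is in $L^1$ by Assumption \ref{ass_2} and is bounded by $\norm{\varphi_0}^2 = 1$, while $V_x$ is an isometry on every $L^p$. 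Hence it lies in $L^{p'}(\Xi)$ and the pairing is continuous in $g$. The left-hand side $g \mapsto g(x)$ is continuous because Lemma \ref{lemma:projection_wiener_algebra} yields $g = P_{\varphi_0}g$ and thus $g(x) = \int k(x, y) g(y)\,dy$ with $|k(x,y)| \leq H(y^{-1}x)$ for some $H \in L^1(\Xi)$; since we may additionally take $H$ to be bounded ($\Wc_{\varphi_0}\varphi_0$ itself is bounded), $k(x, \cdot) \in L^{p'}(\Xi)$, and Hölder's inequality gives the required continuity. Combined with the density of $\Wc_{\varphi_0}(\Hc_1)$ in $W_{\varphi_0}^p$, which follows from the density of $\Hc_1$ in $\Co_p(U)$ (a standard fact from coorbit theory), the formula extends to all of $W_{\varphi_0}^p$.

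The step I expect to be the main obstacle is the density of $\Hc_1$ in $\Co_p(U)$ (for $p > 2$ especially), which should be invoked from the coorbit literature rather than re-proven here. If that route proves awkward, a direct alternative is to verify the kernel identity
\[\overline{V_x\Wc_{\varphi_0}\varphi_0(y)} = \Wc_{\varphi_0}\varphi_0(y^{-1}x)\,m(y, y^{-1}x) = k(x,y)\]
by bookkeeping with the 2-cocycle relation, the symmetry $m(x,y) = m(x^{-1}, y^{-1})$, and the relation $\Wc_{\varphi_0}\varphi_0(z^{-1}) = m(z,z^{-1})\,\overline{\Wc_{\varphi_0}\varphi_0(z)}$ that follows from $U_z^* = \overline{m(z,z^{-1})}U_{z^{-1}}$. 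Once this identity is in hand, the representation $g = P_{\varphi_0}g$ from Lemma \ref{lemma:projection_wiener_algebra} immediately gives $g(x) = \int g(y)\,\overline{V_x\Wc_{\varphi_0}\varphi_0(y)}\,dy = \langle g, V_x\Wc_{\varphi_0}\varphi_0\rangle$ on all of $W_{\varphi_0}^p$, bypassing any density argument.
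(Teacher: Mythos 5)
Your proof is correct, but your primary route differs from the paper's. The paper offers no explicit argument and treats the formula as an immediate consequence of the preceding material: for $g \in W_{\varphi_0}^p$ one has $g = P_{\varphi_0}g = g \ast' \Wc_{\varphi_0}\varphi_0$, and the kernel computed in Lemma \ref{lemma:projection_wiener_algebra} satisfies $k(x,y) = \Wc_{\varphi_0}\varphi_0(y^{-1}x)m(y,y^{-1}x) = \langle U_y\varphi_0, U_x\varphi_0\rangle = \overline{\Wc_{\varphi_0}U_x\varphi_0(y)}$ (this identity is exactly the cocycle manipulation carried out inside the proof of the subsequent Toeplitz lemma), whence $g(x) = \int g(y)\overline{\Wc_{\varphi_0}U_x\varphi_0(y)}\,\mathrm{d}y$. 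That is precisely the ``direct alternative'' you sketch at the end, and it is the shortest path. Your main argument — verify the identity on $\Wc_{\varphi_0}(\Hc_1)$ via the $L^2$-isometry and extend using that both sides are $L^p$-continuous functionals (the test functions $V_x\Wc_{\varphi_0}\varphi_0$ and $k(x,\cdot)$ lying in $L^1 \cap L^\infty \subseteq L^{p'}$) — is also valid and has the virtue of avoiding all cocycle bookkeeping, at the cost of invoking the density of $\Co_1(U)$ in $\Co_p(U)$, which the paper does use freely elsewhere (Section \ref{sec:limit_ops}).

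One small caveat in the sketched alternative: from $U_z^\ast = \overline{m(z,z^{-1})}U_{z^{-1}}$ one gets $\Wc_{\varphi_0}\varphi_0(z^{-1}) = \overline{m(z,z^{-1})}\,\overline{\Wc_{\varphi_0}\varphi_0(z)}$, not $m(z,z^{-1})\,\overline{\Wc_{\varphi_0}\varphi_0(z)}$ as you wrote. The target kernel identity $\overline{V_x\Wc_{\varphi_0}\varphi_0(y)} = k(x,y)$ is nevertheless correct; the cleanest way to see it is via the intertwining relation, $\overline{V_x\Wc_{\varphi_0}\varphi_0(y)} = \overline{\langle U_x\varphi_0, U_y\varphi_0\rangle} = \langle U_y\varphi_0, U_x\varphi_0\rangle$, combined with the cocycle identity $m(y,y^{-1}x)m(y^{-1},x) = m(y,y^{-1})$ already used in the paper.
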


Next, we show that $P_{\varphi_0}$ commutes with the representation $(V_x)_{x \in \Xi}$.

\begin{lemma} \label{lem:commutation_properties_V_z}
    For $x \in \Xi$ we have $V_x P_{\varphi_0} = P_{\varphi_0}V_x$. Moreover, $V_xM_f = M_{f(x^{-1}\cdot)}V_x$ for all $f \in L^{\infty}(\Xi)$.
\end{lemma}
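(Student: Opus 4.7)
The plan is to deduce the first identity from the intertwining property of the wavelet transform together with the factorization $P_{\varphi_0} = \Wc_{\varphi_0}\Wc_{\varphi_0}^\ast$, and then to bootstrap the Hilbert space identity to all $L^p(\Xi)$. The second identity I would verify by simply unwinding the definitions.

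For part (1), I would first work on $L^2(\Xi)$. Taking the adjoint of the intertwining relation $\Wc_{\varphi_0}U_x = V_x\Wc_{\varphi_0}$ and exploiting that both $V_x$ and $U_x$ are unitary on their respective Hilbert spaces (so that the cocycle phases in $U_x^\ast = \overline{m(x,x^{-1})}U_{x^{-1}}$ and $V_x^{-1} = \overline{m(x,x^{-1})}V_{x^{-1}}$ cancel) gives the dual intertwining $\Wc_{\varphi_0}^\ast V_x = U_x\Wc_{\varphi_0}^\ast$. Inserting this into $P_{\varphi_0} = \Wc_{\varphi_0}\Wc_{\varphi_0}^\ast$ yields
\[
P_{\varphi_0} V_x = \Wc_{\varphi_0} U_x\Wc_{\varphi_0}^\ast = V_x\Wc_{\varphi_0}\Wc_{\varphi_0}^\ast = V_x P_{\varphi_0}
\]
on $L^2(\Xi)$. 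To upgrade this to arbitrary $p \in [1,\infty]$, I would observe that both $P_{\varphi_0}$ and $V_x$ are continuous on $L^p(\Xi)$ (by Lemma \ref{lemma:projection_wiener_algebra} and the definition of $V_x$ respectively), so the equality passes from the dense subspace $C_c(\Xi) \subseteq L^2(\Xi) \cap L^p(\Xi)$ to all of $L^p(\Xi)$ for $p < \infty$. The case $p = \infty$ can be handled by duality from $p = 1$, or, uniformly in $p$, by running the argument directly at the level of the integral kernel $k(y,z) = \Wc_{\varphi_0}\varphi_0(z^{-1}y)\, m(z,z^{-1}y)$ from Lemma \ref{lemma:projection_wiener_algebra}; after a change of variables this reduces the claim to the cocycle identity
\[
m(w,w^{-1}x^{-1}y)\, m(x^{-1},xw) = m(xw,w^{-1}x^{-1}y)\, m(x^{-1},y),
\]
which in turn follows from two applications of the 2-cocycle relation together with the symmetry $m(x,x^{-1}) = m(x^{-1},x)$, which is immediate from the assumption $m(x,y) = m(x^{-1},y^{-1})$ in Assumption \ref{ass_1}.

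Part (2) is an immediate pointwise computation: for $g \from \Xi \to \C$ and $y \in \Xi$, both $(V_x M_f g)(y)$ and $(M_{f(x^{-1}\cdot)} V_x g)(y)$ unwind to $\tfrac{m(x,x^{-1})}{m(x^{-1},y)}\, f(x^{-1}y)\, g(x^{-1}y)$. The only real obstacle in the lemma is therefore the careful tracking of the cocycle phases in part (1); once the wavelet transform is trusted as an isometric intertwiner between $(U_x)_{x \in \Xi}$ and $(V_x)_{x \in \Xi}$, the operator-theoretic content is transparent.
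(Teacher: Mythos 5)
Your proof is correct and follows essentially the same route as the paper: the identity $V_xP_{\varphi_0}=P_{\varphi_0}V_x$ is obtained on $L^2(\Xi)$ from the factorization $P_{\varphi_0}=\Wc_{\varphi_0}\Wc_{\varphi_0}^\ast$ together with the intertwining relation and its adjoint (the cocycle phases cancelling exactly as you note), then extended to $1\leq p<\infty$ by density and to $p=\infty$ by duality, and part (2) is the same one-line pointwise computation. The additional kernel-level verification you sketch is not needed but is consistent with the paper's argument.
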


\begin{proof}
For $p = 2$ we have $P_{\varphi_0} = \mathcal W_{\varphi_0} \mathcal W_{\varphi_0}^\ast$ and hence
\begin{align*}
    V_x P_{\varphi_0} = V_x \mathcal W_{\varphi_0} \mathcal W_{\varphi_0}^\ast = \mathcal W_{\varphi_0} U_x  \mathcal W_{\varphi_0}^\ast = \mathcal W_{\varphi_0}\mathcal W_{\varphi_0}^\ast  V_x.
\end{align*}
For $1 \leq p < \infty$ the equality now follows by density. For $p = \infty$ the statement follows by duality.

The second statement is immediate:
\[V_zM_fg(x) = \frac{m(z,z^{-1})}{m(z^{-1},x)}f(z^{-1}x)g(z^{-1}x) = M_{f(z^{-1}\cdot)}V_zg(x).\qedhere\]

\end{proof}

We define Toeplitz operators on $W_{\varphi_0}^p$ as follows: Given a symbol $f \in L^\infty(\Xi)$, the operator $T_{f, \varphi_0}$ acts on $W_{\varphi_0}^p$ as $T_{f, \varphi_0}(g) := P_{\varphi_0}(fg)$. On the other hand, we can also write a Toeplitz operator $T_{f, \varphi_0}$ as a convolution of the symbol $f$ with a rank one operator. Recall that the convolution between a nuclear operator $A \from W_{\varphi_0}^p \to W_{\varphi_0}^p$ and $f \in L^1(\Xi)$ is defined by the Bochner integral
\[f \ast A := A \ast f := \int_{\Xi} f(x)\alpha_x(A) \, \mathrm{d}x,\]
where $\alpha_x(A) := V_xAV_x^{-1}$. This can be extended to the case where $f \in L^\infty(\Xi)$ and $A \in W_{\varphi_0}^1 \hat{\otimes}_{\pi} W_{\varphi_0}^1$, at least when $1 < p < \infty$, which we assume in the following. Further, for $A \in \mathcal N(W_{\varphi_0}^p)$ (the nuclear operators on $W_{\varphi_0}^p$) and $B \in \mathcal L(W_{\varphi_0}^p)$ (with $1 < p < \infty$) we can define
\begin{align*}
    A \ast B(x) = \operatorname{tr}(A \alpha_x \beta_-(B)), \quad x \in \Xi.
\end{align*}
Here, $\operatorname{tr}$ denotes the nuclear trace and we also use the notation $\beta_-(B) = RBR$ (with $R$ being the parity operator). When $A$ is nuclear and $B$ is bounded, then $A \ast B$ is a bounded and continuous function on $\Xi$. When both $A$ and $B$ are contained in $W_{\varphi_0}^1 \hat{\otimes}_{\pi} W_{\varphi_0}^1$, then $A \ast B$ yields a function in $L^1(\Xi)$. 
For details we refer to \cite[Section 7]{Fulsche_Galke2023}.

\begin{lemma}
For $f \in L^\infty(\Xi)$ and $p \in (1,\infty)$ we have $T_{f, \varphi_0} = (\mathcal W_{\varphi_0}\varphi_0 \otimes \mathcal W_{\varphi_0} \varphi_0) \ast f$.    
\end{lemma}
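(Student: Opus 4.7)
The plan is to evaluate both sides on an arbitrary $g \in W_{\varphi_0}^p$ and compare them pointwise in $y \in \Xi$. Write $A := \mathcal W_{\varphi_0}\varphi_0 \otimes \mathcal W_{\varphi_0}\varphi_0$ for the rank-one operator $h \mapsto \langle h, \mathcal W_{\varphi_0}\varphi_0\rangle \mathcal W_{\varphi_0}\varphi_0$. Since $V_x$ is unitary on $L^2(\Xi)$, conjugation by $V_x$ turns $A$ into the rank-one operator
\[
\alpha_x(A) = V_x A V_x^{-1} = (V_x \mathcal W_{\varphi_0}\varphi_0) \otimes (V_x \mathcal W_{\varphi_0}\varphi_0),
\]
so, interpreting $A \ast f$ as in the extension of the convolution to $W_{\varphi_0}^1 \hat{\otimes}_\pi W_{\varphi_0}^1$ and $L^\infty(\Xi)$, its action on $g$ is the weak integral
\[
(A \ast f)(g) = \int_\Xi f(x)\, \sp{g}{V_x \mathcal W_{\varphi_0}\varphi_0}\, V_x \mathcal W_{\varphi_0}\varphi_0 \,\mathrm{d}x.
\]

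By the reproducing formula (Corollary \ref{cor:reproducing_formula}), the inner product collapses to $g(x)$. Evaluating at $y$ and inserting the explicit formula for $V_x$ yields
\[
(A \ast f)(g)(y) = \int_\Xi f(x) g(x) \frac{m(x, x^{-1})}{m(x^{-1}, y)} \mathcal W_{\varphi_0}\varphi_0(x^{-1} y)\,\mathrm{d}x.
\]
On the other hand, $T_{f,\varphi_0}(g) = P_{\varphi_0}(fg)$, and the integral kernel of $P_{\varphi_0}$ computed in the proof of Lemma \ref{lemma:projection_wiener_algebra} gives
\[
T_{f,\varphi_0}(g)(y) = \int_\Xi \mathcal W_{\varphi_0}\varphi_0(x^{-1} y)\, m(x, x^{-1} y) f(x) g(x)\,\mathrm{d}x.
\]
The two expressions coincide once one verifies $m(x, x^{-1} y) = m(x, x^{-1})/m(x^{-1}, y)$, which is just the 2-cocycle identity of Assumption \ref{ass_1} applied to the triple $(x, x^{-1}, y)$, together with $m(e, y) = 1$.

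The one point that requires care is justifying that the Bochner/weak integral defining $A \ast f$ makes sense for $f \in L^\infty(\Xi)$, so that the above manipulations are legitimate and Fubini can be applied. This is where I would lean on the extension of convolution outlined just before the statement (and developed in \cite{Fulsche_Galke2023}): using that $A \in W_{\varphi_0}^1 \hat{\otimes}_\pi W_{\varphi_0}^1$ and $1 < p < \infty$, one can pair $(A \ast f)(g)$ against any $h \in (W_{\varphi_0}^p)^*$ to reduce the identity to the scalar equality of absolutely convergent iterated integrals, at which point the cocycle computation finishes the proof.
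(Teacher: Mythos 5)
Your proof is correct and follows essentially the same route as the paper: both evaluate the convolution on $g \in W_{\varphi_0}^p$, collapse $\sp{g}{V_x\mathcal W_{\varphi_0}\varphi_0}$ to $g(x)$ via the reproducing formula, and match the resulting integrand against the twisted-convolution kernel of $P_{\varphi_0}$ using the cocycle identity $m(x,x^{-1}y)m(x^{-1},y)=m(x,x^{-1})$. The only cosmetic difference is that the paper phrases the final comparison through the intertwining $V_y\mathcal W_{\varphi_0}=\mathcal W_{\varphi_0}U_y$ rather than inserting the explicit formula for $V_x$, and it likewise treats the weak integral for $f\in L^\infty(\Xi)$ only ``in an appropriate weak sense,'' as you do.
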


\begin{proof}
    Let $g \in W_{\varphi_0}^p$. Then, using the cocycle identity
    \[m(y,y^{-1}x)m(y^{-1},x) = m(e,x)m(y,y^{-1}) = m(y,y^{-1}),\]
    we obtain
    \begin{align*}
        T_{f, \varphi_0}(g)(x) &= P_{\varphi_0}(f g)(x)\\
        &= \left((fg) \ast' \mathcal W_{\varphi_0} \varphi_0\right)(x)\\
        &= \int_\Xi f(y) g(y) \mathcal W_{\varphi_0}\varphi_0(y^{-1}x) m(y,y^{-1}x)~\mathrm{d}y\\
        &= \int_\Xi f(y) g(y) \frac{m(y^{-1},x)}{m(y,y^{-1})}\langle U_y \varphi_0, U_x \varphi_0\rangle m(y,y^{-1}x)~\mathrm{d}y\\
        &= \int_\Xi f(y) g(y) \langle U_y \varphi_0, U_x \varphi_0\rangle~\mathrm{d}y\\
        &= \int_\Xi f(y) g(y) \mathcal W_{\varphi_0} (U_y \varphi_0)(x)~\mathrm{d}y.
    \end{align*}
    On the other hand (in an appropriate weak sense),
    \begin{align*}
        \left(f \ast (\mathcal W_{\varphi_0}\varphi_0 \otimes \mathcal W_{\varphi_0}\varphi_0)\right)(g) &= \int_\Xi f(y) \langle g, V_y \mathcal W_{\varphi_0}\varphi_0\rangle V_y \mathcal W_{\varphi_0}\varphi_0~\mathrm{d}y\\
        &= \int_\Xi f(y) g(y) V_y \mathcal W_{\varphi_0}\varphi_0 ~\mathrm{d}y
    \end{align*}
    by Corollary \ref{cor:reproducing_formula}. Since the Wavelet transform intertwines $V_y$ and $U_y$, equality follows.
\end{proof}

We will also make use of the following facts.

\begin{lemma}
    Let $K \subseteq \Xi$ be compact and $p \in (1,\infty)$. Then, $P_{\varphi_0} M_{\mathbf 1_K}: W_{\varphi_0}^p \to W_{\varphi_0}^p$ and $(I - P_{\varphi_0})M_{\mathbf 1_K}: W_{\varphi_0}^p \to L^p(\Xi)$ are compact.
\end{lemma}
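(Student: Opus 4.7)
The plan is to reduce both compactness claims to the single statement that $M_{\mathbf 1_K} \from W_{\varphi_0}^p \to L^p(\Xi)$ is compact. Since $P_{\varphi_0}$ maps $L^p(\Xi)$ boundedly into $W_{\varphi_0}^p$ by Lemma \ref{lemma:projection_wiener_algebra} and $I - P_{\varphi_0}$ is bounded on $L^p(\Xi)$, both $P_{\varphi_0} M_{\mathbf 1_K}$ and $(I - P_{\varphi_0}) M_{\mathbf 1_K}$ then appear as compositions of a compact operator with a bounded one, hence are compact.

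For the core claim I would apply the Arzelà–Ascoli theorem to bounded sequences $(g_n)$ in $W_{\varphi_0}^p$. Let $q$ denote the Hölder conjugate of $p$. By the reproducing formula in Corollary \ref{cor:reproducing_formula}, $g_n(x) = \sp{g_n}{V_x \Wc_{\varphi_0}\varphi_0}$; since $\Wc_{\varphi_0}\varphi_0 \in L^1(\Xi)$ by Assumption \ref{ass_2} and is bounded by Cauchy–Schwarz, it lies in $L^q(\Xi)$, and since $V_x$ is an $L^q$-isometry, Hölder's inequality yields the uniform bound $\abs{g_n(x)} \leq \norm{g_n}_p \norm{\Wc_{\varphi_0}\varphi_0}_q$. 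Equicontinuity on $K$ follows from the analogous estimate $\abs{g_n(x) - g_n(y)} \leq \norm{g_n}_p \norm{V_x \Wc_{\varphi_0}\varphi_0 - V_y \Wc_{\varphi_0}\varphi_0}_q$ once one knows that $(V_x)$ is strongly continuous on $L^q(\Xi)$: then $x \mapsto V_x \Wc_{\varphi_0}\varphi_0$ is a continuous map from the compact set $K$ into $L^q(\Xi)$ and is therefore uniformly continuous. Arzelà–Ascoli then provides a subsequence $(g_{n_k}|_K)$ converging uniformly on $K$, and since $K$ has finite Haar measure this convergence also takes place in $L^p(K)$; extending by zero yields the desired $L^p(\Xi)$-convergence of $(M_{\mathbf 1_K} g_{n_k})$.

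The main obstacle I expect is justifying strong continuity of $(V_x)$ on $L^q(\Xi)$ for $q \neq 2$, since the excerpt explicitly states this only for $q = 2$. I anticipate this is routine: $V_x$ is an isometry on $L^q$ for every $q \in [1,\infty]$, so an $\epsilon/3$-argument reduces the task to strong continuity on the dense subspace $C_c(\Xi)$, where pointwise convergence follows from the separate continuity of $m$ and dominated convergence applies with a compactly supported majorant obtained from a fixed compact neighborhood of the identity.
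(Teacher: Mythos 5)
Your route is genuinely different from the paper's and, at its core, sound. The paper treats the two operators separately: it identifies $P_{\varphi_0}M_{\mathbf 1_K}$ with the Toeplitz operator $T_{\mathbf 1_K,\varphi_0} = (\Wc_{\varphi_0}\varphi_0\otimes\Wc_{\varphi_0}\varphi_0)\ast\mathbf 1_K$, which is nuclear by quantum harmonic analysis, and for $(I-P_{\varphi_0})M_{\mathbf 1_K}$ it uses the Hankel identity $H^*H = T - T^2$ at $p=2$ and then complex interpolation of compact operators to reach general $p$. You instead prove the single stronger statement that $M_{\mathbf 1_K}\from W_{\varphi_0}^p\to L^p(\Xi)$ is compact (which the paper only extracts afterwards, in Corollary \ref{cor:multiplication_compact}) via the reproducing formula and Arzel\`a--Ascoli, and deduce both claims by composing with the bounded operators $P_{\varphi_0}$ and $I-P_{\varphi_0}$. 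This is more elementary and self-contained: it avoids nuclearity of QHA convolutions and, more importantly, the interpolation-of-compact-operators machinery whose applicability on non-separable measure spaces the authors themselves flag as delicate. The Arzel\`a--Ascoli step is fine even though $K$ need not be metrizable, since $C(K)$ is still a Banach space and relative compactness there is sequential.

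The one genuine soft spot is exactly the step you flag, and your proposed fix does not quite work in the paper's generality. First, $\Xi$ need not be first countable, so ``$x\to x_0$'' is net convergence and dominated convergence is not available. Second, $m$ is only \emph{separately} continuous, so neither the convergence of the scalar $m(x,x^{-1})$ nor the uniform (on $\supp f$) convergence of $y\mapsto m(x^{-1},y)$ as $x\to x_0$ is automatic; the classical $\epsilon/3$-argument on $C_c(\Xi)$ handles the translation part but not these phase factors. Fortunately you do not need strong continuity of $(V_x)$ on all of $L^q(\Xi)$: both the uniform bound and the equicontinuity estimate only involve the single orbit $x\mapsto V_x\Wc_{\varphi_0}\varphi_0 = \Wc_{\varphi_0}(U_x\varphi_0)$. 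Since $\varphi_0\in\Hc_1$, this map is continuous in $L^1(\Xi)$ (this is the standard coorbit-theoretic fact that $U$ acts as a strongly continuous isometric representation on $\Co_1(U)$), and $\norm{V_x\Wc_{\varphi_0}\varphi_0}_\infty\le 1$ uniformly in $x$; the interpolation inequality $\norm{h}_q\le\norm{h}_1^{1/q}\norm{h}_\infty^{1-1/q}$ then gives continuity in $L^q(\Xi)$ for every $q<\infty$. With that substitution your proof closes completely.
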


\begin{proof}
    Note that $P_{\varphi_0}M_{\mathbf 1_K} = T_{\mathbf 1_K, \varphi_0} = (\mathcal W_{\varphi_0}\varphi_0 \otimes \mathcal W_{\varphi_0}\varphi_0) \ast \mathbf 1_K$. Since $\mathcal W_{\varphi_0}\varphi_0 \otimes \mathcal W_{\varphi_0}\varphi_0$ acts as a nuclear operator on $W_{\varphi_0}^p$ and $\mathbf 1_K \in L^1(\Xi)$, this convolution even yields a nuclear operator on $W_{\varphi_0}^p$; cf.\ \cite[Section 7]{Fulsche_Galke2023}.

    For $p = 2$ the compactness of $(I - P_{\varphi_0})M_{\mathbf 1_K}$ can be seen as follows. Define the Hankel operator 
    \[H_{\mathbf 1_K, \varphi_0} := (I - P_{\varphi_0})M_{\mathbf 1_K} \from W_{\varphi_0}^2 \to L^2(\Xi).\]
    It follows
    \begin{align*}
    H_{\mathbf 1_K, \varphi_0}^\ast H_{\mathbf 1_K, \varphi_0} = T_{\mathbf 1_K, \varphi_0} - T_{\mathbf 1_K, \varphi_0}^2
    \end{align*}
    and since $T_{\mathbf 1_K, \varphi_0}$ is compact, $H_{\mathbf 1_K, \varphi_0}$ is compact as well.

    We are rather certain that one can prove the compactness of $H_{\mathbf 1_K, \varphi_0}$ directly by considering it as an integral operator on $L^p(\Xi)$ and employing results on the $pq$-norm of the kernel (see, e.g., \cite[Theorem 41.6]{Zaanen1997}). However, such theorems are usually formulated for separable measure spaces. Since we do not want to spend a significant part of the paper on discussions about such theorems for non-separable measure spaces, we mention that the compactness of $H_{\mathbf 1_K, \varphi_0}$ in case $p = 2$ implies that $H_{\mathbf 1_K, \varphi_0}: W_{\varphi_0}^p \to L^p(\Xi)$ is compact for each $1 < p < \infty$ by results on the complex interpolation of compact operators, see \cite{Cwikel1992, Persson1963}, for instance.
\end{proof}

\begin{corollary} \label{cor:multiplication_compact}
    Let $K \subseteq \Xi$ be compact and $p \in (1,\infty)$. Then both $P_{\varphi_0}M_{\mathbf 1_K} \from L^p(\Xi) \to L^p(\Xi)$ and $M_{\mathbf 1_K}P_{\varphi_0}: L^p(\Xi) \to L^p(\Xi)$ are compact. 
\end{corollary}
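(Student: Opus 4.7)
The plan is to deduce both statements from the preceding lemma, which provides compactness of $P_{\varphi_0} M_{\mathbf 1_K} \from W_{\varphi_0}^p \to W_{\varphi_0}^p$ together with compactness of the Hankel-type operator $(I - P_{\varphi_0}) M_{\mathbf 1_K} \from W_{\varphi_0}^p \to L^p(\Xi)$.

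For $M_{\mathbf 1_K} P_{\varphi_0}$ a direct factorization suffices. Adding the two statements of the preceding lemma shows that $M_{\mathbf 1_K} \from W_{\varphi_0}^p \to L^p(\Xi)$ is compact. Since $P_{\varphi_0} \from L^p(\Xi) \to W_{\varphi_0}^p$ is bounded by Lemma \ref{lemma:projection_wiener_algebra}, the composition $M_{\mathbf 1_K} P_{\varphi_0} \from L^p(\Xi) \to L^p(\Xi)$ is compact.

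For $P_{\varphi_0} M_{\mathbf 1_K}$ the analogous factorization fails, since $M_{\mathbf 1_K}$ does not map $L^p(\Xi)$ into $W_{\varphi_0}^p$. I would first settle $p = 2$ via the decomposition
\[
P_{\varphi_0} M_{\mathbf 1_K} = P_{\varphi_0} M_{\mathbf 1_K} P_{\varphi_0} + P_{\varphi_0} M_{\mathbf 1_K}(I - P_{\varphi_0}).
\]
The first summand factors as $L^2(\Xi) \xrightarrow{P_{\varphi_0}} W_{\varphi_0}^2 \xrightarrow{P_{\varphi_0} M_{\mathbf 1_K}} W_{\varphi_0}^2 \hookrightarrow L^2(\Xi)$ and is compact by the Toeplitz part of the preceding lemma. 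Since both $P_{\varphi_0} = \Wc_{\varphi_0} \Wc_{\varphi_0}^\ast$ and $M_{\mathbf 1_K}$ are self-adjoint on $L^2(\Xi)$, the second summand is the $L^2$-adjoint of $(I - P_{\varphi_0}) M_{\mathbf 1_K} P_{\varphi_0}$, which factors as $L^2(\Xi) \xrightarrow{P_{\varphi_0}} W_{\varphi_0}^2 \xrightarrow{(I - P_{\varphi_0}) M_{\mathbf 1_K}} L^2(\Xi)$, compact by the Hankel part of the preceding lemma. As adjoints of compact operators are compact, this yields compactness of $P_{\varphi_0} M_{\mathbf 1_K}$ on $L^2(\Xi)$.

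To pass from $p = 2$ to general $p \in (1, \infty)$, I would invoke the same complex interpolation theorem for compact operators already used in the proof of the preceding lemma (Cwikel, Persson). Lemma \ref{lemma:projection_wiener_algebra} combined with Young's inequality yields boundedness of $P_{\varphi_0} M_{\mathbf 1_K}$ on $L^p(\Xi)$ for every $p \in [1, \infty]$, and interpolating the $L^2$-compactness against the endpoint boundedness statements via the couples $(L^1, L^2)$ and $(L^2, L^\infty)$ delivers compactness on $L^p(\Xi)$ for all $p \in (1, \infty)$. The main obstacle is precisely this $p \neq 2$ case for $P_{\varphi_0} M_{\mathbf 1_K}$: the direct factorization available for $M_{\mathbf 1_K} P_{\varphi_0}$ is not accessible and the self-adjoint trick is confined to $L^2$, so interpolation is the natural remedy.
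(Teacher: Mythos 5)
Your argument for $M_{\mathbf 1_K}P_{\varphi_0}$ is exactly the paper's: sum the two parts of the preceding lemma to get compactness of $M_{\mathbf 1_K} \from W_{\varphi_0}^p \to L^p(\Xi)$ and compose with the bounded projection. For $P_{\varphi_0}M_{\mathbf 1_K}$, however, you take a genuinely different and considerably longer route. The paper disposes of it in one line: $P_{\varphi_0}M_{\mathbf 1_K}$ on $L^q(\Xi)$ is the Banach-space adjoint of $M_{\mathbf 1_K}P_{\varphi_0}$ on $L^p(\Xi)$ under the stated identification $L^p(\Xi)^* \cong L^q(\Xi)$ (the kernel of $P_{\varphi_0}$ satisfies $\overline{k(x,y)} = k(y,x)$ because $P_{\varphi_0}$ is an orthogonal projection on $L^2$, so $P_{\varphi_0}^* = P_{\varphi_0}$ and $M_{\mathbf 1_K}^* = M_{\mathbf 1_K}$ in this duality), and Schauder's theorem plus the fact that $q$ ranges over all of $(1,\infty)$ as $p$ does finishes the proof. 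Your closing remark that ``the self-adjoint trick is confined to $L^2$'' is therefore the one inaccuracy in your reasoning: formal self-adjointness of $P_{\varphi_0}$ and $M_{\mathbf 1_K}$ survives perfectly well in the $L^p$--$L^q$ pairing, and exploiting it avoids interpolation entirely. That said, your argument does go through: the decomposition $P_{\varphi_0}M_{\mathbf 1_K} = P_{\varphi_0}M_{\mathbf 1_K}P_{\varphi_0} + \bigl((I-P_{\varphi_0})M_{\mathbf 1_K}P_{\varphi_0}\bigr)^*$ correctly settles $p=2$, the endpoint boundedness on $L^1$ and $L^\infty$ follows from Lemma \ref{lemma:projection_wiener_algebra}, and the interpolation step carries only the same caveats (non-separable measure spaces, compactness under the complex method) that the paper already accepts in the proof of the preceding lemma. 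What the adjoint route buys is precisely that it sidesteps those caveats and a second invocation of Cwikel--Persson.
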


\begin{proof}
    By the previous lemma, we obtain that $M_{\mathbf 1_K}: W_{\varphi_0}^p \to L^p(\Xi)$ is compact, hence also $M_{\mathbf 1_K}P_{\varphi_0}$. The compactness of $P_{\varphi_0}M_{\mathbf 1_K}$ follows by taking adjoints.
\end{proof}

\section{Properties of limit operators}\label{sec:limit_ops}

We recall the concept of limit operators and give some of its properties. The facts presented here are straightforward generalizations of the Hilbert space case presented in \cite[Section 3.1]{Fulsche_Luef_Werner2024}. Nevertheless, for the reader's convenience, we decided to add some details on this. Throughout this section, we will assume that $p \in (1,\infty)$.

Let $\mathcal N(W_{\varphi_0}^p)$ denote the ideal of nuclear operators in $\Lc(W_{\varphi_0}^p)$. The strong continuity of $x \mapsto V_x$ implies the norm continuity of
\[x \mapsto \alpha_x(A) := V_xAV_x^{-1}\]
for any finite rank operator $A$ and therefore, by approximation, for every $A \in \mathcal N(W_{\varphi_0}^p)$. As $W_{\varphi_0}^p$ is reflexive, the dual of $\mathcal N(W_{\varphi_0}^p)$ can be identified with $\mathcal L(W_{\varphi_0}^p)$ through trace duality: For $A \in \mathcal L(W_{\varphi_0}^p)$, the corresponding functional is given by $\varphi_A(N) = \tr(NA)$, and every bounded linear functional on $\mathcal N(W_{\varphi_0}^p)$ is of this form.

Given $A \in \mathcal L(W_{\varphi_0}^p)$ and $N \in \mathcal N(W_{\varphi_0}^p)$, the map
\begin{align*}
    \Xi \ni x \mapsto f_{N,A}(x) := \tr(N \alpha_x(A))
\end{align*}
is a bounded uniformly continuous function on $\Xi$. Hence, it extends to a continuous function on $\sigma \Xi := \mathcal M(\BUC(\Xi))$, which we again denote by $f_{N,A}$. Here, $\mathcal M(\BUC(\Xi))$ denotes the maximal ideal space of the bounded uniformly continuous functions on $\Xi$, which we consider as a compactification of $\Xi$ by identifying points in $\Xi$ with the corresponding point evaluation functionals. In addition, we will write $\partial \Xi := \sigma \Xi \setminus \Xi$ for the boundary of $\Xi$ in this compactification. Then, for each $x \in \partial \Xi$, we still have $|f_{N, A}(x)| \leq \| N\|_{\mathcal N} \| A\|_{op}$. Clearly, for fixed $x \in \sigma\Xi$, $f_{N, A}(x)$ is linear in $N$, and therefore acts as a bounded linear functional on $\mathcal N(W_{\varphi_0}^p)$. Therefore, there exists a unique $\alpha_x(A) \in \mathcal L(W_{\varphi_0}^p)$ such that for all $N \in \mathcal N(W_{\varphi_0}^p)$ the equaility $f_{N, A}(x) = \tr(N\alpha_x(A))$ holds. In summary, we get the following lemma.

\begin{lemma}
    Let $A \in \mathcal L(W_{\varphi_0}^p)$. Then, the map $\Xi \ni x \mapsto \alpha_x(A)$ uniquely extends to a weak$^\ast$-continuous map from $\sigma \Xi$ to $\mathcal L(W_{\varphi_0}^p)$.
\end{lemma}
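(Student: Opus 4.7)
The proof will largely be a bookkeeping exercise that organizes the material in the paragraphs immediately preceding the lemma statement. Here is the plan.

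First I would make precise why, for every fixed $N \in \mathcal N(W_{\varphi_0}^p)$ and $A \in \mathcal L(W_{\varphi_0}^p)$, the function $f_{N,A}\from \Xi \to \mathbb C$ is bounded and uniformly continuous. Boundedness is immediate from $|f_{N,A}(x)| \leq \|N\|_{\mathcal N}\|A\|_{op}$. For uniform continuity I would use the trace identity
\[f_{N,A}(xy) = \tr(N V_{xy} A V_{xy}^{-1}) = \tr(\alpha_{(xy)^{-1}}(N) A),\]
together with the norm continuity of $x \mapsto \alpha_x(N)$ on $\mathcal N(W_{\varphi_0}^p)$ recalled at the beginning of Section~\ref{sec:limit_ops}. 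Since $\alpha_x$ acts isometrically on $\mathcal N(W_{\varphi_0}^p)$, norm continuity at $e$ upgrades to uniform continuity on $\Xi$ by the usual group argument $\|\alpha_{xy}(N) - \alpha_x(N)\|_{\mathcal N} = \|\alpha_y(N) - N\|_{\mathcal N}$. Pairing with $A$ through the trace then yields uniform continuity of $f_{N,A}$.

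Next, having $f_{N,A} \in \BUC(\Xi)$, I would invoke the standard fact that every element of $\BUC(\Xi)$ extends uniquely to a continuous function on $\sigma\Xi = \mathcal M(\BUC(\Xi))$ via the Gelfand transform, and retain the notation $f_{N,A}$ for this extension. Then for each fixed $x \in \sigma\Xi$ the assignment $N \mapsto f_{N,A}(x)$ is linear on $\mathcal N(W_{\varphi_0}^p)$ (linearity transfers from $\Xi$ to $\sigma\Xi$ pointwise) and bounded by $\|A\|_{op}$, so by trace duality there is a unique element $\alpha_x(A) \in \mathcal L(W_{\varphi_0}^p)$ with
\[\tr(N\alpha_x(A)) = f_{N,A}(x) \quad \text{for all } N \in \mathcal N(W_{\varphi_0}^p).\]
This defines the extension and, by construction, agrees with the original $\alpha_x(A)$ when $x \in \Xi$.

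Finally, weak$^\ast$-continuity of the extension is a direct translation: a net $x_\lambda \to x$ in $\sigma\Xi$ gives $f_{N,A}(x_\lambda) \to f_{N,A}(x)$ by continuity of the Gelfand extension, i.e., $\tr(N\alpha_{x_\lambda}(A)) \to \tr(N\alpha_x(A))$ for every $N \in \mathcal N(W_{\varphi_0}^p)$, which is precisely weak$^\ast$-convergence in $\mathcal L(W_{\varphi_0}^p) = \mathcal N(W_{\varphi_0}^p)^\ast$. Uniqueness of the extension is then clear: any weak$^\ast$-continuous extension is determined by its values on the dense subset $\Xi \subseteq \sigma\Xi$ because $\mathcal N(W_{\varphi_0}^p)$ separates points of $\mathcal L(W_{\varphi_0}^p)$.

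The only genuine work is the uniform continuity of $f_{N,A}$; everything else is a consequence of trace duality and the universal property of $\sigma\Xi$. I do not expect any real obstacle, since the only nontrivial input, norm continuity of the conjugation action on $\mathcal N(W_{\varphi_0}^p)$, has already been recorded.
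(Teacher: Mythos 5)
Your proposal is correct and follows essentially the same route as the paper: the paper's "proof" is precisely the preceding paragraph, which passes through uniform continuity of $f_{N,A}$, the Gelfand extension to $\sigma\Xi$, and trace duality to recover $\alpha_x(A)$. The only detail you add beyond the paper is the explicit verification of uniform continuity via $\tr(N\alpha_x(A)) = \tr(\alpha_{x^{-1}}(N)A)$ and the norm continuity of the conjugation action on nuclear operators, which is exactly the intended justification.
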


The operators $\alpha_x(A)$, $x \in \partial \Xi$, are called \emph{limit operators of $A$}. Here are two important properties of limit operators:

\begin{lemma} \label{lem:limit_operator_properties}
    Let $A, B \in \mathcal L(W_{\varphi_0}^p)$ and $x \in \sigma\Xi$. 
    \begin{enumerate}
        \item[(i)] For $\lambda, \mu \in \mathbb C$ we have $\alpha_x(\lambda A + \mu B) = \lambda \alpha_x(A) + \mu \alpha_x(B)$.
        \item[(ii)] The inequality $\| \alpha_x(A)\|_{op} \leq \| A\|_{op}$ holds.
    \end{enumerate}
\end{lemma}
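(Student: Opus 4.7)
My plan is to verify both statements first on the dense subset $\Xi \subseteq \sigma \Xi$, where $\alpha_y(A) = V_y A V_y^{-1}$ is explicit, and then push the identities to all of $\sigma \Xi$ by the weak$^\ast$-continuity established in the preceding lemma together with the trace duality $\mathcal L(W_{\varphi_0}^p) \cong \mathcal N(W_{\varphi_0}^p)^\ast$ (which is available thanks to reflexivity of $W_{\varphi_0}^p$).

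For (i), I would fix $N \in \mathcal N(W_{\varphi_0}^p)$ and observe that on $\Xi$ the map $y \mapsto V_y(\cdot)V_y^{-1}$ is obviously linear in its argument, so $f_{N, \lambda A + \mu B}(y) = \lambda f_{N,A}(y) + \mu f_{N,B}(y)$ for every $y \in \Xi$. Both sides are continuous functions on $\sigma \Xi$, and since $\Xi$ is dense in $\sigma \Xi$, the identity persists at every $x \in \sigma \Xi$. Unwinding via the defining property of the limit operators gives
\[
\tr\bigl(N\, \alpha_x(\lambda A + \mu B)\bigr) = \tr\bigl(N(\lambda \alpha_x(A) + \mu \alpha_x(B))\bigr)
\]
for every $N \in \mathcal N(W_{\varphi_0}^p)$. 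Trace duality then yields the operator identity.

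For (ii), the point is that $V_y$ is a surjective isometry of $L^p(\Xi)$ and commutes with $P_{\varphi_0}$ (Lemma \ref{lem:commutation_properties_V_z}), so it restricts to a surjective isometry of $W_{\varphi_0}^p$; hence $\|\alpha_y(A)\|_{op} = \|A\|_{op}$ for every $y \in \Xi$. Consequently $|f_{N,A}(y)| \leq \|N\|_{\mathcal N}\|A\|_{op}$ for $y \in \Xi$, and by continuity this bound carries over to every $x \in \sigma \Xi$ (exactly as recorded in the paragraph just before the lemma). Taking the supremum over $N$ in the unit ball of $\mathcal N(W_{\varphi_0}^p)$ and using trace duality concludes $\|\alpha_x(A)\|_{op} \leq \|A\|_{op}$.

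There is no genuine obstacle here: both parts reduce to the routine principle that a continuous function on $\sigma \Xi$ is determined by its values on $\Xi$, applied after testing against nuclear operators. The only thing one has to take care of is that the argument against nuclear operators is strong enough to recover an operator identity (and not merely a weak$^\ast$ one), which is exactly what reflexivity of $W_{\varphi_0}^p$ and the identification $\mathcal L(W_{\varphi_0}^p) \cong \mathcal N(W_{\varphi_0}^p)^\ast$ provide.
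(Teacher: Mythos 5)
Your proof is correct and follows essentially the same route as the paper, which simply invokes the linearity of $f_{N,A}$ in $A$, the bound $|f_{N,A}(x)| \leq \|N\|_{\mathcal N}\|A\|_{op}$ on all of $\sigma\Xi$, and the trace duality $\mathcal N(W_{\varphi_0}^p)^\ast \cong \mathcal L(W_{\varphi_0}^p)$; you have merely spelled out the density/continuity argument that the paper leaves implicit.
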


\begin{proof}
    Both properties follow from the properties of the function $f_{N,A}$ described above.
\end{proof}
In the following, we will frequently work with operators from the Banach algebra
\begin{align*}
    \mathcal C_1^p(\varphi_0) := \{A \in \mathcal L(W_{\varphi_0}^p): ~\| A - \alpha_x(A)\|_{op} \to 0 \text{ as } x \to e\}.
\end{align*}
If $A \in \mathcal C_1^p(\varphi_0)$, the map $x \mapsto \alpha_x(A)$ extends to a strongly continuous map on $\sigma\Xi$. To see this, we briefly need to discuss the classical analogue of limit operators, namely limit functions. For any function $f \from \Xi \to \C$ and $x \in \Xi$ define
\[\alpha_x(f) \from \Xi \to \C, \quad \alpha_x(f)(y) := f(x^{-1}y).\]
By a similar argument to that for the duality $\mathcal N(W_{\varphi_0}^p)' \cong \mathcal L(W_{\varphi_0}^p)$ described above, the following holds:

\begin{lemma}[{\cite[Section 2.1]{Fulsche_Luef_Werner2024}}]
    Let $f \in L^\infty(\Xi)$. Then, the map
    \begin{align*}
        \Xi \ni x \mapsto \alpha_x(f) \in L^\infty(\Xi)
    \end{align*}
    uniquely extends to a weak$^\ast$-continuous map from $\sigma \Xi$ to $L^\infty(\Xi)$. The functions $\alpha_x(f)$, $x \in \partial \Xi$, are called \emph{limit functions} and satisfy
    \begin{align*}
        \alpha_x(\lambda f + \mu g) = \lambda \alpha_x(f) + \mu \alpha_x(g), \quad \| \alpha_x(f)\|_\infty \leq \| f\|_\infty, \quad \text{for } f, g \in L^\infty(\Xi), x \in \partial \Xi, \lambda,\mu \in \C.
    \end{align*}
\end{lemma}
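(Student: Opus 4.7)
The plan is to mirror, almost verbatim, the authors' construction of limit operators, replacing the trace duality $\mathcal N(W_{\varphi_0}^p)' \cong \mathcal L(W_{\varphi_0}^p)$ with the classical duality $L^1(\Xi)' \cong L^\infty(\Xi)$ coming from Haar measure on $\Xi$. For $f \in L^\infty(\Xi)$ and $g \in L^1(\Xi)$ I would introduce the pairing function
\[h_{g,f} \from \Xi \to \C, \qquad h_{g,f}(x) := \sp{g}{\alpha_x(f)} = \int_\Xi g(y) f(x^{-1}y)\,\mathrm{d}y,\]
and verify that $h_{g,f} \in \BUC(\Xi)$ with $\norm{h_{g,f}}_\infty \leq \norm{g}_{L^1}\norm{f}_\infty$.

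Boundedness is immediate from Hölder. For uniform continuity, left-invariance of Haar measure yields $h_{g,f}(x) = \int_\Xi f(z) g(xz)\,\mathrm{d}z$, hence
\[\abs{h_{g,f}(x) - h_{g,f}(x')} \leq \norm{f}_\infty \int_\Xi \abs{g(xz) - g(x'z)}\,\mathrm{d}z,\]
which, after one more change of variable, equals $\norm{f}_\infty$ times the $L^1$-norm of the difference between $g$ and a translate of $g$; this tends to zero uniformly in $x$ as $x'x^{-1} \to e$ by norm continuity of translation on $L^1(\Xi)$. Gelfand theory then extends $h_{g,f}$ uniquely to a continuous function on $\sigma\Xi = \mathcal M(\BUC(\Xi))$, still denoted $h_{g,f}$, with $\abs{h_{g,f}(x)} \leq \norm{f}_\infty \norm{g}_{L^1}$ on all of $\sigma\Xi$.

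For each $x \in \sigma\Xi$, the assignment $g \mapsto h_{g,f}(x)$ is then a bounded linear functional on $L^1(\Xi)$ of norm at most $\norm{f}_\infty$, so the duality $L^1(\Xi)' \cong L^\infty(\Xi)$ produces a unique $\alpha_x(f) \in L^\infty(\Xi)$ with $\norm{\alpha_x(f)}_\infty \leq \norm{f}_\infty$ satisfying $h_{g,f}(x) = \sp{g}{\alpha_x(f)}$ for every $g \in L^1(\Xi)$; this agrees with the original definition when $x \in \Xi$ by construction. Weak$^\ast$-continuity of the extended map is built into the definition, since it says exactly that $x \mapsto \sp{g}{\alpha_x(f)}$ is continuous for every $g \in L^1(\Xi)$; uniqueness of the extension follows from the density of $\Xi$ in $\sigma\Xi$ together with Hausdorffness of the weak$^\ast$ topology, and the linearity in $f$ as well as the norm estimate stated in the lemma are inherited directly from the corresponding properties of $h_{g,f}$. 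The essentially only nontrivial step is the uniform continuity of $h_{g,f}$: this is where the absence of countability assumptions on $\Xi$ could be feared to cause trouble, but it is completely taken care of by the classical fact that the left regular representation of any lca group on $L^1(\Xi)$ is strongly continuous.
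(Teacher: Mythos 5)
Your proposal is correct and is exactly the argument the paper intends: it states the lemma by reference to \cite[Section 2.1]{Fulsche_Luef_Werner2024} with the remark that it follows ``by a similar argument to that for the duality $\mathcal N(W_{\varphi_0}^p)' \cong \mathcal L(W_{\varphi_0}^p)$,'' and you have simply carried out that duality argument with $L^1(\Xi)' \cong L^\infty(\Xi)$ in place of trace duality. The only cosmetic point is that the paper's pairing $\sp{\cdot}{\cdot}$ carries a complex conjugate, which you should thread through consistently; it does not affect the argument.
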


For $f \in \BUC(\Xi)$, the theorem of Arzel\'{a}-Ascoli implies that the map $\sigma \Xi \ni x \mapsto \alpha_x(f)$ is continuous with respect to the compact-open topology; cf.\ \cite[Proposition 2.8]{Fulsche_Luef_Werner2024}. With this fact available, we now return to limit operators.

We first mention that $\Co_p(U)$ has the bounded approximation property (BAP) for every $1 < p < \infty$. Indeed, as already mentioned earlier, $\Co_p(U)$ is isometrically equivalent to $W_{\varphi_0}^p$, and $W_{\varphi_0}^p$ is a closed subspace of $L^p(\Xi)$. Since each $L^p(\Xi)$ has the bounded approximation property and $P_{\varphi_0}$ is a bounded projection from $L^p(\Xi)$ onto $W_{\varphi_0}^p$, it follows that $W_{\varphi_0}^p$ has the bounded approximation property, hence so does $\Co_p(U)$. This implies, in particular, that the class of nuclear operators on $\Co_p(U)$ can be written as a projective tensor product:
\begin{align*}
    \mathcal N(\Co_p(U)) \cong \Co_p(U) \widehat{\otimes}_\pi \Co_p(U)' \cong \Co_p(U) \widehat{\otimes}_\pi Co_{q}(U), \quad \text{ where } \frac{1}{p} + \frac{1}{q} = 1.
\end{align*}
Since $\Co_1(U)$ is dense in both $\Co_p(U)$ and $\Co_q(U)$, we see that $\Co_1(U) \widehat{\otimes}_\pi \Co_1(U)$ is dense in $\mathcal N(\Co_p(U))$. We write $\mathcal N_0 = W_{\varphi_0}^1 \hat{\otimes}_\pi W_{\varphi_0}^1$. By what was just mentioned, $\mathcal N_0$ is densely contained in $\mathcal N(W_{\varphi_0}^p)$ for each $1 < p < \infty$: We have the following important fact:

\begin{proposition}\label{prop:C1N0}
    For each $1 < p < \infty$ it is $\overline{\mathcal N_0 \ast \BUC(\Xi)} = \mathcal C_1^p(\varphi_0)$. 
\end{proposition}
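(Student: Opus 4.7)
For the inclusion $\overline{\mathcal N_0 \ast \BUC(\Xi)} \subseteq \mathcal C_1^p(\varphi_0)$, since $\mathcal C_1^p(\varphi_0)$ is norm-closed, it suffices to check that $N \ast f \in \mathcal C_1^p(\varphi_0)$ for $N \in \mathcal N_0$ and $f \in \BUC(\Xi)$. Since the multiplier cancels under conjugation, $\alpha_x \alpha_y = \alpha_{xy}$ on bounded operators, and a change of variables in the defining integral gives $\alpha_x(N \ast f) = N \ast \alpha_x(f)$, where $\alpha_x(f)(y) = f(x^{-1}y)$. Combined with the boundedness of the map $L^\infty(\Xi) \ni g \mapsto N \ast g \in \mathcal L(W_{\varphi_0}^p)$ and the uniform continuity of $f$, this yields
\[\| N \ast f - \alpha_x(N \ast f)\|_{op} \leq C_N\,\| f - \alpha_x(f)\|_\infty \to 0 \quad \text{as } x \to e.\]

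For the reverse inclusion, the plan is a two-step approximation. First, given $A \in \mathcal C_1^p(\varphi_0)$ and an approximate identity $(h_\lambda)$ in $L^1(\Xi)$ (nonnegative, compactly supported, $\int h_\lambda = 1$, supports shrinking to $\{e\}$), the estimate
\[\| h_\lambda \ast A - A\|_{op} \leq \int_\Xi h_\lambda(y)\,\| \alpha_y(A) - A\|_{op}\,\mathrm{d}y\]
together with the defining property of $\mathcal C_1^p(\varphi_0)$ shows $h_\lambda \ast A \to A$ in operator norm. It therefore suffices to place each $h_\lambda \ast A$ into $\overline{\mathcal N_0 \ast \BUC(\Xi)}$.

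The main tool for the second step is the associativity
\[(S_1 \ast S_2) \ast A = S_1 \ast (S_2 \ast A), \qquad S_1, S_2 \in \mathcal N_0,\ A \in \mathcal L(W_{\varphi_0}^p),\]
coming from quantum harmonic analysis (cf.\ \cite[Section 7]{Fulsche_Galke2023}): since $S_2 \ast A \in \BUC(\Xi)$, the right-hand side already lies in $\mathcal N_0 \ast \BUC(\Xi)$, so approximating $h_\lambda$ in $L^1(\Xi)$ by convolutions $S_1 \ast S_2 \in \mathcal N_0 \ast \mathcal N_0$ yields the desired approximation, with error controlled by $\| S_1 \ast S_2 - h_\lambda\|_1\, \| A\|_{op}$. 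The main technical obstacle is therefore to show that $\mathcal N_0 \ast \mathcal N_0$ is dense in $L^1(\Xi)$. Since one checks via the cyclicity of the trace that $L_x(S_1 \ast S_2) = \alpha_x(S_1) \ast S_2$, the subspace $\mathcal N_0 \ast \mathcal N_0$ is translation-invariant in $L^1(\Xi)$, and the density reduces via a Wiener Tauberian argument on the lca group $\Xi$ to verifying that the (symplectic) Fourier transforms of its elements have no common zero; this I expect to follow from the Plancherel-type formalism underpinning quantum harmonic analysis on abelian phase spaces, most likely through results already available in \cite{Fulsche_Galke2023} or \cite{Fulsche_Hagger2025}.
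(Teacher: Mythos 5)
Your proposal is correct and follows essentially the same route as the paper: the easy inclusion via $\alpha_x(N\ast f)=N\ast\alpha_x(f)$ and uniform continuity, and the reverse inclusion by writing $A$ as a limit of $(S_1\ast S_2)\ast A=S_1\ast(S_2\ast A)\in\mathcal N_0\ast\BUC(\Xi)$, using that $\mathcal C_1^p(\varphi_0)$ is an essential $L^1(\Xi)$-module and that $\mathcal N_0\ast\mathcal N_0$ is dense in $L^1(\Xi)$. The paper likewise takes that last density as a known fact from the quantum harmonic analysis framework, so your deferral of it (with the Wiener Tauberian sketch) matches the paper's level of detail.
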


\begin{proof}
    By basic properties of convolutions, we see that $\mathcal N_0 \ast \BUC(\Xi) \subseteq \mathcal C_1^p(\varphi_0)$, as well as $\mathcal N_0 \ast \mathcal C_1^p(\varphi_0) \subseteq \BUC(\Xi)$. The latter of course also yields $\mathcal N_0 \ast \mathcal N_0 \ast \mathcal C_1^p(\varphi_0) \subseteq \mathcal C_1^p(\varphi_0)$. Since $\mathcal N_0 \ast \mathcal N_0 \subseteq L^1(\Xi)$ densely, we see that (using that $\mathcal C_1^p(\varphi_0)$ is an essential $L^1$ module) $\mathcal N_0 \ast \mathcal N_0 \ast \mathcal C_1^p(\varphi_0)$ is indeed dense in $\mathcal C_1^p(\varphi_0)$. But 
    \begin{align*} 
    \mathcal N_0 \ast \mathcal N_0 \ast \mathcal C_1^p(\varphi_0) \subseteq \mathcal N_0 \ast \BUC(\Xi),
    \end{align*}
    showing that $\mathcal N_0 \ast \BUC(\Xi)$ is also dense in $\mathcal C_1^p(\varphi_0)$.
\end{proof}

In particular, $\operatorname{span}\{ (\varphi \otimes \psi) \ast f: ~\varphi, \psi \in W_{\varphi_0}^1, ~f \in \BUC(\Xi)\}$ is a dense subspace of $\mathcal C_1^p(\varphi_0)$. Having this fact at hand, one can prove the following exactly as in \cite[Proposition 3.5]{Fulsche_Luef_Werner2024}.

\begin{proposition} \label{prop:strongly_continuous_extension}
    Let $A \in \mathcal C_1^p(\varphi_0)$. Then, the map $\sigma \Xi \ni x \mapsto \alpha_x(A)$ is continuous in strong operator topology.
\end{proposition}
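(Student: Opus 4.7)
The proof proceeds by a two-stage density argument that reduces everything to the concrete class of operators identified after Proposition \ref{prop:C1N0}.

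First, I would verify the strong continuity for operators of the form $A = (\varphi \otimes \psi) \ast f$ with $\varphi, \psi \in W_{\varphi_0}^1$ and $f \in \BUC(\Xi)$. For $x \in \Xi$ one checks directly from $V_x V_y = m(x,y) V_{xy}$ that $\alpha_x((\varphi \otimes \psi) \ast f) = (\varphi \otimes \psi) \ast \alpha_x(f)$, and this identity extends to $x \in \sigma \Xi$ by the weak$^*$-continuity of both sides. For $g$ in a dense subspace of $W_{\varphi_0}^p$ on which $y \mapsto \sp{g}{V_y \varphi}$ is integrable, the resulting operator admits the Bochner-integral representation
\[\alpha_x(A) g = \int_\Xi \alpha_x(f)(y)\, \sp{g}{V_y \varphi}\, V_y \psi \, \mathrm{d}y.\]
Given a net $x_\lambda \to x$ in $\sigma \Xi$, the compact-open continuity of $z \mapsto \alpha_z(f)$ for $f \in \BUC(\Xi)$ combined with the uniform bound $\| \alpha_z(f)\|_\infty \leq \| f\|_\infty$ lets me split $\alpha_{x_\lambda}(A) g - \alpha_x(A) g$ into a piece on a large compact subset of $\Xi$ (handled by uniform convergence) and a tail (handled by the integrability of $y \mapsto \sp{g}{V_y \varphi}$); dominated convergence then gives norm convergence in $W_{\varphi_0}^p$.

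Second, I would upgrade this to all $g \in W_{\varphi_0}^p$ by a $3\varepsilon$-argument exploiting the uniform bound $\| \alpha_z(A)\|_{op} \leq \| A\|_{op}$ from Lemma \ref{lem:limit_operator_properties}(ii). An analogous $3\varepsilon$-argument in the other variable then handles arbitrary $A \in \mathcal C_1^p(\varphi_0)$: by Proposition \ref{prop:C1N0} pick $B$ in the dense subspace with $\| A - B\|_{op} < \varepsilon$, and estimate
\[\norm{\alpha_{x_\lambda}(A) g - \alpha_x(A) g} \leq 2 \varepsilon \norm{g} + \norm{\alpha_{x_\lambda}(B) g - \alpha_x(B) g},\]
where the second term tends to $0$ by the previous step.

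The main obstacle I anticipate is justifying the integral representation of $\alpha_x(A) g$ for $x \in \partial \Xi$, since there $\alpha_x(f)$ is only defined as a weak$^*$-limit in $L^\infty(\Xi)$ rather than by a pointwise formula. Restricting first to $g$ in a dense subspace on which the vector-valued integrand is absolutely integrable, and then checking that the resulting operator coincides with the one supplied by trace duality, is the key technical point; it amounts to a mapping property of the wavelet-type transform $g \mapsto \sp{g}{V_{\cdot} \varphi}$ with window $\varphi \in W_{\varphi_0}^1$, which should be tractable via Lemma \ref{lemma:projection_wiener_algebra} and Young's inequality. Once this is in place, the rest is a routine density bootstrap mirroring \cite[Proposition 3.5]{Fulsche_Luef_Werner2024}.
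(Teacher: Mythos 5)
Your proposal is correct and follows essentially the same route as the paper, which simply invokes the density statement of Proposition \ref{prop:C1N0} together with the compact-open continuity of $x \mapsto \alpha_x(f)$ for $f \in \BUC(\Xi)$ and defers the details to \cite[Proposition 3.5]{Fulsche_Luef_Werner2024}. Your identification of the key technical point (realizing $\alpha_x((\varphi\otimes\psi)\ast f)$ as $(\varphi\otimes\psi)\ast\alpha_x(f)$ for boundary points $x$ and justifying the Bochner-integral representation on a dense set of vectors) matches the argument carried out there.
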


Note that, similarly to the equality in Proposition \ref{prop:C1N0}, one can prove:

\begin{proposition}\label{char:KCop}
    For each $1 < p < \infty$ it is $\overline{\mathcal N_0 \ast C_0(\Xi)} = \mathcal K(W_{\varphi_0}^p)$. 
\end{proposition}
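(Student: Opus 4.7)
The plan is to mirror the proof of Proposition \ref{prop:C1N0} essentially verbatim, with $C_0(\Xi)$ replacing $\BUC(\Xi)$ and $\mathcal K(W_{\varphi_0}^p)$ replacing $\mathcal C_1^p(\varphi_0)$. For the easy inclusion $\mathcal N_0 \ast C_0(\Xi) \subseteq \mathcal K(W_{\varphi_0}^p)$, I would first use the bound $\|A \ast g\|_{op} \leq \|A\|_{\mathcal N}\|g\|_\infty$ together with approximation in sup-norm by continuous functions of compact support to reduce to the case where $f$ has compact support $K$. For such $f$ one has $A \ast f = \int_K f(x)\alpha_x(A)\, \mathrm{d}x$ as a Bochner integral of a norm-continuous $\mathcal N(W_{\varphi_0}^p)$-valued map over a compact set. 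Since $\mathcal K(W_{\varphi_0}^p)$ is norm-closed and already contains every nuclear operator, the integral itself lies in $\mathcal K(W_{\varphi_0}^p)$, so $\overline{\mathcal N_0 \ast C_0(\Xi)} \subseteq \mathcal K(W_{\varphi_0}^p)$.

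For the reverse inclusion, the strategy is identical to that of Proposition \ref{prop:C1N0}: once we know $\mathcal N_0 \ast \mathcal K(W_{\varphi_0}^p) \subseteq C_0(\Xi)$, we chain two convolutions to get $\mathcal N_0 \ast \mathcal N_0 \ast \mathcal K(W_{\varphi_0}^p) \subseteq \mathcal N_0 \ast C_0(\Xi)$. Using that $\mathcal N_0 \ast \mathcal N_0 \subseteq L^1(\Xi)$ densely (already exploited in Proposition \ref{prop:C1N0}) and the essential $L^1$-module structure of $\mathcal K(W_{\varphi_0}^p)$ (which is a closed submodule of $\mathcal C_1^p(\varphi_0)$ since $x \mapsto V_x K V_x^{-1}$ is norm-continuous whenever $K$ is compact), the left-hand side is dense in $\mathcal K(W_{\varphi_0}^p)$. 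Thus $\overline{\mathcal N_0 \ast C_0(\Xi)} \supseteq \mathcal K(W_{\varphi_0}^p)$.

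The main obstacle is establishing the key QHA input $\mathcal N_0 \ast \mathcal K(W_{\varphi_0}^p) \subseteq C_0(\Xi)$, i.e., that the Berezin-type transform of a compact operator vanishes at infinity. I would tackle this by density: the estimate $\|N \ast K\|_\infty \leq \|N\|_{\mathcal N}\|K\|_{op}$ together with norm-closedness of $C_0(\Xi)$ reduces matters to the case where $K$ is of finite rank, and by linearity to the rank-one case $K = \eta \otimes \zeta$ with $\eta, \zeta$ suitably smooth. Writing $N = \varphi \otimes \psi$ in $\mathcal N_0 = W_{\varphi_0}^1 \hat\otimes_\pi W_{\varphi_0}^1$, the function $N \ast K(x) = \tr(N\alpha_x\beta_-(K))$ unfolds into a product of matrix coefficients of the form $\langle V_x \cdot, \cdot\rangle$ between vectors drawn from $W_{\varphi_0}^1$; by the reproducing formula (Corollary \ref{cor:reproducing_formula}) these matrix coefficients themselves lie in $W_{\varphi_0}^1 \subseteq L^1(\Xi) \cap \BUC(\Xi)$, and any element of $L^1(\Xi) \cap \BUC(\Xi)$ on an lca group necessarily vanishes at infinity. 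This furnishes the $C_0$-decay and closes the argument.
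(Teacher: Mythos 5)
Your proof is correct and follows exactly the route the paper itself indicates: the paper omits a proof of this proposition, remarking only that it is obtained ``similarly to Proposition \ref{prop:C1N0}'' (as an instance of the correspondence theorem), and your argument is precisely that adaptation, with the easy inclusion via Bochner integration of nuclear operators and the reverse inclusion via the chain $\mathcal N_0 \ast \mathcal N_0 \ast \mathcal K \subseteq \mathcal N_0 \ast C_0(\Xi)$ together with the essential $L^1$-module structure. One small imprecision: the matrix coefficients $x \mapsto \langle u, V_x v\rangle$ with $u,v \in W_{\varphi_0}^1$ are cross-wavelet transforms and need not lie in $W_{\varphi_0}^1$ itself, but they do lie in $L^1(\Xi) \cap \BUC(\Xi) \subseteq C_0(\Xi)$, which is all your argument actually uses.
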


Indeed, both Propositions \ref{prop:C1N0} and \ref{char:KCop} are particular instances of the \emph{correspondence theorem}, which was described first in \cite{werner84} and was later adapted to $p$-Fock spaces (which essentially are the same as $W_{\varphi_0}^p$ for a certain representation of $\Xi = \mathbb R^{2n}$ and a Gaussian window function) in \cite{Fulsche2020, Fulsche2024} as well as to arbitrary abelian phase spaces (in the Hilbert space setting) in \cite{Fulsche_Luef_Werner2024}. It is no surprise that this result also for $W_{\varphi_0}^p$ on an arbitrary abelian phase space. But since we will make no use of this here, we defer from formulating the general correspondence theory. 

As a consequence of Proposition \ref{char:KCop} we get the compactness characterization of operators on $W_{\varphi_0}^p$ in terms of limit operators. We note that this is a generalization of \cite[Theorem 3.11]{Fulsche_Hagger2025}. The proof is not very different from the one in the setting of\cite{Fulsche_Hagger2025}. But since we ommitted the proof there, we decided to add the short proof here.

\begin{theorem} \label{thm:compactness_characterization}
    Let $A \in \mathcal L(W_{\varphi_0}^p)$. Then, we have $A \in \mathcal K(W_{\varphi_0}^p)$ if and only if $A \in \mathcal C_1^p(\varphi_0)$ and $\alpha_x(A) = 0$ for each $x \in \partial \Xi$. 
\end{theorem}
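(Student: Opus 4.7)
My plan is to handle each direction via the algebraic characterizations in Propositions \ref{char:KCop} and \ref{prop:C1N0} together with the weak-$\ast$ continuity of the limit objects and an approximate identity argument.

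\textbf{The ``only if'' direction.} Since $C_0(\Xi) \subseteq \BUC(\Xi)$, Propositions \ref{char:KCop} and \ref{prop:C1N0} immediately give the inclusion $\mathcal K(W_{\varphi_0}^p) \subseteq \mathcal C_1^p(\varphi_0)$. To show $\alpha_x(A) = 0$ on $\partial \Xi$, I use the norm bound in Lemma \ref{lem:limit_operator_properties}~(ii) to reduce, via linearity and density, to $A$ of the form $N \ast f$ with $N \in \mathcal N_0$ and $f \in C_0(\Xi)$. For every $M \in \mathcal N(W_{\varphi_0}^p)$ and every net $x_\lambda \to y \in \partial \Xi$, trace duality together with Fubini yields
\[\tr\!\left(M \alpha_{x_\lambda}(N \ast f)\right) = \int_\Xi \alpha_{x_\lambda}(f)(z)\, \tr(M \alpha_z(N))\, \mathrm{d}z,\]
and the inner factor $z \mapsto \tr(M \alpha_z(N))$ lies in $L^1(\Xi)$. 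Since $\alpha_{x_\lambda}(f) \to \alpha_y(f) = 0$ weakly-$\ast$ in $L^\infty(\Xi)$ (because $f \in C_0(\Xi)$ forces $f(x_\lambda^{-1}\cdot) \to 0$ pointwise and boundedly), the right-hand side tends to $0$. Hence $\tr(M \alpha_y(N \ast f)) = 0$ for every $M$, i.e., $\alpha_y(N \ast f) = 0$.

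\textbf{The ``if'' direction.} Now suppose $A \in \mathcal C_1^p(\varphi_0)$ and $\alpha_x(A) = 0$ for all $x \in \partial \Xi$. My key claim is that for every $N \in \mathcal N_0$ the function $N \ast A$ lies in $C_0(\Xi)$. Indeed, $(N \ast A)(x) = \tr(N \alpha_x(\beta_-(A)))$, and the relation $RV_xR = V_{x^{-1}}$ (a consequence of $RV_x = V_{x^{-1}}R$ together with $R^2 = I$) gives, by a direct computation, $\alpha_x(\beta_-(A)) = \beta_-(\alpha_{x^{-1}}(A))$ for all $x \in \Xi$. Since the inversion map extends to a self-homeomorphism of $\sigma \Xi$ carrying $\partial \Xi$ onto itself, this identity extends weakly-$\ast$ to $x \in \partial \Xi$, and the hypothesis forces $\alpha_x(\beta_-(A)) = 0$ there. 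The continuous extension of $N \ast A$ to $\sigma \Xi$ therefore vanishes on $\partial \Xi$, so $N \ast A \in C_0(\Xi)$.

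To conclude, I exploit that $\mathcal C_1^p(\varphi_0)$ is an essential $L^1(\Xi)$-module and that $\mathcal N_0 \ast \mathcal N_0 \subseteq L^1(\Xi)$ is dense (both recorded in the proof of Proposition \ref{prop:C1N0}). Given $\varepsilon > 0$, I can therefore find $e = M \ast N \in \mathcal N_0 \ast \mathcal N_0$ with $\| e \ast A - A\|_{op} < \varepsilon$. Associativity of the mixed QHA convolutions then yields
\[e \ast A = (M \ast N) \ast A = M \ast (N \ast A) \in \mathcal N_0 \ast C_0(\Xi) \subseteq \mathcal K(W_{\varphi_0}^p),\]
where the last inclusion is Proposition \ref{char:KCop}. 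Letting $\varepsilon \to 0$ gives $A \in \mathcal K(W_{\varphi_0}^p)$. The main technical obstacle will be the bookkeeping for the mixed convolutions (operator$\ast$operator producing a function versus function$\ast$operator producing an operator) and the identity $\alpha_x(\beta_-(A)) = \beta_-(\alpha_{x^{-1}}(A))$; both are standard in the QHA formalism referenced in the paper but require care in the present non-second-countable setting.
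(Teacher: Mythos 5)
Your proposal follows essentially the same route as the paper: the ``only if'' direction via Proposition \ref{char:KCop} and the vanishing of limit functions of $C_0$-symbols, and the ``if'' direction by showing $N \ast A \in C_0(\Xi)$ for every $N \in \mathcal N_0$ and then approximating $A$ in norm by $M \ast N \ast A$ using the density of $\mathcal N_0 \ast \mathcal N_0$ in $L^1(\Xi)$ together with a bounded approximate identity. The only (harmless) deviations are that you spell out the two ``readily verified''/``well-known'' intermediate facts explicitly; in your Fubini step you should test against $M \in \mathcal N_0$ rather than arbitrary nuclear $M$, so that $z \mapsto \tr(M\alpha_z(N))$ is guaranteed to lie in $L^1(\Xi)$, and then conclude for general $M$ by density of $\mathcal N_0$ in $\mathcal N(W_{\varphi_0}^p)$.
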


\begin{proof}
    Let $A \in \mathcal K(W_{\varphi_0}^p)$. Since $\alpha_x(f) = 0$ for each $f \in C_0(\Xi)$, which is readily verified, Proposition \ref{char:KCop} implies $A \in \mathcal C_1^p(\varphi_0)$ and $\alpha_x(A) = 0$ for each $x \in \partial \Xi$. 
    
    On the other hand, let $A \in \mathcal C_1^p(\varphi_0)$ such that $\alpha_x(A) = 0$ for each $x \in \partial \Xi$. Then, $N \ast A \in \BUC(\Xi)$ for each $N \in \mathcal N_0$ and $\alpha_x(N \ast A) = N \ast \alpha_x(A) = 0$ for each $x \in \partial \Xi$. This is well-known to be equivalent to $N \ast A \in C_0(\Xi)$ for each $N \in \mathcal N_0$. Hence, for each $M \in \mathcal N_0$, we have $M \ast N \ast A \in \mathcal K(W_{\varphi_0}^p)$. Since $\{ M \ast N: ~M, N \in \mathcal N_0\}$ is dense in $L^1(\Xi)$, which contains a bounded approximate identity, it is straightforward to show that $A$ can be approximated in operator norm by operators of the form $M \ast N \ast A$. This yields $A \in \mathcal K(W_{\varphi_0}^p)$. 
\end{proof}

\begin{remark}~ \label{rem:compactness_characterization}
    \begin{itemize}
        \item[(a)] Since $V_x \to 0$ weakly as $x \to \partial\Xi$, one can also see directly that $\alpha_x(A) = 0$ for every $A \in \Kc(W_{\varphi_0}^p)$ and $x \in \partial\Xi$. An alternative proof of Theorem \ref{thm:compactness_characterization} without using Proposition \ref{char:KCop} can then be obtained via Theorem \ref{thm:essential_norm} below.
        \item[(b)] Since $\Wc_{\varphi_0} \from \Co_p(U) \to W_{\varphi_0}^p$ is an isometry by definition, all results in this section also translate to limit operators on $\mathrm{Co}_p(U)$. To make this precise, define
        \[\alpha_x(A) := U_xAU_x^{-1}\]
        for $A \in \Lc(\mathrm{Co}_p(U))$ and $x \in \Xi$. Proposition \ref{prop:strongly_continuous_extension} then ensures that if $A \in \Cc_1^p := \Wc_{\varphi_0}^{-1} \Cc_1^p(\varphi_0)\Wc_{\varphi_0}$, then
        \[x \mapsto \alpha_x(A)\]
        extends to a strongly continuous map on $\sigma\Xi$. Moreover, Theorem \ref{thm:compactness_characterization} shows that $A \in \Lc(\mathrm{Co}_p(U))$ is compact if only if $A \in \mathcal C_1^p$ and $\alpha_x(A) = 0$ for each $x \in \partial \Xi$.
    \end{itemize}
\end{remark}

\section{Band-dominated operators}\label{sec:band_dominated}

In this section we provide the definition and some basic properties of band-dominated operators in our metric-free setting. We also show that for every $A \in \mathcal C_1^p(\varphi_0)$ the operator $AP_{\varphi_0} \from L^p(\Xi) \to L^p(\Xi)$ is band-dominated. This will allow us to to apply techniques from the theory of band-dominated operators to our operators $A \in \mathcal C_1^p(\varphi_0)$.

\begin{definition} \label{def:BDO_Xi}
    Let $\Xi$ be an abelian phase space as before and $K \subseteq \Xi$ compact. An operator $A \in \Lc(L^p(\Xi))$ is called a \emph{band operator of band-width at most $K$} if for all $Y \subseteq \Xi$ we have
    \[M_{\1_{\Xi \setminus YK}}AM_{\1_Y} = 0.\]
    The set of all band operators of band-width at most $K$ will be denoted by $\BO_K^p(\Xi)$. The union $\bigcup\limits_{K \subseteq \Xi \text{ compact}} \BO_K(\Xi)$ is denoted by $\BO^p(\Xi)$ and called the \emph{set of band operators}. The closure of $\BO(\Xi)$ with respect to the norm topology is denoted by $\BDO^p(\Xi)$ and its elements are called \emph{band-dominated operators}.
\end{definition}

\begin{remark}~
    \begin{itemize}
        \item[(a)] Definition \ref{def:BDO_Xi} is essentially a metric-free version of \cite[Definition 3.2]{HaggerSeifert}. That is, if we additionally assume that $\Xi$ is second countable, then it is properly metrizable (in the sense that every closed ball is compact) and our definition of band operators becomes equivalent to the usual definition (e.g.~\cite[Definition 3.2]{HaggerSeifert} or \cite[Definition 2.1.5]{Rabinovich_Roch_Silbermann2004}). The compact set $K$ can be chosen as a ball of a certain radius centred at $e$ and the smallest possible radius is then the band width of $A$ in the sense of \cite[Definition 3.2]{HaggerSeifert}. For instance, if $\Xi = \Z$ and $A \in \BO_K(\Z)$, this means that only the diagonals $k \in K$ in the matrix representation of $A$ can be non-zero, where $k = 0$ represents the main diagonal and so on.
        \item[(b)] The sets of band operators of different band-width are partially ordered by inclusion. That is, if $A$ is of band-width at most $K$ and $K \subseteq K'$, then is is also of band-width at most $K'$. It is therefore clear that in the definition of $\BO(\Xi)$ it suffices to take the union over those compact sets $K \subseteq \Xi$ that contain $e$ and satisfy $K = K^{-1}$. This will be useful for a few arguments.
        \item[(c)] Band-dominated operators can of course be defined without the Heisenberg multiplier $m$ and the corresponding representation $(U_x)_{x \in \Xi}$. That is, an abelian phase space is not necessary to do this and some of the results such as Proposition \ref{prop:BDO_algebraic_properties_Xi} below remain valid. However, as we want to combine the operator theoretic techniques with quantum harmonic analysis, we continue with Assumptions \ref{ass_1} and \ref{ass_2} from Section \ref{sec:coorbit}. This also means that $\Xi$ cannot be discrete unless $\Xi$ is finite. Similarly, if $\Xi$ is compact, then it is finite and therefore no interesting results are to be expected in that case.
    \end{itemize}
\end{remark}

$\BDO^p(\Xi)$ has some nice algebraic properties, which we will summarize in the next proposition.

\begin{proposition} \label{prop:BDO_algebraic_properties_Xi}
    $\BO^p(\Xi)$ is an algebra and $\BDO^p(\Xi)$ is a Banach algebra. Moreover, $(\BDO^p(\Xi))^* = \BDO^q(\Xi)$ for $\frac{1}{p} + \frac{1}{q} = 1$, In particular, $\BDO^2(\Xi)$ is a $C^*$-algebra.
\end{proposition}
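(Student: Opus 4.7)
The plan is to verify the three assertions in turn by tracking how the band-width propagates through the algebraic operations, noting that $\Xi$ being abelian gives $KL = LK$ and that images $K \mapsto K^{-1}$ and $K \cup L$, $KL$ of compact sets remain compact.

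For the algebraic structure I would first observe that $\BO_K^p(\Xi) + \BO_L^p(\Xi) \subseteq \BO_{K \cup L}^p(\Xi)$, which handles closure under linear combinations. For closure under composition, suppose $A \in \BO_K^p(\Xi)$ and $B \in \BO_L^p(\Xi)$. The defining identity for $B$ can be rewritten as $B M_{\1_Y} = M_{\1_{YL}} B M_{\1_Y}$ for every $Y \subseteq \Xi$, and then applying the defining identity for $A$ with the set $YL$ in place of $Y$ gives $A M_{\1_{YL}} = M_{\1_{YLK}} A M_{\1_{YL}}$. Composing these,
\[
AB \, M_{\1_Y} \;=\; M_{\1_{Y(LK)}} AB \, M_{\1_Y},
\]
so $AB \in \BO_{KL}^p(\Xi)$. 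Hence $\BO^p(\Xi)$ is an algebra, and since multiplication in $\Lc(L^p(\Xi))$ is jointly continuous in operator norm, the norm closure $\BDO^p(\Xi)$ is a Banach subalgebra.

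For the duality statement I would prove the core claim that $A \in \BO_K^p(\Xi)$ implies $A^\ast \in \BO_{K^{-1}}^q(\Xi)$. Taking adjoints of $M_{\1_{\Xi \setminus YK}} A M_{\1_Y} = 0$ yields $M_{\1_Y} A^\ast M_{\1_{\Xi \setminus YK}} = 0$. For arbitrary $Z \subseteq \Xi$, the elementary equivalence $Z \cap YK = \emptyset \iff Y \cap ZK^{-1} = \emptyset$ (valid in any group) shows that whenever $Y \subseteq \Xi \setminus ZK^{-1}$ we have $M_{\1_Y} A^\ast M_{\1_Z} = 0$. Choosing $Y = \Xi \setminus ZK^{-1}$ gives $M_{\1_{\Xi \setminus ZK^{-1}}} A^\ast M_{\1_Z} = 0$, establishing $A^\ast \in \BO_{K^{-1}}^q(\Xi)$. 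Since the adjoint map $\Lc(L^p(\Xi)) \to \Lc(L^q(\Xi))$ is an isometric isomorphism (reflexivity is used to identify $A^{\ast\ast}$ with $A$), this descends to norm closures and gives $(\BDO^p(\Xi))^\ast \subseteq \BDO^q(\Xi)$; the reverse inclusion follows by the symmetric argument applied to $A \in \BDO^q(\Xi)$.

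Finally, specializing to $p=q=2$, the adjoint preserves $\BDO^2(\Xi)$, so $\BDO^2(\Xi)$ is a norm-closed $\ast$-subalgebra of $\Lc(L^2(\Xi))$ and therefore automatically inherits the $C^\ast$-identity. The only real subtlety in the argument is the set-theoretic manipulation relating $Z \cap YK = \emptyset$ to $Y \cap ZK^{-1} = \emptyset$ in the adjoint step; once this is correctly formulated, everything else is a routine bookkeeping of supports and band-widths.
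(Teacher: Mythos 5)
Your proposal is correct and follows essentially the same route as the paper: sums are handled via the monotonicity of band-width in the compact set, products by composing band-widths (giving $\BO_K^p\cdot\BO_L^p\subseteq\BO_{KL}^p$), and the adjoint statement by the set-theoretic duality between $YK$ and $YK^{-1}$, which is exactly the paper's observation that $\bigl(\Xi\setminus(YK^{-1})\bigr)K\subseteq\Xi\setminus Y$ phrased as $Z\cap YK=\emptyset\iff Y\cap ZK^{-1}=\emptyset$. The only cosmetic difference is that you manipulate the multiplication-operator identities directly, whereas the paper tracks supports of $f$ and $Af$; both computations are identical in substance.
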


By $(\BDO^p(\Xi))^* = \BDO^q(\Xi)$ we mean that $A^* \in \BDO^q(\Xi)$ for any $A \in \BDO^p(\Xi)$. We thereby identify $L^q(\Xi)$ with $L^p(\Xi)^*$ via
\[g \mapsto \left(f \mapsto \int_{\Xi} f\overline{g} \, \mathrm{d}\lambda\right).\]

\begin{proof}
    As band operators are partially ordered by inclusion, it is clear that they form a vector space. Now let $K_1,K_2 \subseteq \Xi$ be compact and $A_j \in \BO_{K_j}^p(\Xi)$ for $j = 1,2$. For $Y \subseteq G$ and $f \in L^p(\Xi)$ with $\supp f \subseteq Y$ we get $\supp(A_1f) \in YK_1$ and thus $\supp(A_2A_1f) \in YK_1K_2$. It follows that $A_2A_1 \in \BO_{K_1K_2}^p(\Xi)$. Therefore $\BO^p(\Xi)$ is an algebra. Taking the closure thus yields a Banach algebra. Finally, assume $\frac{1}{p} + \frac{1}{q} = 1$, let $K \subseteq \Xi$ be compact and let $A \in \BO_K^p(\Xi)$. Note that $\big(\Xi \setminus (YK^{-1})\big)K \subseteq \Xi \setminus Y$ for every $Y \subseteq \Xi$. Using the duality between $L^p(\Xi)$ and $L^q(\Xi)$, we thus obtain
    \[0 = \sp{Af}{g} = \sp{f}{A^*g}\]
    for $f \in L^p(\Xi)$, $g \in L^q(\Xi)$ with $\supp f \subseteq \Xi \setminus (YK^{-1})$ and $\supp g \in Y$. In particular, $\supp(A^*g) \subseteq YK^{-1}$. Therefore $A^* \in \BO_{K^{-1}}(\Xi)$. This implies $(\BDO^p(\Xi))^* = \BDO^q(\Xi)$, which also shows that $\BDO^2(G)$ is a $C^*$-algebra.
\end{proof}

$\BDO^p(\Xi)$ is also stable under taking limit operators. In fact, this is even true for each $\BO_K(\Xi)$.

\begin{lemma} \label{lem:BO_limit_ops}
    Let $K \subseteq \Xi$ be compact and $A \in \BO_K(\Xi)$. Then $\alpha_x(A) \in \BO_K(\Xi)$ for all $x \in \sigma\Xi$.
\end{lemma}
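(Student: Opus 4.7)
The plan is to split the argument into two steps. Step 1 handles $x \in \Xi$ by a direct computation with the intertwining relations from Lemma~\ref{lem:commutation_properties_V_z}. Step 2 extends the identity to $x \in \partial\Xi$ via a weak$^\ast$ continuity argument against nuclear operators.

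For Step 1, observe that $\mathbf 1_S(x^{-1}\cdot) = \mathbf 1_{xS}$, so Lemma~\ref{lem:commutation_properties_V_z} yields
\[M_{\mathbf 1_{\Xi\setminus YK}}V_x = V_xM_{\mathbf 1_{\Xi\setminus x^{-1}YK}} \qquad\text{and}\qquad V_x^{-1}M_{\mathbf 1_Y} = M_{\mathbf 1_{x^{-1}Y}}V_x^{-1}.\]
Combining the two gives
\[M_{\mathbf 1_{\Xi\setminus YK}}\,\alpha_x(A)\,M_{\mathbf 1_Y} \;=\; V_x\,M_{\mathbf 1_{\Xi\setminus (x^{-1}Y)K}}\,A\,M_{\mathbf 1_{x^{-1}Y}}\,V_x^{-1},\]
and the inner factor vanishes because $A \in \BO_K(\Xi)$ applied to the subset $x^{-1}Y \subseteq \Xi$. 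Since $Y$ was arbitrary, $\alpha_x(A) \in \BO_K(\Xi)$.

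For Step 2, I would invoke the weak$^\ast$-continuous extension of $y \mapsto \alpha_y(\cdot)$ to $\sigma\Xi$ for operators on $L^p(\Xi)$. The construction from Section~\ref{sec:limit_ops}, although stated for operators on $W_{\varphi_0}^p$, carries over verbatim: it requires only that $y \mapsto V_y$ be strongly continuous on $L^p(\Xi)$ and that $L^p(\Xi)$ enjoy the bounded approximation property, both of which hold. Fix any nuclear operator $N \in \mathcal N(L^p(\Xi))$ and any net $(y_\lambda) \subseteq \Xi$ with $y_\lambda \to x$ in $\sigma\Xi$. The operator $\tilde N := M_{\mathbf 1_Y}NM_{\mathbf 1_{\Xi\setminus YK}}$ is again nuclear, and by cyclicity of the trace together with weak$^\ast$ continuity,
\begin{align*}
\tr\!\bigl(N\,M_{\mathbf 1_{\Xi\setminus YK}}\,\alpha_x(A)\,M_{\mathbf 1_Y}\bigr) &= \tr\!\bigl(\tilde N\,\alpha_x(A)\bigr) \\
&= \lim_\lambda \tr\!\bigl(\tilde N\,\alpha_{y_\lambda}(A)\bigr) \\
&= \lim_\lambda \tr\!\bigl(N\,M_{\mathbf 1_{\Xi\setminus YK}}\,\alpha_{y_\lambda}(A)\,M_{\mathbf 1_Y}\bigr) \;=\; 0,
\end{align*}
where the final equality uses Step~1. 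Trace duality then forces $M_{\mathbf 1_{\Xi\setminus YK}}\,\alpha_x(A)\,M_{\mathbf 1_Y} = 0$, and varying $Y$ gives $\alpha_x(A) \in \BO_K(\Xi)$.

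The main obstacle is bookkeeping rather than conceptual: one must justify that the weak$^\ast$ extension of the limit operator map is available for $A \in \mathcal L(L^p(\Xi))$ and not only for $A \in \mathcal L(W_{\varphi_0}^p)$. Once this is granted, the lemma reduces to a single algebraic identity combined with a routine passage to the weak$^\ast$ limit.
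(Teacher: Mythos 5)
Your proof is correct and follows essentially the same route as the paper: the conjugation identity $M_{\1_{\Xi\setminus YK}}\alpha_x(A)M_{\1_Y} = V_xM_{\1_{\Xi\setminus (x^{-1}Y)K}}AM_{\1_{x^{-1}Y}}V_x^{-1}$ for $x \in \Xi$, followed by a passage to the limit for $x \in \partial\Xi$. The paper compresses the second step into the phrase ``follows by taking limits''; your trace-duality argument against nuclear operators (and the observation that the weak$^\ast$ extension of $y \mapsto \alpha_y$ is equally available on $\mathcal L(L^p(\Xi))$) is exactly the bookkeeping that phrase hides.
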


\begin{proof}
    For $x \in \Xi$ this follows directly from
    \[\alpha_x(M_{\1_{\Xi \setminus YK}}AM_{\1_Y}) = M_{\1_{\Xi \setminus xYK}}\alpha_x(A)M_{\1_{xY}}\]
    (see Lemma \ref{lem:commutation_properties_V_z}). For $x \in \partial\Xi$ the same follows by taking limits.
\end{proof}

We will need the following crucial result, to merge the methods of band-dominated operators and quantum harmonic analysis.

\begin{theorem}\label{thm:c1_in_bdo}
    The following inclusion holds:
    \[\mathcal C_1^p(\varphi_0) \subseteq P_{\varphi_0} \BDO^p(\Xi) P_{\varphi_0} := \set{P_{\varphi_0}B|_{W^p_{\varphi_0}} : B \in \BDO^p(\Xi)}.\]
\end{theorem}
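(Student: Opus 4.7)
The plan is to combine the density $\mathcal{C}_1^p(\varphi_0) = \overline{\mathcal{N}_0 \ast \BUC(\Xi)}$ from Proposition \ref{prop:C1N0} with kernel estimates of the type used in Lemma \ref{lemma:projection_wiener_algebra}. Concretely, I would verify three things in turn: (i) $P_{\varphi_0}$ itself lies in $\BDO^p(\Xi)$; (ii) every generator $N \ast f$ with $N \in \mathcal{N}_0$ and $f \in \BUC(\Xi)$ can be written as $P_{\varphi_0} B|_{W_{\varphi_0}^p}$ for some $B \in \BDO^p(\Xi)$; and (iii) the set on the right-hand side of the claimed inclusion is norm-closed in $\mathcal{L}(W_{\varphi_0}^p)$.

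Step (i) follows by truncation. Lemma \ref{lemma:projection_wiener_algebra} gives a pointwise bound $|k(x,y)| \leq H(y^{-1}x)$ for the integral kernel of $P_{\varphi_0}$ with $H \in L^1(\Xi)$. For compact $K \subseteq \Xi$, the operator $B_K$ with kernel $k(x,y)\mathbf{1}_K(y^{-1}x)$ is a band operator whose band-width is controlled by $K$, while $P_{\varphi_0} - B_K$ has a convolution-type kernel bounded by $(H\mathbf{1}_{\Xi \setminus K})(y^{-1}x)$ and hence operator norm at most $\|H\mathbf{1}_{\Xi \setminus K}\|_1$. Letting $K$ exhaust $\Xi$ places $P_{\varphi_0}$ in $\BDO^p(\Xi)$.

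The main work is in step (ii). Writing $N = \sum_n \varphi_n \otimes \psi_n \in W_{\varphi_0}^1 \hat{\otimes}_\pi W_{\varphi_0}^1$ with $\sum_n \|\varphi_n\|_1 \|\psi_n\|_1 < \infty$, linearity reduces matters to a rank-one $N = \varphi \otimes \psi$. The identity
\[(N \ast f)g = \int_\Xi f(x) \langle g, V_x \psi \rangle \, V_x \varphi \, dx\]
extends to an operator $\tilde T \in \mathcal{L}(L^p(\Xi))$ with integral kernel $k(x,y) = \int_\Xi f(z) V_z \varphi(x) \overline{V_z \psi(y)} \, dz$. Using $|m| = 1$ and the change of variables $u = z^{-1}x$, one obtains $|k(x,y)| \leq \|f\|_\infty H(x^{-1}y)$ for an $H \in L^1(\Xi)$ with $\|H\|_1 = \|\varphi\|_1 \|\psi\|_1$ by Fubini and translation-invariance of Haar measure; as $\Xi$ is abelian, this is equivalent to a bound of the form $H'(y^{-1}x)$ with $H' \in L^1(\Xi)$. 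The truncation argument from (i) then yields $\tilde T \in \BDO^p(\Xi)$. Because $\varphi = P_{\varphi_0} \varphi$ and $P_{\varphi_0}$ commutes with every $V_z$ (Lemma \ref{lem:commutation_properties_V_z}), the range of $\tilde T$ lies in $W_{\varphi_0}^p$, so $P_{\varphi_0}\tilde T = \tilde T$, which gives $N \ast f = P_{\varphi_0} \tilde T|_{W_{\varphi_0}^p}$. Summing over $n$ with the absolutely convergent weights $\|f\|_\infty \|\varphi_n\|_1 \|\psi_n\|_1$ handles general $N \in \mathcal{N}_0$.

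Step (iii) is then routine. If $A_n = P_{\varphi_0} B_n|_{W_{\varphi_0}^p} \to A$ with $B_n \in \BDO^p(\Xi)$, the composites $A_n P_{\varphi_0} = P_{\varphi_0} B_n P_{\varphi_0}$ lie in $\BDO^p(\Xi)$ by (i) and the Banach algebra property (Proposition \ref{prop:BDO_algebraic_properties_Xi}), and they converge in operator norm on $L^p(\Xi)$ to $A P_{\varphi_0}$; closedness of $\BDO^p(\Xi)$ then places $A P_{\varphi_0}$ there, and $P_{\varphi_0}(A P_{\varphi_0})|_{W_{\varphi_0}^p} = A$. I expect the only real friction point to be the kernel estimate in (ii), where the cocycle $m$ must be tracked carefully through the substitution; once the translation-invariant bound with an $L^1$ profile is in hand, the argument for $\tilde T$ reduces cleanly to the one already used for $P_{\varphi_0}$.
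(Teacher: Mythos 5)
Your proposal is correct and follows essentially the same route as the paper: reduce via Proposition \ref{prop:C1N0} to rank-one generators $(\varphi\otimes\psi)\ast f$, show that the corresponding operator on $L^p(\Xi)$ has an integral kernel dominated by an $L^1$ convolution profile, and conclude with the truncation argument of Lemma \ref{lemma:wiener_in_bdo}. The only (harmless) deviation is that the paper additionally writes $\varphi = P_{\varphi_0}\varphi_1$ with $\varphi_1 \in C_c(\Xi)$ so that the inner operator becomes a genuine band operator sandwiched between two copies of $P_{\varphi_0}$, whereas you estimate the kernel directly for $\varphi,\psi \in W_{\varphi_0}^1 \subseteq L^1(\Xi)$ and note that the range already lies in $W_{\varphi_0}^p$; both arguments rest on the same kernel estimate, and your explicit step (iii) merely supplies the closedness of the right-hand side that the paper leaves implicit.
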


Note that this means that for every $A \in \mathcal C_1^p(\varphi_0)$ the operator $AP_{\varphi_0} \from L^p(\Xi) \to L^p(\Xi)$ is band-dominated since $P_{\varphi_0}$ is band-dominated as well by Corollary \ref{cor:P_BDO} below.

\begin{remark}
We conjecture that equality holds in Theorem \ref{thm:c1_in_bdo} just like it does for the Fock space; see \cite[Theorem 4.2]{hagger2021}. The main obstacle are explicit estimates on the kernel of $P_{\varphi_0}$ that are not available in the generality of the present paper.
\end{remark}

Proving Theorem \ref{thm:c1_in_bdo} will require several steps.

\begin{lemma}\label{lemma:wiener_in_bdo}
    Let $A  \in \mathcal L(L^p(\Xi))$ be an integral operator with integral kernel $k = k(x,y)$. If there exists $H \in L^1(\Xi)$ such that $|k(x,y)| \leq H(xy^{-1})$ for almost every $x, y \in \Xi$, then $A \in \BDO^p(\Xi)$.
\end{lemma}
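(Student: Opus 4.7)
The plan is to approximate $A$ in operator norm by band operators obtained by truncating the kernel along the ``diagonal direction'' $xy^{-1}$. The starting point I would use is a convolution-type bound: for any measurable kernel $\ell(x,y)$ with $|\ell(x,y)| \leq G(xy^{-1})$ for some $G \in L^1(\Xi)$, the induced integral operator $B$ satisfies $\|B\|_{L^p(\Xi) \to L^p(\Xi)} \leq \|G\|_1$. Indeed, using translation and inversion invariance of Haar measure on the lca group $\Xi$,
\[|(Bf)(x)| \leq \int_\Xi G(xy^{-1})|f(y)|\,\mathrm{d}y = (G \ast |f|)(x),\]
and Young's inequality yields $\|Bf\|_p \leq \|G\|_1 \|f\|_p$.

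Given $\epsilon > 0$, since $C_c(\Xi)$ is dense in $L^1(\Xi)$, I would pick a compact set $K_\epsilon \subseteq \Xi$ (say, the support of a $C_c$-approximation of $H$) such that $\int_{\Xi \setminus K_\epsilon} H\,\mathrm{d}\lambda < \epsilon$. Define the truncated operator $A_\epsilon$ via the kernel
\[k_\epsilon(x,y) := k(x,y)\1_{K_\epsilon}(xy^{-1}).\]
I claim $A_\epsilon \in \BO^p_{K_\epsilon}(\Xi)$: if $f$ is supported in $Y \subseteq \Xi$, then $(A_\epsilon f)(x)$ can only be nonzero if some $y \in Y$ satisfies $xy^{-1} \in K_\epsilon$, which forces $x \in yK_\epsilon \subseteq YK_\epsilon$; hence $M_{\1_{\Xi \setminus YK_\epsilon}} A_\epsilon M_{\1_Y} = 0$ as demanded by Definition \ref{def:BDO_Xi}.

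To close the argument, observe that $A - A_\epsilon$ has kernel $k(x,y)\1_{\Xi \setminus K_\epsilon}(xy^{-1})$, dominated in modulus by $(H\1_{\Xi \setminus K_\epsilon})(xy^{-1})$. Applying the opening bound with $G = H\1_{\Xi \setminus K_\epsilon}$ gives
\[\|A - A_\epsilon\|_{L^p(\Xi) \to L^p(\Xi)} \leq \|H\1_{\Xi \setminus K_\epsilon}\|_1 = \int_{\Xi \setminus K_\epsilon} H\,\mathrm{d}\lambda < \epsilon,\]
so $A$ is a norm limit of band operators and therefore lies in $\BDO^p(\Xi)$. No serious obstacle arises; the only point that demands some care, since $\Xi$ is not assumed $\sigma$-compact, is the existence of a compact set exhausting the $L^1$-mass of $H$, but this is immediate from the density of $C_c(\Xi)$ in $L^1(\Xi)$.
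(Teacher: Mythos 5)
Your proposal is correct and follows essentially the same route as the paper's proof: truncate the kernel to $k(x,y)\1_{K}(xy^{-1})$, observe that the truncation is a band operator of band-width at most $K$, and control the error via Young's inequality using $\|H\1_{\Xi\setminus K}\|_{1}<\epsilon$. You merely spell out the two steps the paper leaves as ``follows easily by definition,'' so there is nothing to add.
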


\begin{proof}
    Let $\varepsilon > 0$ and $K \subseteq \Xi$ compact such that $\| H \mathbf{1}_{K^c}\|_{L^1} < \varepsilon$. Then, Young's inequality implies
    \begin{align*}
        \| A - A_K\|_{op} \leq \| H \mathbf{1}_{K^c}\|_{L^1} < \varepsilon,
    \end{align*}
    where $A_K$ is the integral operator with kernel $k(x,y) \cdot \mathbf 1_{K}(xy^{-1})$. As $A_K \in \BO_K^p(\Xi)$ follows easily by definition, the result follows.  
\end{proof}

Combining this with Lemma \ref{lemma:projection_wiener_algebra}, we have the following immediate consequence.

\begin{corollary} \label{cor:P_BDO}
    For each $p$ we have $P_{\varphi_0} \in \BDO^p(\Xi)$.
\end{corollary}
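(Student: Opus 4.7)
The plan is to chain together the two preceding results. Lemma~\ref{lemma:projection_wiener_algebra} supplies a function $H \in L^1(\Xi)$ that dominates the integral kernel of $P_{\varphi_0}$ via $|k(x,y)| \leq H(y^{-1}x)$ for all $x,y \in \Xi$, realizing $P_{\varphi_0}$ as a bounded integral operator on $L^p(\Xi)$ for every $p \in [1,\infty]$. Lemma~\ref{lemma:wiener_in_bdo} then concludes membership in $\BDO^p(\Xi)$ as soon as one has an $L^1$ dominator of the form $H(xy^{-1})$.

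The only thing to observe is that since $\Xi$ is abelian, the expressions $y^{-1}x$ and $xy^{-1}$ coincide for all $x,y \in \Xi$, so the estimate provided by Lemma~\ref{lemma:projection_wiener_algebra} is literally an estimate of the form required by Lemma~\ref{lemma:wiener_in_bdo}. Applying the latter with the function $H$ from the former therefore yields $P_{\varphi_0} \in \BDO^p(\Xi)$ for every $p \in [1,\infty]$, with no further work required. There is no real obstacle here; the corollary is a direct synthesis of the two lemmas, and the only point worth flagging is the abelianness of $\Xi$, which reconciles the two translation conventions used in the statements.
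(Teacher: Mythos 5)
Your proposal is correct and is exactly the paper's argument: the corollary is stated as an immediate consequence of combining Lemma~\ref{lemma:projection_wiener_algebra} with Lemma~\ref{lemma:wiener_in_bdo}. Your observation that abelianness of $\Xi$ reconciles the $y^{-1}x$ versus $xy^{-1}$ conventions is a valid (and slightly more careful) point that the paper leaves implicit.
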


For certain integral operators it is easy to see that they are contained in $\BO^p(\Xi)$. Approximating by such operators will be the main step in the proof of Theorem \ref{thm:c1_in_bdo}.

\begin{lemma}\label{lemma:bdo}
    Let $\varphi, \psi \in C_c(\Xi)$ and $f \in L^\infty(\Xi)$. Then the operator $A$ with integral kernel
    \begin{align*}
        k(x,y) = \int_\Xi f(t) \frac{m(t^{-1},y)}{m(t^{-1}, x)} \varphi(xt^{-1}) \overline{\psi(yt^{-1})}~dt
    \end{align*}
    is contained in $\BO^p(\Xi)$.
\end{lemma}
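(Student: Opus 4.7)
The plan is to show directly that the kernel $k(x,y)$ vanishes outside a band of the form $\{(x,y) : xy^{-1} \in K\}$ for some compact $K \subseteq \Xi$, and simultaneously admits a compactly supported $L^1$ dominating function of $xy^{-1}$, so that Definition \ref{def:BDO_Xi} yields $A \in \BO_K^p(\Xi) \subseteq \BO^p(\Xi)$ once boundedness on $L^p(\Xi)$ is confirmed via Young's inequality.

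First, I would set $K_\varphi := \supp \varphi$ and $K_\psi := \supp \psi$, both compact since $\varphi,\psi \in C_c(\Xi)$. The integrand defining $k(x,y)$ is supported in $\{t : xt^{-1} \in K_\varphi\} \cap \{t : yt^{-1} \in K_\psi\} = xK_\varphi^{-1} \cap yK_\psi^{-1}$, and this intersection is empty unless $xy^{-1} \in K_\varphi K_\psi^{-1}$. Thus $k(x,y) = 0$ whenever $xy^{-1} \notin K := K_\varphi K_\psi^{-1}$, a compact subset of $\Xi$. This is exactly the band condition that forces $M_{\1_{\Xi\setminus YK}} A M_{\1_Y} = 0$ for every $Y \subseteq \Xi$.

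Next, since $|m(\cdot,\cdot)| = 1$, I would estimate
\[|k(x,y)| \leq \|f\|_\infty \int_\Xi |\varphi(xt^{-1})| |\psi(yt^{-1})| \, dt.\]
Using the translation- and inversion-invariance of the Haar measure on the lca group $\Xi$, the substitution $s = xt^{-1}$ (so that $t^{-1} = sx^{-1}$ and $yt^{-1} = (yx^{-1})s$) turns this into
\[|k(x,y)| \leq \|f\|_\infty \int_\Xi |\varphi(s)| |\psi((yx^{-1})s)| \, ds =: \|f\|_\infty\, H(xy^{-1}),\]
where $H(z) := \int |\varphi(s)| |\psi(z^{-1}s)| ds$ depends only on $xy^{-1}$. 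Because $\varphi$ and $\psi$ are continuous with compact support, $H$ is continuous with compact support, hence in $L^1(\Xi)$. Young's inequality then gives $A \in \Lc(L^p(\Xi))$ with $\|A\|_{\mathrm{op}} \leq \|f\|_\infty \|H\|_{L^1}$, exactly as in Lemma \ref{lemma:wiener_in_bdo}.

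Combining boundedness with the support property of the kernel yields $A \in \BO_K^p(\Xi)$, and hence $A \in \BO^p(\Xi)$. I do not anticipate a real obstacle here: the only subtle point is the Haar-measure substitution, which is standard in the lca setting, and the need to verify that the band width $K$ does not need to be symmetric or contain $e$ — by Remark (b) following Definition \ref{def:BDO_Xi}, one may freely enlarge $K$ to $K \cup K^{-1} \cup \{e\}$ without affecting the conclusion.
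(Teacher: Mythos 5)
Your proof is correct and follows essentially the same route as the paper: bound $\abs{k(x,y)}$ by $\norm{f}_\infty$ times a compactly supported $L^1$ function of $xy^{-1}$, observe that the kernel itself vanishes off the band $\supp\varphi\,(\supp\psi)^{-1}$, and invoke Young's inequality for boundedness. If anything, your version is slightly cleaner in that it concludes $A \in \BO_K^p(\Xi)$ directly from the support of the kernel (no truncation needed), whereas the paper's last line only invokes Lemma \ref{lemma:wiener_in_bdo} to get $\BDO^p(\Xi)$ even though its own estimate already yields the stronger band property claimed in the statement.
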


\begin{proof}
    Without loss of generality, we may assume that $\operatorname{supp}(\varphi)$ and $\operatorname{supp}(\psi)$ are both contained in the same compact set $K \subseteq \Xi$. It follows
    \begin{align*}
        |k(x,y)| &\leq  \| f\|_\infty \int_\Xi |\varphi(xt^{-1}) | |\psi(yt^{-1})| ~dt\\
        &= \| f\|_\infty \int_{K} |\varphi(xy^{-1}t)| |\psi(t)|~dt
    \end{align*}
    For $xy^{-1} \not\in KK^{-1}$, the expression on the right-hand side equals zero. For $xy^{-1} \in KK^{-1}$, the expression on the right-hand side is bounded by $\| f\|_\infty \|\varphi\|_{L^2} \| \psi\|_{L^2}$, hence $|k(x,y)| \leq H(xy^{-1})$, where $H(z) = 0$ for $z \not \in KK^{-1}$ and otherwise $\| f\|_\infty \| \varphi\|_{L^2} \| \psi\|_{L^2}$, which is a function in $L^1(\Xi)$. In particular, $A \in \BDO^p(\Xi)$ by Lemma \ref{lemma:wiener_in_bdo}.
\end{proof}

Now we are ready to prove Theorem \ref{thm:c1_in_bdo}.

\begin{proof}[Proof of Theorem \ref{thm:c1_in_bdo}]
    By Proposition \ref{prop:C1N0}, it suffices to prove that
    \[(\varphi \otimes \psi) \ast f \in P_{\varphi_0}\BDO^p(\Xi)P_{\varphi_0}\]
    for $\varphi,\psi \in \mathcal W_{\varphi_0}(\Co_1(U))$, $f \in \BUC(\Xi)$. Since $\mathcal W_{\varphi_0}(\Co_1(U)) = P_{\varphi_0}L^1(\Xi)$, we may assume that $\varphi = P_{\varphi_0} \varphi_1$ and $\psi = P_{\varphi_0} \psi_1$ for some $\varphi_1, \psi_1 \in C_c(\Xi)$. For $a \in \mathcal W_{\varphi_0}(\Co_p(U))$ we have
    \begin{align*}
        \big((P_{\varphi_0} \varphi_1 \otimes P_{\varphi_0} \psi_1) \ast f\big) a &= \int_{\Xi} f(z) V_z  P_{\varphi_0}  \varphi_1 \langle a,  V_z  P_{\varphi_0} \psi_1\rangle~dz\\
        &= P_{\varphi_0} \int_\Xi f(z) (V_z \varphi_1)\otimes (V_z\psi_1)~dz  P_{\varphi_0} a
    \end{align*}
    by Lemma \ref{lem:commutation_properties_V_z}. Now, as an element of $\mathcal L(L^p(\Xi))$ the integral
    \begin{align*}
        \int_\Xi f(z) (V_z \varphi_1) \otimes (V_z \psi_1)~dz
    \end{align*}
    can be expressed for $h \in L^p(\Xi)$ as:
    \begin{align*}
        \left(\int_\Xi f(z) (V_z \varphi_1) \otimes (V_z \psi_1)~dz \; h\right)(x) &= \int_\Xi f(z) V_z \varphi_1 \langle h, V_z \psi_1\rangle ~dz \\
        &= \int_\Xi \int_\Xi h(y) \frac{m(z^{-1}, y)}{m(z^{-1}, x)} f(z) \varphi_1(xz^{-1}) \overline{\psi_1(yz^{-1})}~dz~dy.
    \end{align*}
    By Lemma \ref{lemma:bdo}, this operator is in $\BO^p(\Xi)$, which finishes the proof.
\end{proof}

\section{Operator norm localization for band-dominated operators}\label{sec:norm_localization}

The main purpose of this section is to show that the essential norm of an operator $T \in \Cc_1^p(\varphi_0)$ is equivalent to $\sup\limits_{x \in \partial\Xi} \norm{\alpha_x(T)}$.

For $T \in \Lc(L^p(\Xi))$ and $Y \subseteq \Xi$ we define the localized operator norm as
\[\vertiii{T}_Y := \sup\set{\norm{Tf}_p : \norm{f}_p = 1, \supp f \subseteq x Y \text{ for some } x \in \Xi}.\]
Clearly, we always have $\vertiii{T}_Y \leq \norm{T}$ and $\vertiii{T}_\Xi = \norm{T}$, but the next proposition shows that a compact set $Y \subseteq \Xi$ can be chosen in such a way that $\vertiii{T}_Y$ gets arbitrarily close to $\norm{T}$.

\begin{proposition} \label{prop:ONL}
    Every abelian phase space $\Xi$ has the operator norm localization property. That is, for all $K \subseteq \Xi$ compact and $c \in (0,1)$ there is a compact subset $Y \subseteq \Xi$ such that for all $T \in \BO_K^p(\Xi)$ we have $\vertiii{T}_Y \geq c\norm{T}$. Moreover, we may assume that this $Y$ satisfies $Y = Y^{-1}$ and $e \in Y$.
\end{proposition}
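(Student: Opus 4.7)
My plan is to prove the operator norm localization property via an averaging argument over translates of a F\o lner-type compact subset of $\Xi$, exploiting that every abelian lca group is amenable. As a preliminary reduction, I would enlarge $K$ to $K \cup K^{-1} \cup \set{e}$; this enlarges $\BO_K^p(\Xi)$, so it suffices to prove the bound with this symmetric enlargement, and I may henceforth assume $K = K^{-1}$ and $e \in K$.

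Next, by amenability of $\Xi$ (if $\Xi$ is not $\sigma$-compact, one first restricts to the compactly generated $\sigma$-compact open subgroup containing $K$, on which the standard F\o lner property holds), I would choose a compact $F \subseteq \Xi$ whose \emph{inner $K$-core}
\[F_{-K} := \set{y \in F : yK \subseteq F}\]
satisfies $\abs{F_{-K}}/\abs{F} \geq c^p$. This is equivalent, modulo the symmetry of $K$ and $e \in K$, to the usual F\o lner condition $\abs{KF \triangle F} < \varepsilon \abs{F}$.

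The heart of the argument is then an averaging computation. For $f \in L^p(\Xi)$ with $\norm{f}_p = 1$, set $f_x := \1_{xF} f$, supported in $xF$. Haar invariance gives $\int_\Xi \norm{f_x}_p^p \, dx = \abs{F}$. The band hypothesis forces $T f_x(y) = Tf(y)$ whenever $y \in xF_{-K}$: indeed, $Tf - Tf_x = T(\1_{(xF)^c} f)$ is supported in $(xF)^c K$, while $y \in xF_{-K}$ means $yK \subseteq xF$, and hence $y \notin (xF)^c K$ (using $K = K^{-1}$). A Fubini computation then yields
\[\int_\Xi \norm{Tf_x}_p^p \, dx \;\geq\; \int_\Xi \abs{Tf(y)}^p \abs{\set{x : y \in xF_{-K}}} \, dy \;=\; \abs{F_{-K}} \norm{Tf}_p^p.\]
On the other hand, since $\supp f_x \subseteq xF$, the definition of $\vertiii{T}_F$ gives $\norm{Tf_x}_p \leq \vertiii{T}_F \norm{f_x}_p$. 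Combining the two bounds, integrating the $p$-th power, and taking the supremum over $f$ yields
\[\norm{T} \;\leq\; \vertiii{T}_F \left( \frac{\abs{F}}{\abs{F_{-K}}} \right)^{1/p} \;\leq\; c^{-1} \vertiii{T}_F.\]
To obtain the additional symmetry $Y = Y^{-1}$ and $e \in Y$, I would then replace $F$ with $Y := F \cup F^{-1} \cup \set{e}$; the inclusion $F \subseteq Y$ forces $\vertiii{T}_Y \geq \vertiii{T}_F \geq c \norm{T}$.

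The main obstacle I expect is producing the F\o lner set with the \emph{inner-core} property $\abs{F_{-K}}/\abs{F} \geq c^p$ in a possibly non-$\sigma$-compact lca group: the standard F\o lner property is about the outer boundary $KF \setminus F$, and the passage to the inner boundary via $F \setminus F_{-K} = F \cap F^c K$ relies on the symmetry of $K$ and some care with Haar measure. A secondary technical issue is measurability of $x \mapsto \norm{Tf_x}_p$, which I would handle using continuity of $x \mapsto \1_{xF} f$ in $L^p$ (arranged by choosing $F$ whose boundary has Haar-measure zero, which is always possible).
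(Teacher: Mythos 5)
Your proposal is correct in substance and shares the paper's skeleton (amenability of $\Xi$, a F\o lner-type compact set, and the averaging identity $\int_\Xi \norm{\1_{xF}f}_p^p\,\mathrm{d}x = \lambda(F)\norm{f}_p^p$), but the key estimate is carried out differently. The paper splits $\1_{xF}Tf$ via the commutator $[M_{\1_{xF}},T]$, shows that this commutator factors through $M_{\1_{x(Y_0K^2\setminus Y_0)}}$, and obtains an \emph{additive} error $2\norm{T}\lambda(Y_0K^2\setminus Y_0)^{1/p}$, which then requires picking a near-maximizing $f$ and extracting a single good $x$. You instead observe that the band condition gives the \emph{exact} identity $T(\1_{xF}f)=Tf$ on the inner core $xF_{-K}$ (your verification via $K=K^{-1}$ is correct), integrate the pointwise bound $\norm{T(\1_{xF}f)}_p\le\vertiii{T}_F\norm{\1_{xF}f}_p$ over $x$, and arrive directly at the multiplicative bound $\norm{T}\le\vertiii{T}_F\bigl(\lambda(F)/\lambda(F_{-K})\bigr)^{1/p}$ valid for all $f$ simultaneously. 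This is arguably cleaner: no extremal selection, no commutator bookkeeping, and the constant is transparent.

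The one place where your sketch needs repair is exactly the obstacle you flag: the inner-core condition $\lambda(F_{-K})/\lambda(F)\ge c^p$ does \emph{not} follow from the outer F\o lner condition by the identity $F\setminus F_{-K}=F\cap F^cK$ alone, since this only yields $F\setminus F_{-K}\subseteq (FK\setminus F)K$, and $\lambda(SK)$ is not controlled by $\lambda(S)$ in a non-discrete group. The fix is the same device the paper uses: take a standard F\o lner set $Y_0$ with $\lambda(Y_0K\setminus Y_0)<\epsilon\lambda(Y_0)$ and put $F:=Y_0K$; then $Y_0\subseteq F_{-K}$ (every $y\in Y_0$ satisfies $yK\subseteq Y_0K=F$), so $\lambda(F_{-K})/\lambda(F)\ge \lambda(Y_0)/\lambda(Y_0K)\ge (1+\epsilon)^{-1}$. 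With this substitution, and with Tonelli justified by restricting to the $\sigma$-finite support of $\abs{Tf}^p$ (as the paper does), your argument goes through; no restriction to a $\sigma$-compact open subgroup is needed, since the F\o lner property of \cite[Theorem 4.13]{Paterson1988} is available for arbitrary amenable locally compact groups.
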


The proof is similar to \cite[Proposition 3.4]{Hagger_Lindner_Seidel2016}. The main ingredient is the fact that lca groups are amenable (see e.g.~\cite[Proposition 0.15]{Paterson1988}). We also note that this result is not surprising in view of \cite[Theorem A.2]{Fulsche_Hagger2025}, which shows that lca groups satisfy property A' introduced in \cite{HaggerSeifert}, which is very much related to the operator norm localization property (see \cite{HaggerSeifert} and \cite{Sako}).

\begin{proof}
    Without loss of generality we may assume that $K = K^{-1}$ and $e \in K$. As $\Xi$ is amenable, \cite[Theorem 4.13]{Paterson1988} shows that for every $\epsilon > 0$ there is a compact subset $Y_0 \subseteq \Xi$ with $\lambda(Y_0) > 0$ such that
    \begin{equation} \label{eq:ONL0}
        \frac{\lambda(Y_0K^2 \Delta Y_0)}{\lambda(Y_0)} < \epsilon.
    \end{equation}
    Here, $\Delta$ denotes the symmetric difference of two sets. For every $f \in L^p(\Xi)$ and $K_0 \subseteq G$ compact we have
    \begin{align*}
       \int_\Xi \norm{\1_{xK_0}f}_p^p \, \mathrm{d}x &= \int_\Xi \int_\Xi \1_{xK_0}(y)\abs{f(y)}^p \, \mathrm{d}y \, \mathrm{d}x\\
       &= \int_\Xi \int_\Xi \1_{yK_0^{-1}}(x)\abs{f(y)}^p \, \mathrm{d}y \, \mathrm{d}x. 
    \end{align*}
    As the integrand is supported on a $\sigma$-finite subset, Tonelli's theorem is applicable and we get
    \begin{equation} \label{eq:ONL1}
        \int_\Xi \norm{\1_{xK_0}f}_p^p \, \mathrm{d}x = \lambda(yK_0^{-1})\norm{f}_p^p = \lambda(K_0)\norm{f}_p^p.
    \end{equation}
    Applying this to $Tf$ and $K_0 := Y_0K$, we get
    \[\lambda(Y_0K)^{\frac{1}{p}}\norm{Tf}_p = \left(\int_\Xi \norm{\1_{xY_0K}Tf}_p^p \, \mathrm{d}x\right)^{\frac{1}{p}} = \norm{\norm{\1_{\cdot Y_0K} Tf}_p}_p.\]
    The triangular equality thus yields
    \begin{equation} \label{eq:ONL2}
        \lambda(Y_0K)^{\frac{1}{p}}\norm{Tf}_p \leq \left(\int_\Xi \norm{[M_{\1_{xY_0K}},T]f}_p^p \, \mathrm{d}x\right)^{\frac{1}{p}} + \left(\int_\Xi \norm{T(\1_{xY_0K}f)}_p^p \, \mathrm{d}x\right)^{\frac{1}{p}}.
    \end{equation}
    For $T \in \BO_K^p(\Xi)$ and $Y_1 \subseteq \Xi$ we now have
    \begin{align*}
        M_{\1_{Y_1K}}T - TM_{\1_{Y_1K}} &= M_{\1_{Y_1K}}TM_{\1_{Y_1K^2}} - TM_{\1_{Y_1K}}\\
        &= M_{\1_{Y_1K}}TM_{\1_{Y_1K^2}} - M_{\1_{Y_1K}}TM_{\1_{Y_1}} + M_{\1_{Y_1K}}TM_{\1_{Y_1}} - TM_{\1_{Y_1K}}\\
        &= M_{\1_{Y_1K}}T(M_{\1_{Y_1K^2}} - M_{\1_{Y_1}}) - TM_{\1_{Y_1K}}(M_{\1_{Y_1K^2}} - M_{\1_{Y_1}})\\
        &= (M_{\1_{Y_1K}}T - TM_{\1_{Y_1K}})M_{\1_{Y_1K^2 \setminus Y_1}}.
    \end{align*}
    Applying this to $Y_1 := xY_0$, we get the estimate
    \[\norm{[M_{\1_{xY_0K}},T]f}_p = \norm{[M_{\1_{xY_0K}},T]M_{\1_{x(Y_0K^2 \setminus Y_0)}}f}_p \leq 2\norm{T}\norm{\1_{x(Y_0K^2 \setminus Y_0)}f}_p.\]
    Therefore, applying \eqref{eq:ONL1} again, the first term in \eqref{eq:ONL2} can be estimated as
    \[\left(\int_\Xi \norm{[M_{\1_{xY_0K}},T]f}_p^p \, \mathrm{d}x\right)^{\frac{1}{p}} \leq 2\norm{T}\lambda(Y_0K^2 \setminus Y_0)^{\frac{1}{p}}\norm{f}_p.\]
    For $\delta \in (0,1)$ choose $f \in L^p(\Xi)$ such that $\norm{f}_p = 1$ and $(1 - \delta)\norm{T} \leq \norm{Tf}_p$. It follows
    \[(1 - \delta)\norm{T}\lambda(Y_0K)^{\frac{1}{p}} \leq 2\norm{T}\lambda(Y_0K^2 \setminus Y_0)^{\frac{1}{p}} + \left(\int_\Xi \norm{T(\1_{xY_0K}f)}_p^p \, \mathrm{d}x\right)^{\frac{1}{p}}\]
    or equivalently
    \[\underbrace{\left(1 - \delta - 2\frac{\lambda(Y_0K^2 \setminus Y_0)^{\frac{1}{p}}}{\lambda(Y_0K)^{\frac{1}{p}}}\right)^p}_{=:c^p}\lambda(Y_0K)\norm{T}^p \leq \int_\Xi \norm{T(\1_{xY_0K}f)}_p^p \, \mathrm{d}x.\]
    Now we apply \eqref{eq:ONL1} again and since $\norm{f}_p = 1$, we obtain
    \[c^p\norm{T}^p\int_\Xi \norm{\1_{xY_0K}f}_p^p \, \mathrm{d}x \leq \int_\Xi \norm{T(\1_{xY_0K}f)}_p^p \, \mathrm{d}x.\]
    In particular, there must be some $x \in \Xi$ with $\1_{xY_0K}f \neq 0$ such that
    \begin{equation} \label{eq:ONL3}
        \norm{T(\1_{xY_0K}f)}_p^p \geq c^p\norm{T}^p\norm{\1_{xY_0K}f}_p^p.
    \end{equation}
    By \eqref{eq:ONL0} we further know that
    \[\frac{\lambda(Y_0K^2 \setminus Y_0)}{\lambda(Y_0K)} \leq \frac{\lambda(Y_0K^2 \Delta Y_0)}{\lambda(Y_0)} < \epsilon.\]
    Choose $Y := Y_0K$. Then $\supp(\1_{xY_0K}f) \subseteq xY$ and since $\delta$ and $\epsilon$ can be chosen arbitrarily small, \eqref{eq:ONL3} implies the proposition.

    That we may choose $Y$ such that $Y = Y^{-1}$ and $e \in Y$ is clear by definition of $\vertiii{\cdot}_Y$.
\end{proof}

In what follows, we will need that we can approximate operator norms somewhat uniformly by the localized versions. This will then be applied to the family of limit operators, which satisfy the assumption in this corollary by Lemma \ref{lem:BO_limit_ops}.

\begin{corollary} \label{cor:ONL}
    Let $\epsilon > 0$ and assume that $(T_j)_{j \in \Jc} \in \BDO^p(\Xi)$ is a bounded family of band-dominated operators. Further assume that there exists a compact subset $K \subseteq \Xi$ and operators $B_j \in \BO_K^p(\Xi)$ such that $\sup\limits_{j \in \Jc} \norm{T_j-B_j} \leq \frac{\epsilon}{3}$. Then there is a compact subset $Y \subseteq \Xi$ such that for every $j \in \Jc$ we have $\vertiii{T_j}_Y \geq \norm{T_j} - \epsilon$. As in Proposition \ref{prop:ONL}, $Y$ can be chosen such that $Y = Y^{-1}$ and $e \in Y$.
\end{corollary}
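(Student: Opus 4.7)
The plan is to reduce the corollary to a single application of Proposition \ref{prop:ONL}, making essential use of the fact that that proposition already delivers one compact $Y$ which works uniformly for every band operator of band-width at most $K$. Let $M := \sup_{j \in \Jc} \norm{T_j}$, which is finite by hypothesis; if $M = 0$ the statement is trivial, so assume $M > 0$. I would first select a constant $c \in (0,1)$ (to be pinned down in the next step), apply Proposition \ref{prop:ONL} to $K$ and $c$ to obtain a compact symmetric $Y$ with $e \in Y$ satisfying $\vertiii{B}_Y \geq c\norm{B}$ for every $B \in \BO_K^p(\Xi)$, and then transfer this estimate from $B_j$ to $T_j$ via the assumed approximation $\norm{T_j - B_j} \leq \epsilon/3$.

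For the transfer step, observe that $\vertiii{\cdot}_Y$ is a seminorm dominated by the operator norm, hence $\vertiii{T_j - B_j}_Y \leq \epsilon/3$. Combined with $\norm{B_j} \geq \norm{T_j} - \epsilon/3$ and the conclusion of Proposition \ref{prop:ONL} applied to $B_j$, this gives
\[\vertiii{T_j}_Y \geq \vertiii{B_j}_Y - \vertiii{T_j - B_j}_Y \geq c\norm{B_j} - \epsilon/3 \geq c\,\norm{T_j} - \frac{c\epsilon}{3} - \frac{\epsilon}{3}.\]
The desired conclusion $\vertiii{T_j}_Y \geq \norm{T_j} - \epsilon$ is therefore equivalent to $(1-c)\norm{T_j} \leq \epsilon(2-c)/3$, and since $\norm{T_j} \leq M$ it suffices to pick $c$ so close to $1$ that $(1-c)M \leq \epsilon(2-c)/3$. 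Such a choice is possible: as $c \to 1^-$ the left-hand side tends to $0$ while the right-hand side tends to $\epsilon/3 > 0$. Concretely, $c := 1 - \frac{\epsilon}{3M + \epsilon}$ works. The symmetry and the condition $e \in Y$ then come for free from the corresponding clause of Proposition \ref{prop:ONL}.

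There is no real obstacle here beyond bookkeeping; the only slightly delicate point is that the parameter $c$ in Proposition \ref{prop:ONL} must be chosen \emph{after} $M$, $\epsilon$, and $K$ are fixed, but since none of these depend on $j$, the single compact set $Y$ produced by that proposition automatically handles the entire family $(T_j)_{j \in \Jc}$ simultaneously.
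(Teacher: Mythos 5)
Your proposal is correct and follows essentially the same route as the paper: a single application of Proposition \ref{prop:ONL} with a constant $c$ chosen close enough to $1$ (the paper takes $c = 1 - \tfrac{\epsilon}{3C}$ with $C = \sup_j \norm{B_j}$, you take $c = 1 - \tfrac{\epsilon}{3M+\epsilon}$ with $M = \sup_j \norm{T_j}$), followed by transferring the estimate from $B_j$ to $T_j$ via $\vertiii{T_j - B_j}_Y \leq \norm{T_j - B_j} \leq \epsilon/3$. The only differences are which uniform bound parametrizes $c$ and the resulting arithmetic, both of which check out.
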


\begin{proof}
    Set $C := \sup\limits_{j \in \Jc} \norm{B_j}$. By Proposition \ref{prop:ONL}, we can find a compact subset $Y \subseteq \Xi$ such that for every $j \in \Jc$ the inequality $\vertiii{B_j}_Y \geq (1 - \frac{\epsilon}{3C})\norm{B_j}$ is satisfied. It follows
    \[\vertiii{T_j}_Y \geq \vertiii{B_j}_Y - \tfrac{\epsilon}{3} \geq (1 - \tfrac{\epsilon}{C})\norm{B_j} - \tfrac{\epsilon}{3} \geq \norm{B_j} - \tfrac{2\epsilon}{3} \geq \norm{T_j} - \epsilon\]
    for every $j \in \Jc$.
\end{proof}

The following is our main result of this section. For $p = 2$ this follows by a simple $C^*$-argument; see \cite[Lemma 3.9]{Fulsche_Luef_Werner2024}. For $p \neq 2$ the conclusion is not so easy, but essentially follows the lines of \cite[Theorem 4.23]{HaggerSeifert}. Note that this also provides an alternative proof of Theorem \ref{thm:compactness_characterization} (cf.~Remark \ref{rem:compactness_characterization}(a)).

\begin{theorem} \label{thm:essential_norm}
    Let $\Xi$ be an abelian phase space. For $T \in \Cc_1^p(\varphi_0)$ we have
    \[\sup\limits_{x \in \partial \Xi} \norm{\alpha_x(T)} \leq \norm{T + \Kc(W^p_{\varphi_0})} \leq \norm{P_{\varphi_0}} \sup\limits_{x \in \partial \Xi} \norm{\alpha_x(T)}.\]
\end{theorem}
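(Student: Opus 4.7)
The plan is to prove the two inequalities separately.

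For the \emph{lower bound}, I would fix any $K \in \mathcal K(W^p_{\varphi_0})$ and $x \in \partial\Xi$. By Remark \ref{rem:compactness_characterization}(a) we have $\alpha_x(K) = 0$, so Lemma \ref{lem:limit_operator_properties} yields
\[\|\alpha_x(T)\| = \|\alpha_x(T + K)\| \leq \|T + K\|.\]
Taking the infimum over $K$ and the supremum over $x \in \partial\Xi$ gives the first inequality.

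For the \emph{upper bound}, set $\eta := \sup_{x \in \partial\Xi} \|\alpha_x(T)\|$. The plan is to extend $T$ to the ambient $L^p(\Xi)$ by $\tilde T := T P_{\varphi_0}$. Theorem \ref{thm:c1_in_bdo} together with Corollary \ref{cor:P_BDO} shows $\tilde T \in \BDO^p(\Xi)$, and because $V_x$ commutes with $P_{\varphi_0}$ (Lemma \ref{lem:commutation_properties_V_z}), $\alpha_x(\tilde T) = \alpha_x(T) P_{\varphi_0}$, hence $\|\alpha_x(\tilde T)\| \leq \|P_{\varphi_0}\| \eta$ for $x \in \partial\Xi$. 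For compact $L \subseteq \Xi$, the operator $K_L := \tilde T M_{\1_L}|_{W^p_{\varphi_0}}$ is compact by Corollary \ref{cor:multiplication_compact}, and using $P_{\varphi_0}|_{W^p_{\varphi_0}} = I$ one checks that $(T - K_L) f = \tilde T M_{\1_{L^c}} f$ for $f \in W^p_{\varphi_0}$, so
\[\|T + \mathcal K(W^p_{\varphi_0})\| \leq \|\tilde T M_{\1_{L^c}}\|_{\mathcal L(L^p(\Xi))}.\]
The remaining task is then to bound this norm by $\|P_{\varphi_0}\| \eta + \epsilon$ for sufficiently large compact $L$.

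To do so, I would fix $\epsilon > 0$, approximate $\tilde T$ by a band operator $B_0 \in \BO_{K_0}^p$ with $\|\tilde T - B_0\| < \delta$, and note that $\alpha_x(B_0) \in \BO_{K_0}^p$ (Lemma \ref{lem:BO_limit_ops}) with $\|\alpha_x(\tilde T) - \alpha_x(B_0)\| < \delta$ for all $x \in \sigma\Xi$. The family $\{\tilde T M_{\1_{L^c}}\}_L$ is uniformly approximated by $\{B_0 M_{\1_{L^c}}\}_L \subseteq \BO_{K_0}^p$, so Corollary \ref{cor:ONL} furnishes a single compact $Y \subseteq \Xi$ (with $Y = Y^{-1}$, $e \in Y$) for which $\vertiii{\tilde T M_{\1_{L^c}}}_Y \geq \|\tilde T M_{\1_{L^c}}\| - \epsilon$ uniformly in $L$. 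This reduces the estimation to controlling $\|\tilde T g\|$ for unit vectors $g \in L^p(\Xi)$ with $\supp g \subseteq zY \cap L^c$; and for $L$ large enough, any such $z$ must leave every prescribed compact subset. The intertwining $V_z^{-1} \tilde T V_z = \alpha_{z^{-1}}(\tilde T)$ (up to a modulus-one cocycle scalar) then rewrites $\|\tilde T g\| = \|\alpha_{z^{-1}}(\tilde T) h\|$ with $h := V_z^{-1} g$ supported in the fixed compact set $Y$ and of norm one.

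The \emph{main obstacle}, and where the $p = 2$ case admits the easier $C^*$-algebraic proof from \cite[Lemma 3.9]{Fulsche_Luef_Werner2024}, is the final passage to the limit: one must show that, as $z^{-1}$ accumulates on $\partial \Xi$, $\|\alpha_{z^{-1}}(\tilde T) h\| \leq \|P_{\varphi_0}\| \eta + O(\epsilon)$ uniformly in unit vectors $h$ supported in $Y$. I would argue by contradiction along a net, following \cite[Theorem 4.23]{HaggerSeifert}: if the bound fails for a net $(z_L, h_L)$, exploit reflexivity of $L^p$ (for $1 < p < \infty$) to extract $h_L \rightharpoonup h$ weakly in $L^p(Y)$ and $z_L^{-1} \to y^* \in \partial\Xi$ in $\sigma\Xi$; then combine the band-dominated approximation $\tilde T \approx B_0$ (which confines the outputs $\alpha_{z_L^{-1}}(B_0) h_L$ to the fixed compact spatial region $YK_0$) with the SOT continuity furnished by Proposition \ref{prop:strongly_continuous_extension} (extended to $\tilde T$ on $L^p(\Xi)$) to pass to the weak limit $\alpha_{z_L^{-1}}(\tilde T) h_L \rightharpoonup \alpha_{y^*}(\tilde T) h$, contradicting $\|\alpha_{y^*}(\tilde T)\| \leq \|P_{\varphi_0}\| \eta$.
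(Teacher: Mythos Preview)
Your overall architecture matches the paper's proof closely: the lower bound is identical, and for the upper bound you correctly extend to $\tilde T = TP_{\varphi_0} \in \BDO^p(\Xi)$, observe that $\tilde T M_{\1_L}$ is compact, apply Corollary~\ref{cor:ONL} to localize the norm of $\tilde T M_{\1_{L^c}}$ to a fixed compact window $Y$, and conjugate by $V_z$ to reduce to estimating $\|\alpha_{z^{-1}}(\tilde T)h\|$ with $\supp h \subseteq Y$, $\|h\|=1$, and $z^{-1}$ escaping to $\partial\Xi$. Up to notation this is exactly what the paper does.

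The gap is in your final step. Your proposed contradiction argument extracts a weakly convergent subnet $h_L \rightharpoonup h$ and a boundary point $y^\ast$, and claims $\alpha_{z_L^{-1}}(\tilde T)h_L \rightharpoonup \alpha_{y^\ast}(\tilde T)h$ contradicts $\|\alpha_{y^\ast}(\tilde T)\| \le \|P_{\varphi_0}\|\eta$. But weak convergence only gives the lower-semicontinuity inequality $\|\alpha_{y^\ast}(\tilde T)h\| \le \liminf \|\alpha_{z_L^{-1}}(\tilde T)h_L\|$, which points the \emph{wrong way}: it is perfectly compatible with $\|\alpha_{y^\ast}(\tilde T)h\| \le \|P_{\varphi_0}\|\eta$ and $\|\alpha_{z_L^{-1}}(\tilde T)h_L\| > \|P_{\varphi_0}\|\eta + \epsilon$ simultaneously. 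Confining the supports of the outputs to $YK_0$ does not help, since restriction to a fixed compact set in $L^p(\Xi)$ is not a compact embedding.

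The paper avoids this by not tracking the vectors $h_L$ at all. Since $\supp h \subseteq Y$ one has $\alpha_{z^{-1}}(\tilde T)h = \alpha_{z^{-1}}(T)\,P_{\varphi_0}M_{\1_Y}h$, hence it suffices to control the \emph{operator norm} $\|\alpha_{z^{-1}}(T)P_{\varphi_0}M_{\1_Y}\|$. Now $P_{\varphi_0}M_{\1_Y}$ is compact (Corollary~\ref{cor:multiplication_compact}) and $\alpha_{g_L}(T) \to \alpha_{x_0}(T)$ in the strong operator topology (Proposition~\ref{prop:strongly_continuous_extension}); multiplying a strongly convergent bounded net by a fixed compact operator upgrades the convergence to operator norm, so $\|\alpha_{g_L}(T)P_{\varphi_0}M_{\1_Y}\| \to \|\alpha_{x_0}(T)P_{\varphi_0}M_{\1_Y}\| \le \|P_{\varphi_0}\|\,\eta$, which is the desired contradiction. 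Replacing your weak-limit step with this observation fixes the proof.
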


\begin{proof}
As $\alpha_x(A) = 0$ for all $A \in \Kc(W^p_{\varphi_0})$ by Theorem \ref{thm:compactness_characterization} (see also Remark \ref{rem:compactness_characterization}) and $x \in \partial \Xi$, we get
\[\norm{T + A} \geq \norm{\alpha_x(T+A)} = \norm{\alpha_x(T)+\alpha_x(A)} = \norm{\alpha_x(T)}\]
by Lemma \ref{lem:limit_operator_properties}. This implies $\norm{T + \Kc(W^p_{\varphi_0})} \geq \sup\limits_{x \in \partial \Xi} \norm{\alpha_x(T)}$.

Let $\epsilon > 0$ and assume that
\[\norm{T + \Kc(W^p_{\varphi_0})} > \norm{P_{\varphi_0}} \sup\limits_{x \in \partial \Xi} \norm{\alpha_x(T)} + \epsilon.\]
Now note that
\begin{align*}
    \norm{T + \Kc(W^p_{\varphi_0})} &= \inf\limits_{A \in \Kc(W^p_{\varphi_0})} \sup\limits_{\substack{f \in W^p_{\varphi_0} \\ \norm{f}_p \leq 1}} \norm{(T+A)f}\\
    &= \inf\limits_{A \in \Kc(L^p(\Xi),W^p_{\varphi_0})} \sup\limits_{\substack{f \in W^p_{\varphi_0} \\ \norm{f}_p \leq 1}} \norm{(TP_{\varphi_0}+A)f}\\
    &\leq \inf\limits_{A \in \Kc(L^p(\Xi),W^p_{\varphi_0})} \sup\limits_{\substack{f \in L^p(\Xi) \\ \norm{f}_p \leq 1}} \norm{(TP_{\varphi_0}+A)f}\\
    &= \inf\limits_{A \in \Kc(L^p(\Xi),W^p_{\varphi_0})} \norm{TP_{\varphi_0}+A},
\end{align*}
hence $\inf\limits_{A \in \Kc(L^p(\Xi),W^p_{\varphi_0})} \norm{TP_{\varphi_0}+A} > \norm{P_{\varphi_0}} \sup\limits_{x \in \partial \Xi} \norm{\alpha_x(T)} + \epsilon$. As $TP_{\varphi_0}M_{\1_L} \in \Kc(L^p(\Xi),W^p_{\varphi_0})$ for each compact subset $L \subseteq \Xi$ by Corollary \ref{cor:multiplication_compact}, we arrive at
\[\norm{TP_{\varphi_0}M_{\1_{\Xi \setminus L}}} = \norm{TP_{\varphi_0} - TP_{\varphi_0}M_{\1_L}} > \norm{P_{\varphi_0}} \sup\limits_{x \in \partial \Xi} \norm{\alpha_x(T)} + \epsilon.\]
By Theorem \ref{thm:c1_in_bdo}, there is a band operator $B \in \BO_K^p(\Xi)$ such that $\norm{TP_{\varphi_0} - B} \leq \frac{\epsilon}{6}$. It follows that $\norm{TP_{\varphi_0}M_{\1_{\Xi \setminus L}} - BM_{\1_{\Xi \setminus L}}} \leq \frac{\epsilon}{6}$ for all $L \subseteq \Xi$. Moreover, $BM_{\1_{\Xi \setminus L}} \in \BO_K^p(\Xi)$ for all $L \subseteq \Xi$ by Proposition \ref{prop:BDO_algebraic_properties_Xi}. Corollary \ref{cor:ONL} now implies that there is a compact subset $Y \subseteq \Xi$ with $e \in Y$ such that for all $L \subseteq \Xi$ the inequality
\[\vertiii{TP_{\varphi_0}M_{\1_{\Xi \setminus L}}}_Y > \norm{P_{\varphi_0}} \sup\limits_{x \in \partial \Xi} \norm{\alpha_x(T)}  + \frac{\epsilon}{2}\]
is satisfied. In particular, for each compact $L \subseteq \Xi$ we find a $g_L \in \Xi$ such that
\begin{equation} \label{eq:essential_norm1}
    \norm{TP_{\varphi_0}M_{\1_{g_L^{-1}Y}}} \geq \norm{TP_{\varphi_0}M_{\1_{\Xi \setminus L}}M_{\1_{g_L^{-1}Y}}} > \norm{P_{\varphi_0}} \sup\limits_{x \in \partial \Xi} \norm{\alpha_x(T)}  + \frac{\epsilon}{2}.
\end{equation}
Consider the net $(g_L)$ indexed over all compact sets $L \subseteq \Xi$ (partially ordered by inclusion). As $\sigma\Xi$ is compact, $(g_L)$ has a convergent subnet converging to some $x_0 \in \sigma\Xi$. But $x_0$ cannot be contained in $\Xi$ because \eqref{eq:essential_norm1} requires
\[(\Xi \setminus L) \cap g_L^{-1}Y \neq \emptyset,\]
which implies $g_L(\Xi \setminus L) \cap Y \neq \emptyset$ for every compact $L \subseteq \Xi$. It follows $x_0 \in \partial\Xi$. We now take the limit $g_L \to x_0$ in \eqref{eq:essential_norm1}. We know from Proposition \ref{prop:strongly_continuous_extension} that $\alpha_{g_L}(T) \to \alpha_x(T)$ in the strong operator topology and from Corollary \ref{cor:multiplication_compact} that $P_{\varphi_0}M_{\1_Y}$ is compact. This implies that $\alpha_{g_L}(T)P_{\varphi_0}M_{\1_Y} \to \alpha_x(T)P_{\varphi_0}M_{\1_Y}$ in norm by a version of \cite[Theorem 1.1.3]{Rabinovich_Roch_Silbermann2004} for bounded nets. Combining this with $V_{g_L}^{-1}M_{\1_H}V_{g_L} = M_{\1_{g_L^{-1}H}}$ (Lemma \ref{lem:commutation_properties_V_z}) finally yields
\begin{align*}
      \norm{\alpha_{x_0}(T)P_{\varphi_0}M_{\1_Y}} &= \lim\limits_{g_L \to x_0} \norm{\alpha_{g_L}(T)P_{\varphi_0}M_{\1_Y}}\\
      &= \lim\limits_{g_L \to x_0} \norm{TP_{\varphi_0}M_{\1_{g_L^{-1}Y}}}\\
      &\geq \norm{P_{\varphi_0}} \sup\limits_{x_0 \in \partial \Xi} \norm{\alpha_x(T)}  + \frac{\epsilon}{2},  
\end{align*}
which is obviously a contradiction.
\end{proof}

In case $\norm{P_{\varphi_0}} = 1$ we can even show that the supremum is attained.

\begin{theorem} \label{thm:norm_max}
    Let $\Xi$ be an abelian phase space, $T \in \Cc_1^p(\varphi_0)$ and assume that $\norm{P_{\varphi_0}} = 1$ (e.g.,~$p = 2$). Then
    \[\norm{T + \Kc(W^p_{\varphi_0})} = \max\limits_{x \in \partial \Xi} \norm{\alpha_x(T)}.\]
\end{theorem}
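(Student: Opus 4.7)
The equality $\norm{T+\Kc(W_{\varphi_0}^p)} = \sup_{x \in \partial\Xi} \norm{\alpha_x(T)} =: s$ is immediate from Theorem~\ref{thm:essential_norm} and $\norm{P_{\varphi_0}} = 1$, so only the attainment of the supremum is at stake.

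The plan is to first extract near-maximizers by a uniform operator norm localization. Since $TP_{\varphi_0} \in \BDO^p(\Xi)$ by Theorem~\ref{thm:c1_in_bdo}, it admits a band approximant $B \in \BO_K^p(\Xi)$, and $\alpha_\omega(B) \in \BO_K^p(\Xi)$ for all $\omega \in \partial\Xi$ by Lemma~\ref{lem:BO_limit_ops}. Since each $\alpha_\omega$ is contractive in operator norm, the family $\{\alpha_\omega(T)P_{\varphi_0} : \omega \in \partial\Xi\}$ of band-dominated operators is uniformly band-approximable; Corollary~\ref{cor:ONL} therefore supplies, for each $\epsilon > 0$, a single compact $Y_\epsilon \subseteq \Xi$ with
\[
\vertiii{\alpha_\omega(T)P_{\varphi_0}}_{Y_\epsilon} \geq \norm{\alpha_\omega(T)} - \epsilon \qquad \forall\, \omega \in \partial\Xi.
\]
A short computation based on Lemma~\ref{lem:commutation_properties_V_z} and the abelian composition $\alpha_x\alpha_\omega = \alpha_{x\omega}$ identifies $\norm{\alpha_\omega(T)P_{\varphi_0}M_{\1_{xY_\epsilon}}}$ with $\Psi_{Y_\epsilon}(x^{-1}\omega)$, where $\Psi_Y(\tilde\omega) := \norm{\alpha_{\tilde\omega}(T)P_{\varphi_0}M_{\1_Y}}$. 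Since $\Xi\omega \subseteq \partial\Xi$, taking the supremum over $\omega$ yields $\sup_{\tilde\omega \in \partial\Xi} \Psi_{Y_\epsilon}(\tilde\omega) \geq s - \epsilon$. By Proposition~\ref{prop:strongly_continuous_extension} together with the compactness of $P_{\varphi_0}M_{\1_{Y_\epsilon}}$ (Corollary~\ref{cor:multiplication_compact}), the function $\Psi_{Y_\epsilon}$ is continuous on the compact space $\partial\Xi$, so its maximum is attained at some $\omega_\epsilon^\ast \in \partial\Xi$, giving $\norm{\alpha_{\omega_\epsilon^\ast}(T)} \geq \Psi_{Y_\epsilon}(\omega_\epsilon^\ast) \geq s - \epsilon$.

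By compactness of $\partial\Xi$, pass to a subnet with $\omega_\epsilon^\ast \to \omega_\ast \in \partial\Xi$ as $\epsilon \to 0$; the target is $\norm{\alpha_{\omega_\ast}(T)} = s$. The main obstacle here is that $\omega \mapsto \alpha_\omega(T)$ is in general only SOT continuous, so $\omega \mapsto \norm{\alpha_\omega(T)}$ is only lower semi-continuous, which is the wrong direction to pass the bound $\norm{\alpha_{\omega_\epsilon^\ast}(T)} \geq s - \epsilon$ to the limit. To bypass this, I would use reflexivity of $W_{\varphi_0}^p$ together with the compactness of $P_{\varphi_0}M_{\1_{Y_\epsilon}}$ to attain $\Psi_{Y_\epsilon}(\omega_\epsilon^\ast)$ on a unit vector $g_\epsilon$ in the norm-compact set $\overline{P_{\varphi_0}M_{\1_{Y_\epsilon}}(\{h \in L^p(\Xi) : \norm{h}_p \leq 1\})} \subseteq W_{\varphi_0}^p$, and then combine $T^\ast \in \Cc_1^q(\varphi_0)$ (ensuring that both $\alpha_{\omega_\epsilon^\ast}(T)$ and its adjoint converge strongly) with a diagonal argument as $Y_\epsilon$ exhausts $\Xi$, in order to upgrade a weakly convergent subnet of $g_\epsilon$ to a norm-convergent one. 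The resulting limit $g_\ast$ would then satisfy $\norm{\alpha_{\omega_\ast}(T)g_\ast} \geq s$, which forces $\norm{\alpha_{\omega_\ast}(T)} = s$.
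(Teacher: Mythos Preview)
Your setup through the construction of $\omega_\epsilon^\ast$ is fine, but at that stage you have only recovered the tautology that for each $\epsilon>0$ there exists $\omega_\epsilon^\ast\in\partial\Xi$ with $\norm{\alpha_{\omega_\epsilon^\ast}(T)}\geq s-\epsilon$; all the content of the theorem lies in the passage to the limit, and that is where your argument breaks down.

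The proposed upgrade from weak to norm convergence of $g_\epsilon$ cannot work as stated. Your vectors satisfy $g_\epsilon\in P_{\varphi_0}M_{\1_{Y_\epsilon}}(\text{unit ball})$, but the localizing set $Y_\epsilon$ grows with $\epsilon\to 0$, so the $g_\epsilon$ do not live in any fixed norm-compact set; there is nothing preventing the near-maximizing mass from escaping to infinity along the growing $Y_\epsilon$. Neither reflexivity nor the strong convergence of $\alpha_{\omega_\epsilon^\ast}(T)^\ast$ helps here: with only weak convergence $g_\epsilon\rightharpoonup g_\ast$ and $\alpha_{\omega_\epsilon^\ast}(T)g_\epsilon\rightharpoonup \alpha_{\omega_\ast}(T)g_\ast$, the norm inequality goes the wrong way, exactly as you yourself noted for $\omega\mapsto\norm{\alpha_\omega(T)}$. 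The phrase ``diagonal argument as $Y_\epsilon$ exhausts $\Xi$'' does not resolve this, because the two limits ($\epsilon\to 0$ and exhaustion of $\Xi$) are coupled, not independent.

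What is missing is precisely a mechanism that decouples them: one needs, for each \emph{fixed} compact $Y$, a sequence $w_k\in\partial\Xi$ with $\norm{\alpha_{w_k}(T)P_{\varphi_0}M_{\1_Y}}\geq s-\delta(Y)$ for all large $k$, where $\delta(Y)\to 0$ as $Y$ exhausts $\Xi$. The paper achieves this by a recursive recentering: starting from an approximate maximizer localized in a large $Y_k$, one applies the uniform ONL estimate repeatedly to relocalize in $Y_{k-1},Y_{k-2},\ldots,Y_0$, each time absorbing the shift into the $\alpha$-argument. The outcome is a single sequence $(w_k)$ such that for \emph{every} fixed $j$ and all $k>j$ one has $\norm{\alpha_{w_k}(T)P_{\varphi_0}M_{\1_{Y_{j+1}}}}\geq \norm{\alpha_{x_k}(T)}-2^{-j+1}$. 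Now one can pass to a limit point $w\in\partial\Xi$ with $j$ held fixed (here the compactness of $P_{\varphi_0}M_{\1_{Y_{j+1}}}$ does the work), and only afterwards send $j\to\infty$. Your argument lacks an analogue of this nested construction, and without it the final step does not go through.
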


\begin{proof}
    In view of Theorem \ref{thm:essential_norm} it is clear that we only need to show that the supremum is a maximum. Let $(x_n)_{n \in \N}$ be a sequence in $\partial\Xi$ such that
    \begin{equation} \label{eq:norm_max0}
        \lim\limits_{n \to \infty} \norm{\alpha_{x_n}(T)} = \sup\limits_{x \in \partial \Xi} \norm{\alpha_x(T)}.
    \end{equation}
     Now note that $\set{\alpha_x(T)P_{\varphi_0}M_{\1_L} : x \in \sigma\Xi, L \subseteq \Xi \text{ Borel}}$ is a suitable family of band-dominated operators in the sense of Corollary \ref{cor:ONL}. Indeed, for every $\epsilon > 0$ we may choose an operator $B \in \BO_K^p(\Xi)$ such that $\norm{TP_{\varphi_0} - B} \leq \frac{\epsilon}{3}$. This implies
    \[\norm{\alpha_x(T)P_{\varphi_0}M_{\1_L} - \alpha_x(B)M_{\1_L}} \leq \norm{\alpha_x(T)P_{\varphi_0} - \alpha_x(B)} \leq \norm{TP_{\varphi_0} - B} \leq \frac{\epsilon}{3}\]
    for all $x \in \sigma\Xi$, $L \subseteq \Xi$. Moreover, $\alpha_x(B)M_{\1_L} \in \BO_K^p(\Xi)$ for all $x \in \sigma\Xi$, $L \subseteq \Xi$ by Proposition \ref{prop:BDO_algebraic_properties_Xi} and Lemma \ref{lem:BO_limit_ops}. Corollary \ref{cor:ONL} thus guarantees the existence of a compact set $Y \subseteq \Xi$ with $Y = Y^{-1}$ and $e \in Y$ and such that for every $x \in \sigma\Xi$ and every $L \subseteq \Xi$ the inequality
    \begin{equation} \label{eq:norm_max1}
        \vertiii{\alpha_x(T)P_{\varphi_0}M_{\1_L}}_Y \geq \norm{\alpha_x(T)P_{\varphi_0}M_{\1_L}} - \epsilon
    \end{equation}
    holds. Applying this to $\epsilon := 2^{-k}$, $k \in \N_0$, we obtain a sequence of compact subsets $Y_k \subseteq \Xi$ with $Y_k = Y_k^{-1}$ and $e \in Y_k$ such that \eqref{eq:norm_max1} holds. Additionally, we may assume, by increasing the size of the sets if necessary, that $\prod\limits_{l = 0}^k Y_l^2 \subseteq Y_{k+1}$ for all $k \in \N_0$. For each $k \in \N_0$ we may choose $\tilde{f}_{k,0} \in L^p(\Xi)$ with $\|\tilde{f}_{k,0}\| \leq 1$, $\supp \tilde{f}_{k,0} \subseteq \xi_{k,0}Y_k$ for some $\xi_{k,0} \in \Xi$ such that
    \begin{equation} \label{eq:norm_max2}
        \norm{\alpha_{x_k}(T)P_{\varphi_0}\tilde{f}_{k,0}} \geq \norm{\alpha_{x_k}(T)P_{\varphi_0}} - 2^{-k}.
    \end{equation}    
    Let $f_{k,0} := V_{\xi_{k,0}^{-1}}\tilde{f}_{k,0}$. Then $\supp f_{k,0} \subseteq Y_k$ and
    \[\norm{\alpha_{\xi_{k,0}^{-1}}\big(\alpha_{x_k}(T)\big)P_{\varphi_0}f_{k,0}} \geq \norm{\alpha_{x_k}(T)P_{\varphi_0}} - 2^{-k}.\]
    Let $z_{k,0} := \xi_{k,0}^{-1}x_k$. For $j = 1,\ldots,k$ we recursively choose the following:
    \begin{itemize}
        \item[(i)] $\tilde{f}_{k,j} \in L^p(\Xi)$ with $\|\tilde{f}_{k,j}\| \leq 1$ and $\supp \tilde{f}_{k,j} \subseteq (\xi_{k,j}Y_{k-j}) \cap Y_{k-j+1}$ for some $\xi_{k,j} \in \Xi$ such that
        \begin{equation} \label{eq:norm_max2.5}
            \norm{\alpha_{z_{k,j-1}}(T)P_{\varphi_0}M_{\1_{Y_{k-j+1}}}\tilde{f}_{k,j}} \geq \norm{\alpha_{z_{k,j-1}}(T)P_{\varphi_0}M_{\1_{Y_{k-j+1}}}} - 2^{-(k-j)},
        \end{equation}
        \item[(ii)] $f_{k,j} := V_{\xi_{k,j}^{-1}}\tilde{f}_{k,j}$, implying $\supp f_{k,j} \subseteq Y_{k-j}$,
        \item[(iii)] $z_{k,j} := \xi_{k,j}^{-1}z_{k,j-1}$.
    \end{itemize}
    This shows
    \begin{align*}
        \norm{\alpha_{z_{k,j}}(T)P_{\varphi_0}M_{\1_{Y_{k-j}}}} &\geq \norm{\alpha_{z_{k,j}}(T)P_{\varphi_0}M_{\1_{Y_{k-j}}}f_{k,j}} = \norm{\alpha_{z_{k,j-1}}(T)P_{\varphi_0}M_{\1_{\xi_{k,j}Y_{k-j}}}\tilde{f}_{k,j}}\\
        &= \norm{\alpha_{z_{k,j-1}}(T)P_{\varphi_0}M_{\1_{Y_{k-j+1}}}\tilde{f}_{k,j}} \geq \norm{\alpha_{z_{k,j-1}}(T)P_{\varphi_0}M_{\1_{Y_{k-j+1}}}} - 2^{-(k-j)}.
    \end{align*}
    In particular,
    \[\norm{\alpha_{z_{k,k-j}}(T)P_{\varphi_0}M_{\1_{Y_j}}} \geq \norm{\alpha_{z_{k,k-j-1}}(T)P_{\varphi_0}M_{\1_{Y_{j+1}}}} - 2^{-j}.\]
    Iterating this estimate and applying \eqref{eq:norm_max2}, we get
    \begin{align*}
        \norm{\alpha_{z_{k,k-j}}(T)P_{\varphi_0}M_{\1_{Y_j}}} &\geq \norm{\alpha_{z_{k,0}}(T)P_{\varphi_0}M_{\1_{Y_k}}} - 2^{-j} - \ldots - 2^{-k-1}\\
        &\geq \norm{\alpha_{z_{k,0}}(T)P_{\varphi_0}} - 2^{-j} - \ldots - 2^{-k-1} - 2^{-k}\\
        &\geq \norm{\alpha_{x_k}(T)P_{\varphi_0}} - 2^{-j+1}
    \end{align*}
    for $j < k$. Now consider $w_k := z_{k,k}$ for $k \in \N$. Note that
    \[z_{k,k-j} = \left(\prod\limits_{l = k-j+1}^k \xi_{k,l}\right)z_{k,k} = \left(\prod\limits_{l = k-j+1}^k \xi_{k,l}\right)w_k.\]
    By \eqref{eq:norm_max2.5}, we necessarily have $\xi_{k,j}^{-1} \in Y_{k-j}Y_{k-j+1}$, which implies that
    \[\left(\prod\limits_{l = k-j+1}^k \xi_{k,l}^{-1}\right)Y_j \subseteq \left(\prod\limits_{l = k-j+1}^k Y_{k-l}Y_{k-l+1}\right)Y_j = \left(\prod\limits_{l = 0}^{j-1} Y_lY_{l+1}\right)Y_j \subseteq Y_{j+1}.\]
    It follows
    \begin{equation} \label{eq:norm_max3}
        \norm{\alpha_{w_k}(T)P_{\varphi_0}M_{\1_{Y_{j+1}}}} \geq \norm{\alpha_{z_{k,k-j}}(T)P_{\varphi_0}M_{\1_{Y_j}}} \geq \norm{\alpha_{x_k}(T)P_{\varphi_0}} - 2^{-j+1}.
    \end{equation}
    As $\partial\Xi$ is compact, $(w_k)_{k \in \N}$ has a subnet $(w_{k_{\gamma}})$ that converges to some $w \in \partial\Xi$. Combining \eqref{eq:norm_max3} with \eqref{eq:norm_max0} and using $\norm{P_{\varphi_0}} = 1$ as well as the fact that $P_{\varphi_0}M_{\1_{Y_{j+1}}}$ is compact, we get
    \begin{align*}
        \norm{\alpha_w(T)} &\geq \norm{\alpha_w(T)P_{\varphi_0}M_{\1_{Y_{j+1}}}} = \lim\limits_{\gamma} \norm{\alpha_{w_{k_{\gamma}}}(T)P_{\varphi_0}M_{\1_{Y_{j+1}}}} \geq \lim\limits_{\gamma} \norm{\alpha_{x_{k_{\gamma}}}(T)P_{\varphi_0}} - 2^{-j+1}\\
        &= \sup\limits_{x \in \partial \Xi} \norm{\alpha_x(T)} - 2^{-j+1}.
    \end{align*}
    As this is true for every $j \in \N$, it follows that
    \[\norm{\alpha_w(T)} = \sup\limits_{x \in \partial \Xi} \norm{\alpha_x(T)}.\qedhere\]
\end{proof}

\section{Fredholm theory}\label{sec:Fredholm}

In this section we combine quantum harmonic analysis tools from \cite{Fulsche_Luef_Werner2024} with $\BDO$-techniques from \cite{HaggerSeifert} to obtain a Fredholm criterion for operators on $\Co_p(U)$ or $W_{\varphi_0}^p$, respectively. For the quantum harmonic analysis part it seems convenient to work with
\[\mathcal C_1^p = \{ A \in \mathcal L(\Co_p(U)): ~\| \alpha_x(A)-A\|_{op} \to 0 \text{ as } x \to 0\}\]
(cf.~Remark \ref{rem:compactness_characterization}(b)). Note that $\mathcal C_1^p \cong \mathcal C_1^p(\varphi_0)$ for each $0 \neq \varphi_1 \in \mathcal H_1$ as Banach algebras, with equivalent norms. We recall from Theorem \ref{thm:essential_norm} that
\[\| A + \mathcal K(\Co_p(U))\| \cong \sup_{x \in \partial \Xi} \| \alpha_x(A)\|.\]

As demonstrated in \cite{Fulsche_Luef_Werner2024} (the arguments presented there did not depend on the Hilbert space structure, which was assumed throughout that paper), for each $A \in \mathcal C_1^p$ one can extend the map
\begin{align*}
    \Xi \ni x \mapsto \alpha_x(A) \in \mathcal C_1^p,
\end{align*}
which is continuous with respect to the operator norm, to a map
\begin{align*}
    \sigma \Xi \ni x \mapsto \alpha_x(A) \in \mathcal C_1^p,
\end{align*}
which is continuous in the strong operator topology. We now make the following definition:
\begin{definition}
    A \emph{compatible family of limit operators}  is a map $\omega: \partial \Xi \to \mathcal C_1^p$ satisfying the following properties:
    \begin{enumerate}[(1)]
        \item $\omega$ is continuous in weak$^\ast$ topology (with respect to the duality $\mathcal L(\Co_p) \cong \mathcal N(\Co_p)$, the nuclear operators).
        \item For every $x \in \partial \Xi$ and $z \in \Xi$ it is $\alpha_z(\omega(x)) = \omega(\varsigma_{z^{-1}}(x))$. Here, $\varsigma_{z^{-1}}(x)$ is the multiplicative linear functional on $\operatorname{BUC}(\Xi)$ defined by $\varsigma_{z^{-1}}(x)(f) = x(\alpha_{z^{-1}}(f))$.
        \item The map $\omega$ is bounded, that is, $\sup\limits_{x \in \partial \Xi}\| \omega(x)\|_{op} < \infty$.
        \item The family $\{ \omega(x): ~x \in \partial \Xi\}$ is uniformly equicontinuous. Here, uniform equicontinuity of a family of operators $\mathcal F$ means that for each $\varepsilon > 0$ there exists a compact neighborhood $K \subseteq \Xi$ of the identity $e \in \Xi$ such that for each $z \in K$ and each $A \in \mathcal F$ we have $\| A - \alpha_z(A)\| < \varepsilon$.
    \end{enumerate}
\end{definition}
Of course, the definition is directly taken from \cite[Definition 3.6]{Fulsche_Luef_Werner2024}, with the sole difference that we also work outside of the Hilbert space setting. The set of all compatible families of limit operators will be denoted $\mathfrak{lim}\, \mathcal C_1^p$. There is one main result about compatible families of limit operators, which is the following: 

\begin{theorem}\label{thm:comp_families}~
    \begin{enumerate}[(1)]
        \item Let $B \in \mathcal C_1^p$. Then, $\omega(x) := \alpha_x(B)$, $x\in \partial \Xi$, is a compatible family of limit operators.
        \item If $\omega$ is a compatible family of limit operators, then there exists $B \in \mathcal C_1^p$ such that $\omega(x) = \alpha_x(B)$. $B$ is unique modulo $\mathcal K(\Co_p(U))$. 
        \item Upon endowing $\mathfrak{lim}\, \mathcal C_1^p$ with pointwise addition, multiplication and the norm
        \[\| \omega\| := \sup\limits_{x \in \partial \Xi} \| \omega(x)\|_{op},\]
        it turns into a unital Banach algebra.
        \item The map $\mathcal C_1^p \ni B \mapsto [\omega(x) = \alpha_x(B)] \in \mathfrak{lim}\, \mathcal C_1^p$ is a surjective homomorphism of Banach algebras. Its kernel is $\mathcal K(\Co_p(U))$ and the quotient map $\mathcal C_1^p/\mathcal K(\Co_p(U)) \to \mathfrak{lim}\, \mathcal C_1^p$ is, up to an equivalence of norms, an isomorphism of Banach algebras.
    \end{enumerate}
\end{theorem}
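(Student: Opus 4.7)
The plan is to verify (1) by direct computation, deduce the uniqueness half of (2) from Theorem \ref{thm:compactness_characterization}, and handle the existence half by showing that the natural map $\pi\from \mathcal C_1^p \to \mathfrak{lim}\,\mathcal C_1^p$, $B \mapsto [x \mapsto \alpha_x(B)]$, has closed image (via Theorem \ref{thm:essential_norm}) and dense image (via an approximation argument adapted from \cite[Proposition 3.7]{Fulsche_Luef_Werner2024}). Statement (3) is a routine Banach-algebra check, and (4) is the packaging of the previous results with Theorems \ref{thm:compactness_characterization} and \ref{thm:essential_norm}.

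For (1), each of the four conditions follows from what is already available. Weak$^\ast$-continuity is a consequence of the stronger SOT-continuity in Proposition \ref{prop:strongly_continuous_extension}. Covariance holds on $\Xi$ because $V_zV_y = m(z,y)V_{zy}$ and the cocycle cancels in conjugation, giving $\alpha_z \circ \alpha_y = \alpha_{zy}$; since $\varsigma_{z^{-1}}$ restricts on $\Xi$ to left translation by $z$, the identity extends to $\partial \Xi$ by SOT-continuity of $\alpha_\cdot(B)$. Boundedness is Lemma \ref{lem:limit_operator_properties}(ii). Uniform equicontinuity follows from $\|\alpha_z(\alpha_y(B)) - \alpha_y(B)\| = \|\alpha_y(\alpha_z(B) - B)\| \leq \|\alpha_z(B) - B\|$ for $y \in \Xi$ (using that $\alpha_y$ is isometric), extended to $x \in \partial \Xi$ by lower semicontinuity of the operator norm under SOT-limits along a net $g_\lambda \to x$.

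For (2), uniqueness is immediate from Theorem \ref{thm:compactness_characterization}. For existence, Theorem \ref{thm:essential_norm} gives the two-sided estimate $\|\pi(B)\| \leq \|B + \mathcal K\| \leq \|P_{\varphi_0}\| \|\pi(B)\|$, so $\bar\pi\from \mathcal C_1^p/\mathcal K(\Co_p(U)) \to \mathfrak{lim}\,\mathcal C_1^p$ is a topological embedding with closed image. The essential task, and the main obstacle, is to prove that this image is dense. The plan is: given $\omega \in \mathfrak{lim}\,\mathcal C_1^p$ and $\varepsilon > 0$, use property (2) and uniform equicontinuity to find $\eta \in L^1(\Xi)$ supported in a small compact neighborhood of $e$ with $\int \eta = 1$ such that $\|\tilde\omega - \omega\|_\infty < \varepsilon$, where $\tilde\omega(x) := \int \eta(z) \alpha_z(\omega(x)) \, dz = \int \eta(z) \omega(\varsigma_{z^{-1}}(x)) \, dz$; then identify $\tilde\omega$ as $\pi(B_\eta)$ for some explicitly constructed $B_\eta \in \mathcal C_1^p$. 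The construction of $B_\eta$ uses the density $\mathcal N_0 \ast \mathcal N_0 \subseteq L^1(\Xi)$ together with the essentiality of $\mathcal C_1^p$ as an $L^1$-module, mirroring the argument of \cite[Proposition 3.7]{Fulsche_Luef_Werner2024} while taking care that none of the steps require second-countability of $\Xi$.

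For (3), pointwise operations preserve each of the four defining properties of a compatible family (for sup-norm limits, uniform equicontinuity and boundedness are preserved, and weak$^\ast$-continuity is preserved under uniform convergence on the compact space $\partial\Xi$). Submultiplicativity and completeness of the sup norm are clear, and the constant family $x \mapsto I$ is the unit. For (4), the map $\pi$ is multiplicative because $\alpha_x$ respects products: on $\Xi$ this is literally conjugation, and on $\partial \Xi$ it extends via SOT-continuity combined with joint continuity of multiplication on norm-bounded sets in the strong operator topology. Surjectivity and the kernel identification are (2) together with Theorem \ref{thm:compactness_characterization}, and the equivalence of norms on the induced isomorphism is exactly Theorem \ref{thm:essential_norm}.
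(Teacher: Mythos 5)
Your proposal is correct and follows essentially the same route as the paper, which itself proves the theorem by transplanting the argument of \cite[Theorem 3.7]{Fulsche_Luef_Werner2024} with Theorem \ref{thm:essential_norm} substituted for the $C^*$-algebraic identity $\|A+\mathcal K\|=\sup_x\|\alpha_x(A)\|$ --- exactly the substitution you identify when you use the two-sided estimate to get a topological embedding with closed image. The only part you leave at the level of a plan (the explicit construction of $B_\eta$ realizing the mollified family, via extending the scalar functions $x\mapsto\tr(M\beta_-(\omega(x)))$ from $\partial\Xi$ to $\sigma\Xi$ and convolving) is precisely the part the paper also delegates to \cite{Fulsche_Luef_Werner2024}, so no genuine gap relative to the paper's own proof.
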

The proof of the theorem is essentially the same as in \cite[Theorem 3.7]{Fulsche_Luef_Werner2024}. Indeed, the only argument used there depending on the Hilbert space structure was the fact that $\| A + \mathcal K\| = \sup\limits_{x \in \partial \Xi} \| \alpha_x(A)\|$, which was in the Hilbert space case deduced from the general theory of $C^\ast$-algebras. Here, we have Theorem \ref{thm:essential_norm} as a substitute, which is sufficient to now follow the same proof.

We have the following consequence of Theorem \ref{thm:comp_families}:
\begin{corollary} \label{cor:comp_families}
    Let $A \in \mathcal C_1^p$. Then, $A$ is Fredholm if and only if $\alpha_x(A)$ is invertible for each $x \in \partial \Xi$ and $\sup\limits_{x \in \partial \Xi} \| \alpha_x(A)^{-1}\| < \infty$.
\end{corollary}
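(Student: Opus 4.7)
The plan is to deduce both directions from Theorem \ref{thm:comp_families} together with the compactness characterization Theorem \ref{thm:compactness_characterization}. In the Hilbert space case a $C^\ast$-argument gives inverse closedness of $\mathcal C_1^2/\mathcal K$ in the Calkin algebra, and then invertibility in $\mathfrak{lim}\, \mathcal C_1^2$ reduces to pointwise invertibility with a uniform bound. In the Banach setting we cannot invoke that shortcut, so I would handle both directions directly by working with parametrices and with the notion of a compatible family of limit operators.

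For the $(\Rightarrow)$ direction, I would fix a parametrix $B_0 \in \mathcal L(\Co_p(U))$ for $A$, i.e., $AB_0 - I, B_0A - I \in \mathcal K(\Co_p(U))$. Even though $B_0$ need not lie in $\mathcal C_1^p$, the assignment $x \mapsto \alpha_x(B_0)$ extends to a weak$^\ast$-continuous $\mathcal L(\Co_p(U))$-valued map on $\sigma \Xi$ with $\|\alpha_x(B_0)\| \leq \|B_0\|$ throughout. The key step is to establish the multiplicativity $\alpha_x(AB_0) = \alpha_x(A)\alpha_x(B_0)$ and $\alpha_x(B_0A) = \alpha_x(B_0)\alpha_x(A)$ for every $x \in \partial \Xi$. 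I would prove this by testing against the nuclear predual: for $N \in \mathcal N(\Co_p(U))$ and a net $y_\gamma \to x$ in $\Xi$, one has $\alpha_{y_\gamma}(A)N \to \alpha_x(A)N$ and $N\alpha_{y_\gamma}(A) \to N\alpha_x(A)$ in nuclear norm, which follows from Proposition \ref{prop:strongly_continuous_extension} applied on both the $p$- and $q$-sides (i.e.\ using that $A^\ast \in \mathcal C_1^q$ for the dual action on $\Co_q(U)$) together with dominated convergence on a nuclear expansion of $N$. The trace pairing then yields $\tr(N\alpha_x(A)\alpha_x(B_0)) = \tr(N\alpha_x(AB_0))$, and since $AB_0 - I \in \mathcal K$ gives $\alpha_x(AB_0) = I$ by Theorem \ref{thm:compactness_characterization}, one concludes $\alpha_x(A)\alpha_x(B_0) = I$, symmetrically $\alpha_x(B_0)\alpha_x(A) = I$. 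Hence $\alpha_x(A)^{-1} = \alpha_x(B_0)$, uniformly bounded by $\|B_0\|$.

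For the $(\Leftarrow)$ direction, I would define $\omega(x) := \alpha_x(A)^{-1}$ for $x \in \partial \Xi$ and verify the four axioms of a compatible family of limit operators. Boundedness is the hypothesis. Equivariance $\alpha_z(\omega(x)) = \omega(\varsigma_{z^{-1}}(x))$ is immediate from each $\alpha_z$ ($z \in \Xi$) being a Banach-algebra automorphism of $\mathcal L(\Co_p(U))$ and the composition rule $\alpha_z \circ \alpha_x = \alpha_{\varsigma_{z^{-1}}(x)}$. Uniform equicontinuity follows from the resolvent identity
\[\omega(x) - \alpha_z(\omega(x)) = \omega(x)\bigl(\alpha_z(\alpha_x(A)) - \alpha_x(A)\bigr)\alpha_z(\omega(x))\]
combined with $\|\alpha_z(\alpha_x(A)) - \alpha_x(A)\| \leq \|\alpha_z(A) - A\|$, which is valid by commutativity of $\Xi$ and lower semicontinuity of the operator norm under weak$^\ast$-limits, and tends to zero uniformly in $x \in \partial \Xi$ as $z \to e$. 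Weak$^\ast$-continuity is the delicate axiom: given $x_\gamma \to x$ in $\sigma \Xi$ and a weak subnet limit $v$ of $\omega(x_\gamma)f$, I would verify $\alpha_x(A)v = f$ by substituting $\alpha_{x_\gamma}(A)\omega(x_\gamma)f = f$ into $\langle \alpha_x(A)v - f, g\rangle$ and passing to the limit using the bilateral strong continuity of $\alpha_\cdot(A)$ and $\alpha_\cdot(A^\ast)$. With $\omega$ identified as a compatible family, Theorem \ref{thm:comp_families}(2) produces $B \in \mathcal C_1^p$ with $\alpha_x(B) = \alpha_x(A)^{-1}$ on $\partial \Xi$; hence $AB - I$ and $BA - I$ both lie in $\mathcal C_1^p$ and have vanishing limit operators, so Theorem \ref{thm:compactness_characterization} forces them to be compact, making $A$ Fredholm.

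The main technical obstacle in both directions is the same: strong operator convergence does not interact well with multiplication by weak$^\ast$-convergent nets, and the rescue comes from bilateral strong continuity of $\alpha_\cdot$ on $\Co_p(U)$ and $\Co_q(U)$, which is in turn available because $A \in \mathcal C_1^p$ implies $A^\ast \in \mathcal C_1^q$ via the isometric action of $V_x$ on each $L^p$-scale. Once this bilateral continuity is in hand, all the other pieces are routine manipulations within the Banach algebra $\mathcal C_1^p$.
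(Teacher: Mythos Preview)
Your $(\Leftarrow)$ direction is essentially the paper's argument: define $\omega(x):=\alpha_x(A)^{-1}$, verify the four axioms of a compatible family, then invoke Theorem~\ref{thm:comp_families} and Theorem~\ref{thm:compactness_characterization}. The paper is terser --- it just says ``weak$^\ast$ continuity follows from the second resolvent identity'' and ``uniform equicontinuity follows from a Neumann series argument'' --- but unpacking those phrases leads to exactly the bilateral strong-continuity manipulation you sketch.

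Your $(\Rightarrow)$ direction, however, takes a genuinely different route. The paper does \emph{not} work with an arbitrary parametrix $B_0\in\mathcal L(\Co_p(U))$; instead it observes that $\mathcal C_1^p/\mathcal K(\Co_p(U))$ is precisely the subalgebra of the Calkin algebra on which the induced shifts $\alpha_z$ act norm-continuously, and a Neumann series argument then shows this subalgebra is inverse closed. Hence any Fredholm regularizer of $A\in\mathcal C_1^p$ already lies in $\mathcal C_1^p$, and invertibility of each $\alpha_x(A)$ with a uniform bound drops out of the Banach-algebra isomorphism in Theorem~\ref{thm:comp_families}(4). So your remark that ``in the Banach setting we cannot invoke that shortcut'' is not quite right: the inverse-closedness argument works verbatim for all $1<p<\infty$ and is in fact the paper's chosen shortcut. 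Your approach --- proving $\alpha_x(AB_0)=\alpha_x(A)\alpha_x(B_0)$ at boundary points by passing $N\alpha_{y_\gamma}(A)\to N\alpha_x(A)$ in nuclear norm and then using weak$^\ast$ convergence of $\alpha_{y_\gamma}(B_0)$ --- is correct and self-contained, but it is longer and it buys less: the paper's route additionally records the structural fact that $\mathcal C_1^p/\mathcal K$ is spectrally invariant in the Calkin algebra, which your argument bypasses.
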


\begin{proof}
    The proof can be concluded as in the Fock space case, cf.\ \cite[Theorem 6.15]{Fulsche2024}. For completeness, we repeat the argument. If $A$ is Fredholm, the statement follows from the fact that the Fredholm regularizer of an operator in $\mathcal C_1^p$ is again contained in $\mathcal C_1^p$: This is a consequence of the fact that $\mathcal K(\Co_p(U)) \subseteq \mathcal C_1^p$. Since the shifts $\alpha_x$ leave $\mathcal K(\Co_p(U))$ invariant, we can consider the shifts also on the Calkin algebra $\mathcal L(\Co_p(U))/\mathcal K(\Co_p(U))$, and $\mathcal C_1^p/\mathcal K(\Co_p(U))$ can be considered as the subalgebra of the Calkin algebra on which the shifts act uniformly continuous, i.e.:
    \begin{align*}
        \mathcal C_1^p/\mathcal K(\Co_p(U)) = \{ &A + \mathcal K(\Co_p(U)) \in \mathcal L(\Co_p(U))/\mathcal K(\Co_p(U)): \\
        &z \mapsto \alpha_z(A + \mathcal K(\Co_p(U))) \text{ is cont.\ w.r.t.} \| \cdot\|_{\mathcal L/\mathcal K}\}.
    \end{align*}
    Therefore, a standard Neumann series argument shows that Fredholm regularizers of $A \in \mathcal C_1^p$, which can be identified with inverses in the Calkin algebra, are again contained in $\mathcal C_1^p$. 
    
    On the other hand, if $A \in \mathcal C_1^p$ is such that $\sup\limits_{x \in \partial \Xi} \| \alpha_x(A)^{-1}\| < \infty$, then $\omega(x) := \alpha_x(A)^{-1}$ defines a compatible family of limit operators: Weak$^\ast$ continuity of the function follows from the second resolvent identity. (2) follows from the fact that $\alpha_z$ commutes with taking inverses. (3) is clear and the uniform equicontinuity follows from a standard Neumann series argument. Hence, by Theorem \ref{thm:comp_families}(4), there exists some $B \in \mathcal C_1^p$ such that $\alpha_x(B) = \alpha_x(A)^{-1}$ for each $x \in \partial \Xi$. It follows $\alpha_x(AB - I) = 0$ for each $x \in \partial \Xi$, which shows $AB - I \in \mathcal K(\Co_p(U))$. Similarly, $BA - I \in \mathcal K(\Co_p(U))$.
\end{proof}

It now remains to remove the condition on the uniform boundedness of the inverses of the limit operators, that is, to extend the above corollary to the following main result about Fredholmness:

\begin{theorem} \label{thm:Fredholm_main}
    Let $1 < p < \infty$ and $A \in \mathcal C_1^p$. Then $A$ is Fredholm if and only if $\alpha_x(A)$ is invertible for every $x \in \partial \Xi$. In particular, the essential spectrum of $A$ is given by
\[\spec_{\ess}(A) = \bigcup\limits_{x \in \partial\Xi} \spec(\alpha_x(A)).\]
\end{theorem}

To prove this result, we switch our point of view to $\Cc_1^p(\varphi_0)$, which acts on $W^p_{\varphi_0} \subseteq L^p(\Xi)$. For $T \in \Lc(L^p(\Xi))$ and $Y \subseteq \Xi$ we define the lower norm
\[\nu(T) := \inf\set{\norm{Tf}_p : \norm{f}_p = 1}\]
and the localized lower norm as
\[\nu_Y(T) := \inf\set{\norm{Tf}_p : \norm{f}_p = 1, \supp f \subseteq \xi Y \text{ for some } \xi \in \Xi}.\]
We will also need a restricted version, which for convenience we will denote by
\[\nu_Y(T|_L) := \inf\set{\norm{Tf}_p : \norm{f}_p = 1, \supp f \subseteq \xi Y \cap L \text{ for some } \xi \in \Xi}\]
for a Borel subset $L \subseteq \Xi$. Clearly, we always have $\nu_Y(T) \geq \nu(T)$ and $\nu_Y(T|_L) \geq \nu_Y(T)$. Similarly as for the operator norm, a compact set $Y \subseteq \Xi$ can be chosen in such a way that $\nu_Y(T|_L)$ gets arbitrarily close to $\nu(T|_L)$.

\begin{proposition} \label{prop:LNL}
    Every abelian phase space $\Xi$ has the lower norm localization property. That is, for all $K \subseteq \Xi$ compact, $C \geq 0$ and $\epsilon > 0$ there is a compact subset $Y \subseteq \Xi$ such that for all $T \in \BO_K^p(\Xi)$ with $\norm{T} \leq C$ and all Borel subsets $L \subseteq \Xi$ we have $\nu_Y(T|_L) \leq \nu(T|_L) + \epsilon$. Moreover, we may assume that this $Y$ satisfies $Y = Y^{-1}$ and $e \in Y$.
\end{proposition}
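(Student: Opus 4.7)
The plan is to adapt the amenability/Følner-set argument of Proposition~\ref{prop:ONL}, but run ``in the opposite direction'': rather than localizing to reach a large $\|Tf\|_p$, I want to localize a nearly-minimizing vector $f$ so that its $p$-norm is essentially preserved while $\|Tf_\xi\|_p$ does not grow much. First I would harmlessly assume $K=K^{-1}$ and $e\in K$ (see the remark following Definition~\ref{def:BDO_Xi}), and for a small $\epsilon'>0$ (to be chosen at the end in terms of $\epsilon$ and $C$ only) invoke \cite[Theorem 4.13]{Paterson1988} to produce a compact $Y\subseteq\Xi$ with $\lambda(YK^2\Delta Y)/\lambda(Y)<\epsilon'$. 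The conditions $Y=Y^{-1}$ and $e\in Y$ can be arranged by replacing $Y$ with $Y\cup Y^{-1}\cup\{e\}$, which only enlarges the set over which $\nu_Y$ is an infimum, and hence only helps.

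Now fix $T\in\BO_K^p(\Xi)$ with $\|T\|\leq C$ and a Borel set $L\subseteq\Xi$, and argue by contradiction: suppose $\nu_Y(T|_L)>\nu(T|_L)+\epsilon$. Choose a normalized $f\in L^p(\Xi)$ supported in $L$ with $\|Tf\|_p\leq\nu(T|_L)+\delta$ (for a $\delta>0$ to be chosen). Setting $f_\xi:=\mathbf 1_{\xi Y}f$, one has $\supp f_\xi\subseteq \xi Y\cap L$, so the hypothesis gives $\|Tf_\xi\|_p\geq(\nu(T|_L)+\epsilon)\|f_\xi\|_p$ for every $\xi\in\Xi$, and the Tonelli identity from \eqref{eq:ONL1} gives $\int_\Xi\|f_\xi\|_p^p\,d\xi=\lambda(Y)$. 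The key structural observation, using the band-width identity twice together with $K=K^{-1}$, is
\[Tf_\xi \;=\; M_{\mathbf 1_{\xi YK}}Tf \;-\; M_{\mathbf 1_{\xi YK}}TM_{\mathbf 1_{\xi YK^2\setminus\xi Y}}f,\]
because $M_{\mathbf 1_{\xi YK}}TM_{\mathbf 1_{\Xi\setminus\xi YK^2}}=0$ by the band-operator property applied to the complement.

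Next I would apply \emph{Minkowski's integral inequality} in $L^p(\Xi,d\xi)$ (not a naive triangle inequality inside the $p$-th power) together with Tonelli to get
\[\Big(\int_\Xi\|Tf_\xi\|_p^p\,d\xi\Big)^{1/p}\leq \lambda(YK)^{1/p}\|Tf\|_p + C\,\lambda(YK^2\setminus Y)^{1/p}\leq \lambda(YK)^{1/p}(\nu(T|_L)+\delta)+C\,\lambda(YK^2\setminus Y)^{1/p}.\]
Combining this with the contradiction hypothesis lower bound $(\nu(T|_L)+\epsilon)\lambda(Y)^{1/p}\leq(\int\|Tf_\xi\|_p^p\,d\xi)^{1/p}$, dividing by $\lambda(Y)^{1/p}$, and applying the Følner bounds $\lambda(YK)\leq\lambda(YK^2)\leq(1+\epsilon')\lambda(Y)$ and $\lambda(YK^2\setminus Y)\leq\epsilon'\lambda(Y)$, yields
\[\epsilon\leq \big((1+\epsilon')^{1/p}-1\big)\nu(T|_L)+(1+\epsilon')^{1/p}\delta+C(\epsilon')^{1/p}\leq C\big((1+\epsilon')^{1/p}-1\big)+(1+\epsilon')^{1/p}\delta+C(\epsilon')^{1/p},\]
using $\nu(T|_L)\leq\|T\|\leq C$. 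Choosing $\epsilon'$ and $\delta$ sufficiently small depending only on $\epsilon$ and $C$ makes the right-hand side strictly less than $\epsilon$, the desired contradiction.

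The main obstacle I expect is precisely the constant management in the last display: a naive triangle inequality applied inside $\|\cdot\|_p^p$ would introduce a factor $2^{p-1}$ and kill the argument, since the inequality we need is essentially a near-isometric comparison between the full $L^p$-average and the localized one. The fix is to do the triangle inequality at the level of $L^p(d\xi)$ via Minkowski, where the relevant constant is just $1$. Once this is in place, the quantitative dependence on $T$ and $L$ drops out (all estimates depend on $T$ only through $\|T\|\leq C$, and the bound $\nu(T|_L)\leq C$ absorbs the dependence on $L$), so the uniformity claimed in the statement comes for free.
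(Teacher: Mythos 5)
Your proposal is correct and follows essentially the same route as the paper's proof: a F\o lner set from amenability, the Tonelli averaging identity \eqref{eq:ONL1}, a band-width decomposition to control the boundary term $\1_{\xi(YK^2\setminus Y)}f$, and the triangle inequality applied at the level of $L^p(\mathrm{d}\xi)$-norms of the fiber norms (the paper uses the reverse triangle inequality there, which is exactly how it also avoids the $2^{p-1}$ loss you were worried about). Your contradiction framing versus the paper's direct pigeonhole over translates, and your choice of $Y$ as the F\o lner set itself versus the paper's $Y := Y_0K$, are only cosmetic differences.
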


Note that compared to Proposition \ref{prop:ONL}, where a similar result is proven for the operator norm, the approximation error is an additive constant instead of a multiplicative one. This happens essentially because unlike for the operator norm, the lower norm can be zero without the operator itself being zero. Nevertheless, the proof is still pretty much the same as for the operator norm.

\begin{proof}
    As in the proof of Proposition \ref{prop:ONL}, for every $\epsilon > 0$ there is a compact subset $Y_0 \subseteq \Xi$ such that
    \begin{equation} \label{eq:LNL0}
        \frac{\lambda(Y_0K^2 \Delta Y_0)}{\lambda(Y_0)} < \epsilon.
    \end{equation}
    Let $L \subseteq \Xi$ be a Borel set and $f \in L^p(\Xi)$ with $\supp f \subseteq L$. Using \eqref{eq:ONL1}, we obtain
    \[\lambda(Y_0K)^{\frac{1}{p}}\norm{Tf}_p = \left(\int_\Xi \norm{\1_{xY_0K}Tf}_p^p \, \mathrm{d}x\right)^{\frac{1}{p}} = \norm{\norm{\1_{\cdot Y_0K} Tf}_p}_p.\]
    But now we apply the reverse triangular equality to get
    \[\lambda(Y_0K)^{\frac{1}{p}}\norm{Tf}_p \geq \left(\int_\Xi \norm{T(\1_{xY_0K}f)}_p^p \, \mathrm{d}x\right)^{\frac{1}{p}} - \left(\int_\Xi \norm{[M_{\1_{xY_0K}},T]f}_p^p \, \mathrm{d}x\right)^{\frac{1}{p}}.\]
    The second term may again be estimated as
    \[\left(\int_\Xi \norm{[M_{\1_{xY_0K}},T]f}_p^p \, \mathrm{d}x\right)^{\frac{1}{p}} \leq 2\norm{T}\lambda(Y_0K^2 \setminus Y_0)^{\frac{1}{p}}\norm{f}_p.\]
    For $\delta > 0$ choose $f \in L^p(\Xi)$ with $\supp f \subseteq L$ such that $\norm{f}_p = 1$ and $\nu(T|_L) + \delta \geq \norm{Tf}_p$. It follows
    \[(\nu(T|_L) + \delta)\lambda(Y_0K)^{\frac{1}{p}}\geq \left(\int_\Xi \norm{T(\1_{xY_0K}f)}_p^p \, \mathrm{d}x\right)^{\frac{1}{p}} - 2\norm{T}\lambda(Y_0K^2 \setminus Y_0)^{\frac{1}{p}}\]
    or equivalently
    \[\Bigg(\nu(T|_L) + \underbrace{\delta + 2\norm{T}\frac{\lambda(Y_0K^2 \setminus Y_0)^{\frac{1}{p}}}{\lambda(Y_0K)^{\frac{1}{p}}}}_{=:\epsilon'}\Bigg)^p\lambda(Y_0K) \geq \int_\Xi \norm{T(\1_{xY_0K}f)}_p^p \, \mathrm{d}x.\]
    Now we apply \eqref{eq:ONL1} again and since $\norm{f}_p = 1$, we obtain
    \[(\nu(T|_L) + \epsilon')^p\int_\Xi \norm{\1_{xY_0K}f}_p^p \, \mathrm{d}x \geq \int_\Xi \norm{T(\1_{xY_0K}f)}_p^p \, \mathrm{d}x.\]
    In particular, there must be some $x \in \Xi$ such that $\1_{xY_0K}f \neq 0$ and
    \begin{equation} \label{eq:LNL3}
        (\nu(T|_L) + \epsilon')^p\norm{\1_{xY_0K}f}_p^p \geq \norm{T(\1_{xY_0K}f)}_p^p.
    \end{equation}
    By \eqref{eq:LNL0} we further know that
    \[\frac{\lambda(Y_0K^2 \setminus Y_0)}{\lambda(Y_0K)} \leq \frac{\lambda(Y_0K^2 \Delta Y_0)}{\lambda(Y_0)} < \epsilon.\]
    Now choose $Y := Y_0K$, which implies $\supp(\1_{xY_0K}f) \subseteq xY \cap L$. As $\delta$ and $\epsilon$ can be chosen arbitrarily small, \eqref{eq:LNL3} implies the proposition.

    That we may choose $Y$ such that $Y = Y^{-1}$ and $e \in Y$ is clear by definition of $\nu_Y$.
\end{proof}

As for the norm in Corollary \ref{cor:ONL}, we will need a certain uniformity in the approximation of the lower norm.

\begin{corollary} \label{cor:LNL}
    Let $\epsilon > 0$ and assume that $(T_j)_{j \in \Jc} \in \BDO^p(\Xi)$ is a bounded family of band-dominated operators. Further assume that there exists a compact subset $K \subseteq \Xi$ and operators $B_j \in \BO_K^p(\Xi)$ such that $\sup\limits_{j \in \Jc} \norm{T_j-B_j} \leq \frac{\epsilon}{3}$. Then there is a compact subset $Y \subseteq \Xi$ such that for every $j \in \Jc$ and every Borel subset $L \subseteq \Xi$ we have $\nu_Y(T_j|_L) \leq \nu(T_j|_L) + \epsilon$. As in Proposition \ref{prop:ONL}, $Y$ can be chosen such that $Y = Y^{-1}$ and $e \in Y$.
\end{corollary}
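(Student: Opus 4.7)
The plan is to reduce the corollary to Proposition \ref{prop:LNL} applied to the family $(B_j)_{j \in \Jc}$ by exploiting the fact that both the lower norm $\nu(\cdot|_L)$ and the localized version $\nu_Y(\cdot|_L)$ are $1$-Lipschitz with respect to the operator norm.

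First I would record the Lipschitz estimate. For any $S, T \in \Lc(L^p(\Xi))$ and any Borel $L \subseteq \Xi$, any admissible $f$ (i.e.\ with $\norm{f}_p = 1$ and $\supp f \subseteq L$, respectively $\supp f \subseteq \xi Y \cap L$) satisfies
\[\norm{Tf}_p \leq \norm{Sf}_p + \norm{T-S},\]
and taking infima over $f$ on both sides (the admissible set being identical on both sides) gives
\[\nu(T|_L) \leq \nu(S|_L) + \norm{T-S}, \qquad \nu_Y(T|_L) \leq \nu_Y(S|_L) + \norm{T-S},\]
and the same inequalities with the roles of $S$ and $T$ swapped.

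Next, set $C := \sup_{j \in \Jc} \norm{B_j} \leq \sup_{j \in \Jc} \norm{T_j} + \frac{\epsilon}{3} < \infty$. Applying Proposition \ref{prop:LNL} to the compact set $K$, the constant $C$, and the error $\frac{\epsilon}{3}$, I obtain a compact subset $Y \subseteq \Xi$ with $Y = Y^{-1}$ and $e \in Y$ such that
\[\nu_Y(B_j|_L) \leq \nu(B_j|_L) + \tfrac{\epsilon}{3}\]
holds for every $j \in \Jc$ and every Borel $L \subseteq \Xi$, since each $B_j$ lies in $\BO_K^p(\Xi)$ and satisfies $\norm{B_j} \leq C$.

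Finally I chain the three estimates: for all $j \in \Jc$ and all Borel $L \subseteq \Xi$,
\[\nu_Y(T_j|_L) \leq \nu_Y(B_j|_L) + \norm{T_j - B_j} \leq \nu(B_j|_L) + \tfrac{\epsilon}{3} + \tfrac{\epsilon}{3} \leq \nu(T_j|_L) + \norm{T_j - B_j} + \tfrac{2\epsilon}{3} \leq \nu(T_j|_L) + \epsilon,\]
which is the claim, with the additional structural properties of $Y$ inherited from Proposition \ref{prop:LNL}. There is no real obstacle here: once the Lipschitz continuity of $\nu$ and $\nu_Y$ in the operator norm is observed, the corollary is a direct perturbation of Proposition \ref{prop:LNL}, exactly in analogy to how Corollary \ref{cor:ONL} is deduced from Proposition \ref{prop:ONL}.
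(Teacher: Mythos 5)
Your proof is correct and follows essentially the same route as the paper: apply Proposition \ref{prop:LNL} to the band operators $B_j$ (with the uniform bound $C$) and transfer the estimate to the $T_j$ via the $1$-Lipschitz dependence of $\nu(\cdot|_L)$ and $\nu_Y(\cdot|_L)$ on the operator norm. The paper cites this Lipschitz property from the literature rather than deriving it, but the chain of inequalities is identical.
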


\begin{proof}
    By Proposition \ref{prop:LNL}, we can find a compact subset $Y \subseteq \Xi$ such that for every $j \in \Jc$ and every Borel subset $L \subseteq \Xi$ the inequality $\nu_Y(B_j|_L) \leq \nu(B_j|_L) + \frac{\epsilon}{3}$ is satisfied. Moreover, it is easy to see that
    \[\abs{\nu_Y(T_j|_L) - \nu_Y(B_j|_L)} \leq \norm{T_j - B_j} \leq \tfrac{\epsilon}{3}\]
    (see e.g. \cite[Lemma 2.38]{Lindner2006}). It follows
    \[\nu_Y(T_j|_L) \leq \nu_Y(B_j|_L) + \tfrac{\epsilon}{3} \leq \nu(B_j|_L) + \tfrac{2\epsilon}{3} \leq \nu(T_j|_L) + \epsilon\]
    for every $j \in \Jc$.
\end{proof}

For $T \in \Cc_1^p(\varphi_0)$ we define its extension to $L^p(\Xi)$ by $\hat{T} := TP_{\varphi_0} + (I - P_{\varphi_0})$. As $P_{\varphi_0}$ commutes with $V_x$ by Lemma \ref{lem:commutation_properties_V_z}, we have $\alpha_x(\hat{T}) = \alpha_x(T)P_{\varphi_0} + (I - P_{\varphi_0})$ for all $x \in \sigma\Xi$.

\begin{lemma} \label{lem:lower_norm_minimum}
    Let $T \in \Cc_1^p(\varphi_0)$. Then there is an $x \in \partial\Xi$ such that
    \[\nu(\alpha_x(\hat{T})) = \inf\set{\nu(\alpha_z(\hat{T})) : z \in \partial\Xi}.\]
\end{lemma}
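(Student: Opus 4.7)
The strategy is to follow the blueprint of Theorem \ref{thm:norm_max}, replacing the operator-norm localization of Corollary \ref{cor:ONL} by the lower-norm localization of Corollary \ref{cor:LNL}. The key structural fact is that $\hat{T} = T P_{\varphi_0} + (I - P_{\varphi_0})$ lies in $\BDO^p(\Xi)$ by Theorem \ref{thm:c1_in_bdo} and Corollary \ref{cor:P_BDO}; consequently, the family $(\alpha_x(\hat{T}))_{x \in \sigma\Xi}$ is uniformly approximable by band operators of a fixed band-width, which is precisely the hypothesis needed to invoke Corollary \ref{cor:LNL}.

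Set $\mu := \inf\set{\nu(\alpha_z(\hat{T})) : z \in \partial\Xi}$ and pick a sequence $(x_k) \subseteq \partial\Xi$ with $\nu(\alpha_{x_k}(\hat{T})) \to \mu$. Applying Corollary \ref{cor:LNL} with $\epsilon = 2^{-k}$ yields compact symmetric sets $Y_k$ with $e \in Y_k$, which we may additionally arrange to satisfy $\prod_{l=0}^{k} Y_l^2 \subseteq Y_{k+1}$. Mimicking the recursion in Theorem \ref{thm:norm_max}, I would pick a unit vector $\tilde{f}_{k,0}$ supported in $\xi_{k,0} Y_k$ with $\norm{\alpha_{x_k}(\hat{T}) \tilde{f}_{k,0}}_p \leq \nu(\alpha_{x_k}(\hat{T})) + 2^{-k+1}$, and then for $j = 1, \ldots, k$ recursively pick unit vectors $\tilde{f}_{k,j}$ supported in $\xi_{k,j} Y_{k-j} \cap Y_{k-j+1}$ satisfying
\[\norm{\alpha_{z_{k,j-1}}(\hat{T}) \tilde{f}_{k,j}}_p \leq \nu_{Y_{k-j}}\bigl(\alpha_{z_{k,j-1}}(\hat{T})|_{Y_{k-j+1}}\bigr) + 2^{-(k-j)} \leq \norm{\alpha_{z_{k,j-1}}(\hat{T}) f_{k,j-1}}_p + 2^{-(k-j)+1},\]
where $f_{k,j} := V_{\xi_{k,j}^{-1}} \tilde{f}_{k,j}$ is supported in $Y_{k-j}$ and $z_{k,j} := \xi_{k,j}^{-1} z_{k,j-1}$. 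Iterating to $j = k - m$ and summing the geometric series $\sum_{i=m}^{k-1} 2^{-i+1} \leq 2^{-m+2}$ yields, for fixed $m$ and $k > m$,
\[\norm{\alpha_{z_{k,k-m}}(\hat{T}) f_{k,k-m}}_p \leq \nu(\alpha_{x_k}(\hat{T})) + 2^{-m+2} + 2^{-k+1},\]
with $\supp f_{k,k-m} \subseteq Y_m$.

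Setting $w_k := z_{k,k}$ and $\eta_k := \xi_{k,k-m+1} \cdots \xi_{k,k}$, the abelianness of $\Xi$ gives $z_{k,k-m} = \eta_k w_k$, and the support condition forces $\xi_{k,l}^{-1} \in Y_{k-l} Y_{k-l+1}$, so $\eta_k^{-1} Y_m \subseteq \prod_{l=0}^{m-1} Y_l Y_{l+1} \cdot Y_m \subseteq Y_{m+1}$. Hence $V_{\eta_k}^{-1} f_{k,k-m}$ is a unit vector supported in $Y_{m+1}$, and the identity $\alpha_{z_{k,k-m}}(\hat{T}) = V_{\eta_k} \alpha_{w_k}(\hat{T}) V_{\eta_k}^{-1}$ (Lemma \ref{lem:commutation_properties_V_z}) gives $\nu(\alpha_{w_k}(\hat{T})|_{Y_{m+1}}) \leq \norm{\alpha_{z_{k,k-m}}(\hat{T}) f_{k,k-m}}_p$. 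Extracting a subnet $w_{k_\gamma} \to w \in \partial\Xi$ and combining compactness of $P_{\varphi_0} M_{\1_{Y_{m+1}}}$ (Corollary \ref{cor:multiplication_compact}) with strong continuity of $x \mapsto \alpha_x(T)$ (Proposition \ref{prop:strongly_continuous_extension}) shows that $\alpha_{w_{k_\gamma}}(\hat{T}) M_{\1_{Y_{m+1}}} \to \alpha_w(\hat{T}) M_{\1_{Y_{m+1}}}$ in operator norm. Since the lower norm is $1$-Lipschitz in the sense that $\abs{\nu(A|_L) - \nu(B|_L)} \leq \norm{(A-B) M_{\1_L}}$, this transfers to $\nu(\alpha_{w_{k_\gamma}}(\hat{T})|_{Y_{m+1}}) \to \nu(\alpha_w(\hat{T})|_{Y_{m+1}})$. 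Passing to the limit yields $\nu(\alpha_w(\hat{T})) \leq \nu(\alpha_w(\hat{T})|_{Y_{m+1}}) \leq \mu + 2^{-m+2}$, and letting $m \to \infty$ completes the proof.

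The main obstacle is the error bookkeeping: whereas Theorem \ref{thm:norm_max} \emph{subtracts} errors from a lower bound on an operator norm and can afford the total error to be bounded but not small, here we are \emph{adding} errors to an upper bound on a restricted lower norm, so the cumulative error must genuinely tend to zero. The delicate point is therefore to calibrate the localization-error scale $2^{-(k-j)}$ in the $j$-th recursion step so that the total error $\sum_{i=m}^{k-1} 2^{-i+1} \leq 2^{-m+2}$ over $k-m$ iterations depends only on the target localization level $m$, and not on the total number $k$ of steps taken.
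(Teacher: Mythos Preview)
Your proposal is correct and follows essentially the same route as the paper's proof: the same recursive descent through the nested sets $Y_k$ via Corollary \ref{cor:LNL}, the same telescoping error bound, the same extraction of a limit point $w \in \partial\Xi$ via compactness, and the same passage to the limit using that $P_{\varphi_0}M_{\1_{Y_{m+1}}}$ is compact. The only cosmetic difference is that you track the actual vector norms $\norm{\alpha_{z_{k,j}}(\hat T)f_{k,j}}_p$ through the recursion, whereas the paper tracks the restricted lower norms $\nu(\alpha_{z_{k,j}}(\hat T)|_{Y_{k-j}})$; these are equivalent bookkeeping devices.
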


The proof is quite similar to the proof of Theorem \ref{thm:norm_max}, but the extension is needed because otherwise the lower norm just vanishes if nothing is added to the complement of $W^p_{\varphi_0}$.

\begin{proof}
    Let $(x_n)_{n \in \N}$ be a sequence in $\partial\Xi$ such that
    \begin{equation} \label{eq:nu_max0}
        \lim\limits_{n \to \infty} \nu(\alpha_{x_n}(\hat{T})) = \inf\limits_{x \in \partial \Xi} \nu(\alpha_x(\hat{T})).
    \end{equation}
    By Lemma \ref{lem:BO_limit_ops} and Corollary \ref{cor:LNL}, there is a compact set $Y \subseteq \Xi$ with $e \in Y$ and $Y = Y^{-1}$ such that
    \begin{equation} \label{eq:nu_max1}
        \nu_Y(\alpha_x(\hat{T})|_L) \leq \nu(\alpha_x(\hat{T})|_L) + \epsilon.
    \end{equation}
    for all $x \in \Xi$ and all Borel sets $L \subseteq \Xi$. Applying this to $\epsilon := 2^{-k}$, $k \in \N_0$, we obtain a sequence of compact subsets $Y_k \subseteq \Xi$ with $Y_k = Y_k^{-1}$ and $e \in Y_k$ such that \eqref{eq:nu_max1} holds. As in the proof of Theorem \ref{thm:norm_max}, we may assume that $\prod\limits_{l = 0}^k Y_l^2 \subseteq Y_{k+1}$ for all $k \in \N_0$. For each $k \in \N_0$ we may choose $\tilde{f}_{k,0} \in L^p(\Xi)$ with $\|\tilde{f}_{k,0}\| \leq 1$, $\supp \tilde{f}_{k,0} \subseteq \xi_{k,0}Y_k$ for some $\xi_{k,0} \in \Xi$ such that
    \begin{equation} \label{eq:nu_max2}
        \norm{\alpha_{x_k}(\hat{T})\tilde{f}_{k,0}} \leq \nu(\alpha_{x_k}(\hat{T})) + 2^{-k}.
    \end{equation}    
    Let $f_{k,0} := V_{\xi_{k,0}^{-1}}\tilde{f}_{k,0}$. Then $\supp f_{k,0} \subseteq Y_k$ and
    \[\norm{\alpha_{\xi_{k,0}^{-1}}\big(\alpha_{x_k}(\hat{T})\big)f_{k,0}} \leq \nu(\alpha_{x_k}(\hat{T})) + 2^{-k}.\]
    Let $z_{k,0} := \xi_{k,0}^{-1}x_k$. For $j = 1,\ldots,k$ we recursively choose the following:
    \begin{itemize}
        \item[(i)] $\tilde{f}_{k,j} \in L^p(\Xi)$ with $\|\tilde{f}_{k,j}\| \leq 1$ and $\supp \tilde{f}_{k,j} \subseteq (\xi_{k,j}Y_{k-j}) \cap Y_{k-j+1}$ for some $\xi_{k,j} \in \Xi$ such that
        \begin{equation} \label{eq:nu_max2.5}
            \norm{\alpha_{z_{k,j-1}}(\hat{T})\tilde{f}_{k,j}} \leq \nu(\alpha_{z_{k,j-1}}(\hat{T})|_{Y_{k-j+1}}) + 2^{-(k-j)},
        \end{equation}
        \item[(ii)] $f_{k,j} := V_{\xi_{k,j}^{-1}}\tilde{f}_{k,j}$, implying $\supp f_{k,j} \subseteq Y_{k-j}$,
        \item[(iii)] $z_{k,j} := \xi_{k,j}^{-1}z_{k,j-1}$.
    \end{itemize}
    This shows
    \[\nu(\alpha_{z_{k,j}}(\hat{T})|_{Y_{k-j}}) \leq \norm{\alpha_{z_{k,j}}(\hat{T})f_{k,j}} = \norm{\alpha_{z_{k,j-1}}(\hat{T})\tilde{f}_{k,j}} \leq \nu(\alpha_{z_{k,j-1}}(\hat{T})|_{Y_{k-j+1}}) + 2^{-(k-j)}.\]
    In particular,
    \[\nu(\alpha_{z_{k,k-j}}(\hat{T})|_{Y_j}) \leq \nu(\alpha_{z_{k,k-j-1}}(\hat{T})|_{Y_{j+1}}) + 2^{-j}.\]
    Iterating this estimate and applying \eqref{eq:nu_max2}, we get
    \begin{align*}
        \nu(\alpha_{z_{k,k-j}}(\hat{T})|_{Y_j}) &\leq \nu(\alpha_{z_{k,0}}(\hat{T})|_{Y_k}) + 2^{-j} + \ldots + 2^{-k-1}\\
        &\leq \nu(\alpha_{z_{k,0}}(\hat{T})) + 2^{-j} + \ldots + 2^{-k-1} + 2^{-k}\\
        &\leq \nu(\alpha_{x_k}(\hat{T})) + 2^{-j+1}
    \end{align*}
    for $j < k$. Now consider $w_k := z_{k,k}$ for $k \in \N$. As in the proof of Theorem \ref{thm:norm_max}, it follows
    \begin{equation} \label{eq:nu_max3}
        \nu(\alpha_{w_k}(\hat{T})|_{Y_{j+1}}) \leq \nu(\alpha_{z_{k,k-j}}(\hat{T})|_{Y_j}) \leq \nu(\alpha_{x_k}(\hat{T})) + 2^{-j+1}.
    \end{equation}
    As $\partial\Xi$ is compact, $(w_k)_{k \in \N}$ has a subnet $(w_{k_{\gamma}})$ that converges to some $w \in \partial\Xi$. As $P_{\varphi_0}M_{\1_{Y_{j+1}}}$ is compact by Corollary \ref{cor:multiplication_compact}, we get that $\alpha_{w_{k_{\gamma}}}(T)P_{\varphi_0}M_{\1_{Y_{j+1}}} \to \alpha_w(T)P_{\varphi_0}M_{\1_{Y_{j+1}}}$ in operator norm. This clearly implies $\nu(\alpha_{w_{k_{\gamma}}}(\hat{T})|_{Y_{j+1}}) \to \nu(\alpha_w(\hat{T})|_{Y_{j+1}})$ (see e.g. \cite[Lemma 2.38]{Lindner2006}). Combining this observation with \eqref{eq:nu_max0} and \eqref{eq:nu_max3}, we get
    \begin{align*}
        \nu(\alpha_w(\hat{T})) &\leq \nu(\alpha_w(\hat{T})|_{Y_{j+1}}) = \lim\limits_{\gamma} \nu(\alpha_{w_{k_{\gamma}}}(\hat{T})|_{Y_{j+1}}) \leq \lim\limits_{\gamma} \nu(\alpha_{x_{k_{\gamma}}}(\hat{T})) + 2^{-j+1}\\
        &= \sup\limits_{x \in \partial \Xi} \nu(\alpha_x(\hat{T})) + 2^{-j+1}.
    \end{align*}
    As this is true for every $j \in \N$, it follows that
    \[\nu(\alpha_w(T)) = \inf\limits_{x \in \partial \Xi} \nu(\alpha_x(T)).\qedhere\]
\end{proof}

Our main result now follows by combining the above preparations.

\begin{proof}[Proof of Theorem \ref{thm:Fredholm_main}]
    In view of Corollary \ref{cor:comp_families}, it suffices to prove that if $\alpha_x(T)$ is invertible for each $x \in \partial \Xi$, then $\sup\limits_{x \in \partial \Xi} \| \alpha_x(T)^{-1}\| < \infty$. Now note that $\nu(T) = \norm{T^{-1}}^{-1}$ if $T$ is invertible. Moreover, if $T$ is invertible, then so is $\hat{T}$. So assuming that $\alpha_x(T)$ is invertible for each $x \in \partial \Xi$, we get that $\nu(\alpha_x(\hat{T})) > 0$ for every $x \in \partial\Xi$. Lemma \ref{lem:lower_norm_minimum} thus implies that $\inf\limits_{x \in \partial\Xi} \nu(\alpha_x(\hat{T})) > 0$ and hence $\sup\limits_{x \in \partial \Xi} \| \alpha_x(\hat{T})^{-1}\| < \infty$. As
    \[\alpha_x(\hat{T})^{-1} = \alpha_x(T)^{-1}P_0 + (I - P_0),\]
    this also shows $\sup\limits_{x \in \partial \Xi} \| \alpha_x(T)^{-1}\| < \infty$.
\end{proof}

\bibliographystyle{abbrv}
\bibliography{main}

\vspace{1cm}

\newpage
\begin{multicols}{2}

\noindent
Robert Fulsche\\
\href{fulsche@math.uni-hannover.de}{\Letter ~fulsche@math.uni-hannover.de}
\\
\noindent
Institut f\"{u}r Analysis\\
Leibniz Universit\"at Hannover\\
Welfengarten 1\\
30167 Hannover\\
GERMANY\\

\noindent
Raffael Hagger\\
\href{hagger@math.uni-kiel.de}{\Letter ~hagger@math.uni-kiel.de}
\\
\noindent
Mathematisches Seminar\\
Christian-Albrechts-Universität zu Kiel\\
Heinrich-Hecht-Platz 6\\
24118 Kiel\\
GERMANY

\end{multicols}
\end{document}